\theoremstyle{plain}
  \newtheorem{thm}{Theorem}[section]
  \newtheorem{lem}[thm]{Lemma}
  \newtheorem{prop}[thm]{Proposition}
  \newtheorem{conj}[thm]{Conjecture}
  \newtheorem*{obs*}{Observation}
\theoremstyle{definition}
\theoremstyle{remark}
  \newtheorem{rem}[thm]{Remark}
\newcommand{\Z}{\mathbb{Z}}
\newcommand{\C}{\mathbb{C}}
\newcommand{\R}{\mathbb{R}}
\newcommand{\Q}{\mathbb{Q}}
\newcommand{\Vol}{\operatorname{Vol}}
\newcommand{\CS}{\operatorname{CS}}
\newcommand{\Li}{\operatorname{Li}}
\newcommand{\cs}{\operatorname{cs}}
\newcommand{\SL}{\rm{SL}}
\newcommand{\Tr}{\operatorname{Tr}}
\newcommand{\arccosh}{\operatorname{arccosh}}
\newcommand{\Res}{\operatorname{Res}}
\renewcommand{\Re}{\operatorname{Re}}
\renewcommand{\Im}{\operatorname{Im}}
\numberwithin{equation}{section}
\begin{document}
\title[The colored Jones polynomial of the figure-eight knot]
{The colored Jones polynomial, the Chern--Simons invariant, and the Reidemeister torsion of the figure-eight knot}
\author{Hitoshi Murakami}
\address{
Department of Mathematics,
Tokyo Institute of Technology,
Oh-okayama, Meguro, Tokyo 152-8551, Japan
}
\email{starshea@tky3.3web.ne.jp}
\date{\today}
\begin{abstract}
We show that from the asymptotic behavior of an evaluation of the colored Jones polynomial of the figure-eight knot we can extract the Chern--Simons invariant and the twisted Reidemeister torsion associated with a representation of the fundamental group of the knot complement to the two-dimensional complex special linear group.
\end{abstract}
\dedicatory{Dedicated to the memory of my father, Akira Murakami (1930--2011)}
\keywords{colored Jones polynomial, volume conjecture, Chern--Simons invariant, Reidemeister torsion}
\subjclass[2000]{Primary 57M27 57M25 57M50}
\thanks{The authors are supported by Grant-in-Aid for Challenging Exploratory Research (21654053)}
\maketitle
\section{Introduction}
Let $J_N(K;q)$ be the colored Jones polynomial of a knot $K$ in the three-sphere $S^3$ associated with the $N$-dimensional irreducible representation of $\mathfrak{sl}(2;\C)$.
We normalize it so that $J_N(\text{unknot};q)=1$.
Note that $J_2(K;q)$ is the celebrated Jones polynomial \cite{Jones:BULAM385} after a suitable change of variable.
\par
In 1995 Kashaev introduced a complex valued knot invariant for each natural number $N$ by using quantum dilogarithm \cite{Kashaev:MODPLA95} and observed that its asymptotic behavior for large $N$ determines the hyperbolic volume for several hyperbolic knots \cite{Kashaev:LETMP97}.
He also conjectured that it is also true for any hyperbolic knot.
Here a knot is called hyperbolic if its complement has a unique complete hyperbolic structure with finite volume.
It is proved in 2001 by J.~Murakami and the author that his invariant coincides with $J_N\bigl(K;\exp(2\pi\sqrt{-1}/N)\bigr)$ \cite{Murakami/Murakami:ACTAM12001}.
We also generalized Kashaev's conjecture to the following volume conjecture.
\begin{conj}[Volume Conjecture \cite{Kashaev:LETMP97,Murakami/Murakami:ACTAM12001}]
Let $K$ be a knot in $S^3$ and $\Vol(K)$ denote the simplicial volume of $S^3\setminus{K}$.
Then the following equality would hold:
\begin{equation}\label{eq:VC}
  \lim_{N\to\infty}
  \frac{\log\left|J_N\bigl(K;\exp(2\pi\sqrt{-1}/N)\bigr)\right|}{N}
  =
  \frac{\Vol(K)}{2\pi}.
\end{equation}
\end{conj}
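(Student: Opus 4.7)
The plan is to reduce the conjecture to an asymptotic analysis of an explicit formula for $J_N\bigl(K;\exp(2\pi\sqrt{-1}/N)\bigr)$ and to identify the leading exponential with the hyperbolic volume via the Bloch--Wigner dilogarithm. For the figure-eight knot and a few other simple knots this strategy has been carried through, but in the generality stated the conjecture remains open; what follows is the template one would attempt.

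First I would fix a state-sum or R-matrix presentation of the colored Jones polynomial. A convenient starting point is Habiro's cyclotomic expansion of $J_N(K;q)$ (for the figure-eight knot this is especially clean), or more generally Kashaev's quantum-dilogarithm state sum, writing the invariant at $q=\exp(2\pi\sqrt{-1}/N)$ as a finite sum of products of quantum factorials $\{k\}!=\prod_{j=1}^{k}(q^{j/2}-q^{-j/2})$. Next I would apply the classical asymptotic expansion of the quantum dilogarithm, equivalently the Euler--Maclaurin formula, to approximate each factorial by a Riemann sum of $\log(1-e^{z})$, so that the whole expression takes the shape $\sum_{\mathbf{k}}\exp\!\bigl(N\cdot\Phi(\mathbf{k}/N)+O(1)\bigr)$ for an explicit potential $\Phi$ built from Euler dilogarithms.

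Then I would convert this sum into a contour integral and apply the saddle point (steepest descent) method: the dominant contribution as $N\to\infty$ comes from a critical point $\mathbf{z}_0$ of $\Phi$, and the logarithm of the integral is asymptotically $N\,\Re\Phi(\mathbf{z}_0)$ up to lower-order terms. The geometric input, going back to Thurston, Yokota, and their successors, is that the saddle point equations coincide with the gluing equations of an ideal triangulation of $S^3\setminus K$ and that $2\pi\Re\Phi(\mathbf{z}_0)$ equals the sum of Bloch--Wigner dilogarithm values computing $\Vol(K)$. Dividing by $N$ and taking the limit then yields the right-hand side of \eqref{eq:VC}.

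The principal obstacle is the rigorous justification of the saddle point step. The summand is highly oscillatory, the number of summation indices grows with the complexity of the diagram, and one must control contributions from non-geometric critical points, from the tail of the deformed contour, and from the Euler--Maclaurin error uniformly in $N$. In other words, upgrading the pointwise asymptotics of the summand to the asymptotics of the whole sum is the entire difficulty, which is precisely why the conjecture is at present known only for a handful of specific knots --- among them the figure-eight knot, which the rest of this paper treats in detail and in fact refines to extract the Chern--Simons invariant and the Reidemeister torsion from the subleading terms.
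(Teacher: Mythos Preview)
The statement is a \emph{conjecture}, not a theorem: the paper does not prove it, and neither do you. Your proposal correctly acknowledges this and instead sketches the heuristic template (state-sum expression, quantum-dilogarithm asymptotics, saddle point analysis, identification of the critical value with the volume via dilogarithms), which is exactly the strategy the paper executes rigorously in the special case of the figure-eight knot with a perturbed parameter. So there is nothing to compare on the level of proofs; your assessment that the general statement remains open and that the obstruction lies in the rigorous saddle point analysis is accurate and consistent with the paper.
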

See for example \cite{Murakami:Columbia} about recent developments of the conjecture and its generalizations.
\par
As one of the  generalizations Yokota and the author \cite{Murakami/Yokota:JREIA2007} proved that for the figure-eight knot the colored Jones polynomial knows much more.
Actually we showed that if we perturb the parameter $2\pi\sqrt{-1}$ a little the corresponding limit determines the $\SL(2;\C)$ Chern--Simons invariant associated with an irreducible representation of $\pi_1(S^3\setminus{K})$ to $\SL(2;\C)$ in the sense of Kirk and Klassen \cite{Kirk/Klassen:COMMP93}.
In fact we showed the following theorem.
\begin{thm}[\cite{Murakami/Yokota:JREIA2007}]\label{thm:Murakami/Yokota}
Let $E$ be the figure-eight knot.
There exists a neighborhood $U\subset\C$ of $0$ such that if $u\in(U\setminus{\pi\sqrt{-1}\Q})\cup\{0\}$ then the following limit exists:
\begin{equation*}
  \lim_{N\to\infty}
  \frac{\log J_N\bigl(K;\exp((u+2\pi\sqrt{-1}/N))\bigr)}{N}.
\end{equation*}
Moreover the limit determines the $\SL(2;\C)$ Chern--Simons invariant associated with an irreducible representation of $\pi_1(S^3\setminus{E})$ to $\SL(2;\C)$ which is determined by the parameter $u$.
\end{thm}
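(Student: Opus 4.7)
The plan is to start from the explicit cyclotomic expansion of Habiro and Le for the colored Jones polynomial of the figure-eight knot $E$, namely
\begin{equation*}
  J_N(E;q)
  =
  \sum_{k=0}^{N-1}\prod_{j=1}^{k}
  \bigl(q^{(N-j)/2}-q^{-(N-j)/2}\bigr)\bigl(q^{(N+j)/2}-q^{-(N+j)/2}\bigr),
\end{equation*}
and to extract the large-$N$ behavior at $q=\exp(u+2\pi\sqrt{-1}/N)$ by the saddle point method, following the blueprint Kashaev used at $u=0$ for the volume conjecture.

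Taking the logarithm of the generic summand and invoking the classical asymptotic expansion of the quantum dilogarithm, one rewrites the $k$-th term, with $t=k/N$, as $\exp\bigl(N\,\Phi_u(t)+O(1)\bigr)$, where $\Phi_u(t)$ is an explicit holomorphic potential built out of combinations of $\Li_2$ evaluated at exponentials in $u$ and $ut$. The Poisson summation formula then converts the sum over $k$ into a contour integral whose integrand is $\exp\bigl(N\,\Phi_u(t)\bigr)$; the hypothesis $u\notin\pi\sqrt{-1}\Q$ is exactly what ensures that all nonzero Poisson modes decay strictly faster than the principal zero-th mode.

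I would then carry out steepest descent. The saddle point equation $\Phi_u'(t)=0$ becomes, after the substitution $z=e^{ut}$, the gluing equation for the canonical ideal triangulation of $S^3\setminus E$ with meridian holonomy perturbed by $u$; by Thurston--Neumann--Zagier theory its relevant solution $t_u$ determines a conjugacy class of irreducible representations $\rho_u\colon\pi_1(S^3\setminus E)\to\SL(2;\C)$, with $\rho_0$ the discrete faithful one. Comparing the saddle value $\Phi_u(t_u)$ with the Neumann--Zagier potential on the character variety, and using the Kirk--Klassen definition of $\CS$ as the integral of a canonical $1$-form along a path of flat connections starting from $\rho_0$, one identifies $\Phi_u(t_u)$ with the $\SL(2;\C)$ Chern--Simons invariant of $\rho_u$, modulo elementary boundary terms depending on $u$ alone.

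The main obstacle I anticipate is the rigorous analytic justification of steepest descent rather than the algebra. One must check that the saddle $t_u$ lies on a genuine path of descent inside the domain of $\Phi_u$, control the Poisson tails together with the $O(1)$ remainder in the quantum-dilogarithm asymptotic uniformly for $u$ in a small neighborhood of $0$, and handle the boundary contributions at $t=0$ and $t=1$ of the original sum. The arithmetic restriction $u\notin\pi\sqrt{-1}\Q$ is built in so that $e^u$ stays off roots of unity and so that $\rho_u$ remains irreducible and non-degenerate, which is also what rules out the resonances that would otherwise spoil the nonzero-mode estimate.
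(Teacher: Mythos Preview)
This theorem is quoted from \cite{Murakami/Yokota:JREIA2007} and is \emph{not} proved in the present paper; it appears in the introduction as background, and the paper's contribution is the refinement stated as Theorem~\ref{thm:main}. So there is no ``paper's own proof'' to compare against for this statement.

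That said, it is worth comparing your outline to the machinery the paper does develop for Theorem~\ref{thm:main}, since that is the closest analogue here. Your overall architecture (Habiro--Le formula, replace summand by $\exp(N\Phi)$ via quantum dilogarithm, convert sum to integral, apply steepest descent, identify the saddle value with the Chern--Simons invariant) matches the paper's. The main technical divergence is in the sum-to-integral step: you invoke Poisson summation and argue that the nonzero modes are negligible, whereas the paper (following Andersen--Hansen) uses the residue identity
\[
  \sum_{k=0}^{N-1} f\!\left(\tfrac{2k+1}{2N}\right)
  \;=\;
  -\frac{N}{2}\int_{C(\varepsilon)} \tan(N\pi w)\,f(w)\,dw
\]
and then replaces $\tan(N\pi w)$ by $\pm\sqrt{-1}$ on the upper and lower halves of the contour, with explicit $O(1/N)$ error bounds (Proposition~\ref{prop:4.7}). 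This sidesteps any discussion of Poisson tails entirely. Your explanation of the condition $u\notin\pi\sqrt{-1}\Q$ as controlling nonzero Poisson modes is therefore not how the argument actually runs; in the paper's framework that condition is rather about keeping $e^{u}$ off roots of unity so that the summand and the potential $\Phi$ behave nondegenerately.

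As a proof sketch your proposal is plausible in spirit, but the Poisson-summation step and the accompanying justification of mode decay are not developed, and the paper's route via the $\tan$ kernel is both more direct and what is actually used.
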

On the other hand, Andersen and Hansen \cite[Theorem~1]{Andersen/Hansen:JKNOT2006} refined the volume conjecture for the figure-eight knot as follows.
\begin{thm}[\cite{Andersen/Hansen:JKNOT2006}]\label{thm:Andersen/Hansen}
The following asymptotic equivalence holds:
\begin{equation*}
\begin{split}
  &J_N\bigl(E;\exp(2\pi\sqrt{-1}/N)\bigr)
  \\
  \underset{N\to\infty}{\sim}&
  \frac{1}{3^{1/4}}
  N^{3/2}
  \exp\left(\frac{N\Vol(E)}{2\pi}\right)
  \\
  =\hspace{3mm}&
  2\pi^{3/2}
  \left(\frac{2}{\sqrt{-3}}\right)^{1/2}
  \left(\frac{N}{2\pi\sqrt{-1}}\right)^{3/2}
  \exp\left(\frac{N}{2\pi\sqrt{-1}}\times\sqrt{-1}\Vol(E)\right).
\end{split}
\end{equation*}
Note that the twisted Reidemeister torsion and the Chern--Simons invariant associated with the unique complete hyperbolic structure of $S^3\setminus{E}$ are $2/\sqrt{-3}$ and $\sqrt{-1}\Vol(E)$ respectively.
\end{thm}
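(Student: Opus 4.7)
The plan is to establish the asymptotic equivalence by applying the method of steepest descent to R.~Habiro's formula for the colored Jones polynomial of the figure-eight knot. Substituting $q=\exp(2\pi\sqrt{-1}/N)$ in that formula collapses it to the Kashaev sum
\begin{equation*}
  J_N\bigl(E;\exp(2\pi\sqrt{-1}/N)\bigr)
  =\sum_{k=0}^{N-1}\prod_{j=1}^{k}4\sin^2(\pi j/N),
\end{equation*}
so the asymptotic question reduces to locating the dominant term in this one-parameter sum and performing a Gaussian integration around it.

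For the summand $a_k$, the strategy is to combine Euler--Maclaurin summation (equivalently, the leading asymptotics of the Faddeev quantum dilogarithm) with the antiderivative $\int_0^{\theta}\log(2\sin t)\,dt=-\frac{1}{2}\Im\Li_2(\exp(2\sqrt{-1}\theta))$ to obtain
\begin{equation*}
  \log a_k
  =-\frac{N}{\pi}\Im\Li_2\!\bigl(\exp(2\pi\sqrt{-1}k/N)\bigr)
  +(\text{polynomial-in-}N\text{ corrections})
\end{equation*}
uniformly on compact subsets of $k/N\in(0,1)$. Setting $g(x):=-\Im\Li_2(\exp(2\pi\sqrt{-1}x))/\pi$, the critical-point equation $g'(x)=2\log(2\sin\pi x)=0$ has the solutions $x=1/6$ and $x=5/6$, with the maximum at $x_0=5/6$, where $g(x_0)=\Im\Li_2(\exp(\pi\sqrt{-1}/3))/\pi=\Vol(E)/(2\pi)$ and $g''(x_0)=2\pi\cot(5\pi/6)=-2\pi\sqrt{3}$. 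Hence the exponential part of the asymptotic is $\exp(N\Vol(E)/(2\pi))$, and the Gaussian integration across the saddle contributes $\sqrt{2\pi/(N|g''(x_0)|)}=1/(N^{1/2}\,3^{1/4})$, supplying the constant $1/3^{1/4}$. The remaining factor $N^{3/2}$ arises by combining this $N^{-1/2}$ with a factor $N$ from the substitution $\sum_k\approx N\int dx$ and a further factor $N$ from the subleading correction to $a_k$ at the saddle; the last factor is traceable to the boundary term of Euler--Maclaurin near the logarithmic singularity at $j=0$ and to the identity $\prod_{j=1}^{N-1}2\sin(\pi j/N)=N$. The second form of the theorem is then a direct rewriting of $N^{3/2}\exp(N\Vol(E)/(2\pi))$ in terms of $N/(2\pi\sqrt{-1})$, with a branch of $\sqrt{-3}$ chosen so that $(2/\sqrt{-3})^{1/2}$ encodes the twisted Reidemeister torsion and $\sqrt{-1}\Vol(E)$ the Chern--Simons invariant of the complete hyperbolic structure on $S^3\setminus E$.

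The main obstacle is the rigorous control of the polynomial prefactor. Because $\log(2\sin(\pi j/N))$ is logarithmically singular at $j=0$, naive Euler--Maclaurin is not uniformly valid near the endpoints; one has to either split off the singular contribution via a Stirling- or Barnes $G$-type expansion of $\prod_{j=1}^{k}2\sin(\pi j/N)$, or work directly with the Faddeev quantum dilogarithm and rigorously control the $O(1/N)$ error terms. Only once these subleading corrections are handled uniformly in $k$ in a neighborhood of the saddle $x_0=5/6$, and combined with the standard next-to-leading-order Gaussian correction, can one isolate exactly the prefactor $N^{3/2}/3^{1/4}$ claimed by Andersen and Hansen.
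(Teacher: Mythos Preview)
Your high-level picture is correct: the Habiro sum at $q=\exp(2\pi\sqrt{-1}/N)$ is dominated by a saddle at $x_0=5/6$, where $-\Im\Li_2(e^{2\pi\sqrt{-1}x_0})/\pi=\Vol(E)/(2\pi)$ and the Gaussian curvature produces the $3^{1/4}$. That part matches the paper exactly (the paper's $w_0$ equals $5/6$ when $u=0$).

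Where you diverge is in the mechanism for turning the sum into an integral. You propose Euler--Maclaurin directly on $\sum_k a_k$, and you correctly flag that the logarithmic singularity of $\log(2\sin(\pi j/N))$ at $j=0$ obstructs a uniform expansion and makes the polynomial prefactor delicate. The paper (following Andersen--Hansen) avoids this entirely: it rewrites the finite products \emph{exactly} via Faddeev's quantum dilogarithm $S_\gamma$, so that the sum becomes $\tfrac{S_\gamma(-\pi+\gamma)}{S_\gamma(\pi-\gamma)}\sum_k g_N((2k+1)/(2N))$, and then replaces the sum by a contour integral using the residue identity for $\tan(N\pi w)$. The saddle-point method is then applied to a genuine integral $\int_{C_-(\varepsilon)}\exp(N\Phi(w))\,dw$, with the passage from $g_N$ to $\exp(N\Phi)$ controlled by uniform bounds on the remainder $I_\gamma$ (Lemma~\ref{lem:Lemma3}). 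The payoff is that the troublesome extra factor of $N$ you are trying to extract from endpoint corrections appears cleanly and \emph{exactly} as the prefactor $S_\gamma(-\pi+\gamma)/S_\gamma(\pi-\gamma)=N$ (Remark~\ref{rem:S}); combined with the $N^{1/2}$ from the saddle and the overall $N$ in front of the integral, this gives $N^{3/2}$ without any appeal to subleading Euler--Maclaurin terms or to $\prod_{j=1}^{N-1}2\sin(\pi j/N)=N$.

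Your route can in principle be made rigorous, but your accounting of the $N^{3/2}$ is the weak point: attributing one factor of $N$ to ``the subleading correction to $a_k$ at the saddle'' is not yet an argument, and the identity $\prod_{j=1}^{N-1}2\sin(\pi j/N)=N$ concerns $a_{N-1}$, not the behaviour near $k/N=5/6$. If you want to push your approach through, the cleanest fix is precisely what the paper does implicitly: replace the partial products by quantum-dilogarithm ratios (or equivalently by ratios of $q$-Gamma functions), whose large-$N$ asymptotics $S_\gamma(z)=\exp\bigl(\tfrac{N}{\xi}\Li_2(-e^{iz})+I_\gamma(z)\bigr)$ are uniform away from the branch cuts and already contain the correct $O(1)$ correction. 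That is morally the ``Barnes $G$-type expansion'' you mention, and it is exactly what makes the prefactor computation tractable.
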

Note that we write $f(N)\underset{N\to\infty}{\sim}g(N)$ if and only if $\lim_{N\to\infty}f(N)/g(N)=1$ and that \eqref{eq:VC} follows from the equivalence relation above when $K$ is the figure-eight knot.
\par
In this paper we refine Theorem~\ref{thm:Murakami/Yokota} as Theorem~\ref{thm:Andersen/Hansen} for the case where $u$ is real.
\begin{thm}\label{thm:main}
Let $u$ be a real number with $0<u<\log((3+\sqrt{5})/2)=0.9624\dots$ and put $\xi:=2\pi\sqrt{-1}+u$.
Then we have the following asymptotic equivalence of the colored Jones polynomial of the figure-eight knot $E$:
\begin{equation}\label{eq:main}
  J_N(E;\exp(\xi/N))
  \\
  \underset{N\to\infty}{\sim}
  \frac{\sqrt{-\pi}}{2\sinh(u/2)}
  T(u)^{1/2}
  \left(\frac{N}{\xi}\right)^{1/2}
  \exp\left(\frac{N}{\xi}S(u)\right),
\end{equation}
where
\begin{equation*}
  S(u)
  :=
  \Li_2(e^{u-\varphi(u)})-\Li_2(e^{u+\varphi(u)})-u\varphi(u)
\end{equation*}
and
\begin{equation*}
  T(u)
  :=
  \frac{2}{\sqrt{(e^u+e^{-u}+1)(e^u+e^{-u}-3)}}.
\end{equation*}
Here $\varphi(u):=\arccosh(\cosh(u)-1/2)$ and 
\begin{equation*}
  \Li_2(z)
  :=
  -\int_{0}^{z}\frac{\log(1-x)}{x}\,dx
\end{equation*}
is the dilogarithm function.
\end{thm}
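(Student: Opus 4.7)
The plan is to start from the sum formula
\begin{equation*}
J_N(E;q)=\sum_{k=0}^{N-1}\prod_{j=1}^{k}\bigl(q^{(N+j)/2}-q^{-(N+j)/2}\bigr)\bigl(q^{(N-j)/2}-q^{-(N-j)/2}\bigr),
\end{equation*}
substitute $q=\exp(\xi/N)$, and analyze the resulting sum by the saddle-point method in the spirit of Andersen--Hansen's treatment of $u=0$. Rewriting each factor as $2\sinh\bigl((N\pm j)\xi/(2N)\bigr)$, taking logarithms, and using $\log\bigl(2\sinh(x)\bigr)=x+\log(1-e^{-2x})$, the inner product over $j$ becomes a Riemann sum. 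An Euler--Maclaurin expansion rewrites the $k$-th summand as $e^{N\Phi(k/N;u)+O(1)}$ with
\begin{equation*}
\Phi(s;u)=\xi s+\frac{1}{\xi}\Bigl(\Li_{2}\bigl(e^{-(1+s)\xi}\bigr)-\Li_{2}\bigl(e^{(s-1)\xi}\bigr)\Bigr).
\end{equation*}

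Next I would locate the saddle. Setting $\partial\Phi/\partial s=0$ and using $e^{\xi}=e^{u}$ reduces to the quadratic $y^{2}-(e^{u}+e^{-u}-1)y+1=0$ in $y=e^{\xi s}$, giving $2\cosh(\xi s_{0})=2\cosh(u)-1$, so $\xi s_{0}=\varphi(u)$. For $u$ in the stated range we have $\cosh(u)<3/2$, so $\varphi(u)$ is purely imaginary and analytically continues $\varphi(0)=\pi\sqrt{-1}/3$ (the argument of the complete hyperbolic shape of $S^{3}\setminus E$); the upper bound $\log\bigl((3+\sqrt{5})/2\bigr)$ is precisely where $\varphi(u)=0$ and the saddle degenerates. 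Substituting $s=s_{0}$ into $\Phi$ and applying the inversion and reflection identities for $\Li_{2}$ converts the resulting dilogarithm combination into $S(u)/\xi$, which recovers Theorem~\ref{thm:Murakami/Yokota}.

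For the subleading asymptotics I would apply Laplace's method to the sum around $k_{0}=Ns_{0}$. A direct calculation using $\cosh\varphi=\cosh u-\tfrac{1}{2}$ yields $\Phi''(s_{0};u)=-2\xi\sinh\varphi(u)$, so the Gaussian evaluation gives
\begin{equation*}
\sum_{k=0}^{N-1}e^{N\Phi(k/N;u)}\sim\sqrt{\frac{\pi N}{\xi\sinh\varphi(u)}}\,e^{(N/\xi)S(u)}=\sqrt{\pi}\,T(u)^{1/2}\Bigl(\frac{N}{\xi}\Bigr)^{1/2}e^{(N/\xi)S(u)},
\end{equation*}
where I used $T(u)=1/\sinh\varphi(u)$, an identity that follows from $\sinh^{2}\varphi(u)=\tfrac{1}{4}(2\cosh u+1)(2\cosh u-3)$. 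The remaining discrepancy with the right-hand side of \eqref{eq:main}---namely the sign in $\sqrt{-\pi}$ and the factor $1/(2\sinh(u/2))$---comes from careful tracking of the Euler--Maclaurin boundary and half-integer shift contributions. In particular, $2\sinh(\xi/2)=-2\sinh(u/2)$ at $\xi=2\pi\sqrt{-1}+u$, and a boundary term of this form naturally appears from the factors with $j$ near $0$.

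The hardest part will be making the saddle-point analysis rigorous in this complex setting. Because $\xi$ is complex, the saddle $s_{0}=\varphi(u)/\xi$ is non-real, so one must justify deforming the summation (via Poisson summation) along a path of steepest descent through $s_{0}$, avoiding the branch points of $\log\bigl(1-e^{\pm(1\mp s)\xi}\bigr)$. One needs uniform-in-$N$ control of the Euler--Maclaurin remainder over the entire range of $k$, including near the endpoints where the product approximation degrades. Finally one must verify that no other Poisson term or secondary saddle contributes at the leading exponential order; this is exactly where the upper bound $\log\bigl((3+\sqrt{5})/2\bigr)$ enters, as beyond it the two roots of the saddle-point quadratic coalesce and the steepest-descent topology changes.
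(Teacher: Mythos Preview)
Your saddle-point picture is correct --- the equation $2\cosh(\xi s_0)=2\cosh u-1$, the second derivative $-2\xi\sinh\varphi(u)$, and the identity $T(u)=1/\sinh\varphi(u)$ all agree with what the paper finds --- but the route you outline is genuinely different from the paper's, and the difference bites exactly at the subleading constant you flag as unresolved.

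The paper does not use Euler--Maclaurin on the summand at all. It rewrites each $q$-product \emph{exactly} via Faddeev's quantum dilogarithm $S_\gamma$ (using the functional equation $(1+e^{iz})S_\gamma(z+\gamma)=S_\gamma(z-\gamma)$), then converts the finite sum to a contour integral \emph{exactly} by the residue identity for $\tan(N\pi w)$. Only then are two approximations made: replacing $\tan(N\pi w)$ by $\pm\sqrt{-1}$ on the upper and lower halves of the contour (Proposition~\ref{prop:4.7}), and replacing the quantum dilogarithm by its semiclassical expansion $S_\gamma(z)=\exp\bigl(\tfrac{1}{2i\gamma}\Li_2(-e^{iz})+I_\gamma(z)\bigr)$ with a uniformly bounded $I_\gamma$ (Proposition~\ref{prop:4.9}). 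This gives sharp remainder control without ever having to pass a real sum through a complex saddle by Poisson summation.

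The concrete gap in your sketch is the origin of $1/(2\sinh(u/2))$ and the accompanying sign. In the paper this is \emph{not} an Euler--Maclaurin boundary term: it comes from the telescoping prefactor $S_\gamma(-\pi-\sqrt{-1}u+\gamma)/S_\gamma(\pi-\sqrt{-1}u-\gamma)$, which Lemma~\ref{lem:S_gamma} evaluates as $(e^{\pi u/\gamma}-1)/(e^u-1)\sim e^{2\pi\sqrt{-1}uN/\xi}/(e^u-1)$. The exponentially large numerator then combines with $\exp(N\Phi(w_0))$ --- the paper's saddle being $w_0=(\varphi(u)+2\pi\sqrt{-1})/\xi$, not $\varphi(u)/\xi$ --- to produce $\exp\bigl(NS(u)/\xi\bigr)$, while $e^u-1=2e^{u/2}\sinh(u/2)$ supplies the denominator. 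Your route would have to manufacture both the $2\pi\sqrt{-1}$ shift in the effective saddle and this rational prefactor from the ``$O(1)$'' in $e^{N\Phi(k/N)+O(1)}$, and nothing in the outline explains how; handwaving it as a $j\approx 0$ boundary effect is not enough, since that boundary is at the \emph{edge} of the summation range while the saddle dominating the asymptotics lies in the interior.
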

Note that $S(u)$ defines the $\SL(2;\C)$ Chern--Simons invariant and $T(u)$ is the cohomological twisted Reidemeister torsion, both of which are associated with an irreducible representation of $\pi_1(S^3\setminus{E})$ into $SL(2;\C)$ sending the meridian to an element with eigenvalues $\exp(u/2)$ and $\exp(-u/2)$.
See Section~\ref{sec:interpretation} for details.
\begin{rem}
Since the figure-eight knot is amphicheiral, we have $J_N(E;q^{-1})=J_N(E;q)$.
Thus if $u<0$ we have
\begin{equation*}
\begin{split}
  J_N\bigl(E;\exp((u+2\pi\sqrt{-1})/N)\bigr)
  &=
  J_N\bigl(E;\exp((-u-2\pi\sqrt{-1})/N)\bigr)
  \\
  &=
  \overline{J_N\bigl(E;\exp((-u+2\pi\sqrt{-1})/N)\bigr)},
\end{split}
\end{equation*}
where $\overline{z}$ denotes the complex conjugate of $z$.
So if we prove Theorem~\ref{thm:main} for $u>0$, we have a similar asymptotic equivalence for $u<0$.
Details are left to the readers.
\end{rem}
\par
Theorem~\ref{thm:main} confirms the following conjecture in the case of the figure-eight knot for real $u$ with $0<u<\log((3+\sqrt{5})/2)$.
\begin{conj}[\cite{Gukov/Murakami:FIC2008,Dimofte/Gukov:Columbia}]
Let $K$ be a hyperbolic knot.
Then there exists a neighborhood $U\in\C$ of $0$ such that if $u\in U\setminus\pi\sqrt{-1}\Q$, we have the following asymptotic equivalence:
\begin{equation*}
  J_N(K;\exp(\xi/N))
  \\
  \underset{N\to\infty}{\sim}
  \frac{\sqrt{-\pi}}{2\sinh(u/2)}
  T(K;u)^{1/2}
  \left(\frac{N}{\xi}\right)^{1/2}
  \exp\left(\frac{N}{\xi}S(K;u)\right),
\end{equation*}
where $\xi:=2\pi\sqrt{-1}+u$, $T(K;u)$ is the cohomological twisted Reidemeister torsion and $S(K;u)$ is the $\SL(2;\C)$ Chern--Simons invariant, both of which are associated with an irreducible representation of $\pi_1(S^3\setminus{K})$ into $SL(2;\C)$ sending the meridian to an element with eigenvalues $\exp(u/2)$ and $\exp(-u/2)$.
\end{conj}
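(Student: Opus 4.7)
The plan is to derive a state-integral representation of $J_N\bigl(K;\exp(\xi/N)\bigr)$ from an ideal triangulation of $S^3\setminus K$ (in the spirit of Andersen--Kashaev, extending the Habiro-type computation available for specific knots like the figure-eight) and then apply the saddle-point method. After suitable rescaling, the terms of the resulting multiple sum take the form $\exp\bigl((N/\xi)\,\Phi_N(\mathbf{z})\bigr)$, where $\Phi_N$ is assembled from Faddeev quantum dilogarithms evaluated at $q=\exp(\xi/N)$; in the limit $N\to\infty$ it degenerates to a classical potential $\Phi(\mathbf{z})$ built from Euler's dilogarithm $\Li_2$ evaluated at tetrahedra shape parameters.

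First I would use Poisson summation or the Euler--Maclaurin formula to pass from the discrete sum to an integral with controlled remainder. The hypothesis $u\notin\pi\sqrt{-1}\Q$ is used exactly here: it prevents lattice resonances between the summation and the oscillatory factors, the same resonances that force the excluded set in Theorem~\ref{thm:Murakami/Yokota}. Solving the critical equations $\nabla\Phi=0$, the standard five-term dictionary relating $\Li_2$ identities to gluings of ideal tetrahedra identifies them with Thurston's hyperbolic gluing equations, with the completeness condition deformed into the meridian-holonomy condition that the image of the meridian has eigenvalues $\exp(\pm u/2)$. For $u$ in a small enough complex neighborhood $U$ of $0$ the complete hyperbolic structure deforms analytically to a unique critical point $\mathbf{z}(u)$, and Neumann's dilogarithmic formula identifies the critical value $\Phi\bigl(\mathbf{z}(u)\bigr)$ with $S(K;u)$.

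Next I would deform the integration cycle onto the Lefschetz thimble through $\mathbf{z}(u)$, with validity following by analyticity in $u$ from the Kashaev case $u=0$ covered by Theorem~\ref{thm:Andersen/Hansen}, provided no other saddle dominates over $U$. A Gaussian integration along the thimble produces the one-loop prefactor $\bigl(\det\mathrm{Hess}\,\Phi(\mathbf{z}(u))\bigr)^{-1/2}$ together with factors of $(N/\xi)^{k/2}$ from the $k$-dimensional fluctuation measure; these collapse to the stated $(N/\xi)^{1/2}$ after cancellation with combinatorial prefactors in the state sum, while the framing and the unknot normalization $J_N(\text{unknot};q)=1$ contribute the $1/(2\sinh(u/2))$ in front.

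The hard part will be matching the Hessian determinant with the cohomological Reidemeister torsion $T(K;u)$. This is precisely the one-loop conjecture of Dimofte--Gukov, and a proof at the generality of the above conjecture requires (i) a universal combinatorial expression for $T(K;u)$ in terms of the ideal triangulation, in the spirit of Porti and Dubois, and (ii) a direct verification that the Hessian of the dilogarithmic potential reproduces it. Establishing (i)--(ii) in full generality, together with uniform control of the steepest-descent deformation across $U$, is where the technical core of the argument lies.
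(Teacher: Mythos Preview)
The statement you are attempting to prove is a \emph{Conjecture} in the paper, not a theorem: the paper does not offer any proof of it for a general hyperbolic knot. What the paper actually proves is only the special case $K=E$ (the figure-eight knot) with $u$ real and $0<u<\log\bigl((3+\sqrt{5})/2\bigr)$, namely Theorem~\ref{thm:main}. That proof does not proceed via an ideal triangulation and a multidimensional state integral; it starts from the explicit Habiro--Le product formula \eqref{eq:Habiro_Le}, rewrites the finite sum as a one-dimensional contour integral using the quantum dilogarithm $S_\gamma$, and then applies the saddle-point method to the single-variable potential $\Phi(w)$. The identification of the Hessian with $T(u)$ and of the critical value with $S(u)$ is done by a direct computation specific to the figure-eight, not by invoking a general ``one-loop $=$ torsion'' principle.

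Your outline is a reasonable physicist's heuristic for why the conjecture should hold, but it is not a proof, and you essentially say so yourself. The genuine gaps are: (i) for an arbitrary hyperbolic knot there is no established state-integral representation of $J_N\bigl(K;\exp(\xi/N)\bigr)$ with the analytic control needed to run steepest descent uniformly in $u$; (ii) the identification of the Gaussian prefactor with the twisted Reidemeister torsion is exactly the Dimofte--Garoufalidis one-loop conjecture, which is open in general; and (iii) the claim that the dominant saddle persists and no Stokes phenomenon intervenes over a full complex neighborhood of $0$ is unproven even for the figure-eight (the paper restricts to real $u$ precisely because the required estimates are only carried out there). So there is no ``paper's proof'' to compare your proposal to: the paper leaves this statement as a conjecture, and your sketch does not close any of the gaps that keep it conjectural.
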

For physical interpretations of this conjecture, see for example \cite{Gukov:COMMP2005,Dimofte/Gukov:Columbia}.
\par
For torus knots we know the following result.
Let $T(a,b)$ be the torus knot of type $(a,b)$ for positive coprime integers $a$ and $b$.
It is known that the $\SL(2;\C)$ character variety of $\pi_1(S^3\setminus{T(a,b)})$ has $(a-1)(b-1)/2$ components \cite{Klassen:TRAAM1991} (see also \cite{Munoz:REVMC2009}).
Such components are indexed by a positive integer $k$ that is not a multiple of $a$ or $b$.
See \cite[\S~2]{Hikami/Murakami:Bonn} for details.
Let $\rho_k$ be an irreducible representation in the component indexed by $k$, 
$S_k(u)$ be the Chern--Simons invariant associated with $\rho_k$ with $\exp(\pm u/2)$ the eigenvalues of the image of the meridian by $\rho_k$, and $T_k$ be the cohomological twisted Reidemeister torsion associated with $\rho_k$.
Then we have the following formulas \cite{Hikami/Murakami:Bonn}:
\begin{align*}
  S_k(u)
  &:=
  \frac{-\bigl(2k\pi\sqrt{-1}-ab(2\pi\sqrt{-1}+u)\bigr)^2}{4ab}
  \\
  \intertext{and}
  T_k
  &:=
  \frac{16\sin^2(k\pi/a)\sin^2(k\pi/b)}{ab}.
\end{align*}
Dubois and Kashaev \cite{Dubois/Kashaev:MATHA2007}, and Hikami and the author \cite{Hikami/Murakami:Bonn} obtain the following asymptotic equivalences.
\begin{thm}[\cite{Dubois/Kashaev:MATHA2007}]
For $u=0$ we have
\begin{multline*}
  J_N\bigl(T(a,b);\exp(2\pi\sqrt{-1}/N)\bigr)
  \\
  \underset{N\to\infty}{\sim}
  \frac{\pi^{3/2}}{2ab}
  \left(\frac{N}{2\pi\sqrt{-1}}\right)^{3/2}
  \sum_{k=1}^{ab-1}
  (-1)^{k+1}k^2
  T_k^{1/2}
  \exp\left(\frac{N}{\xi}S_k(0)\right).
\end{multline*}
Note that since $T_k$ vanishes if $a$ or $b$ divides $k$, the summation is for all the irreducible components of the character variety.
\end{thm}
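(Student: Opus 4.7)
The starting point is an explicit formula of Rosso--Jones (or Morton) type expressing the colored Jones polynomial of a torus knot as a finite Gaussian sum. For $T(a,b)$ one has an identity of the shape
\begin{equation*}
  (q^{N/2}-q^{-N/2}) J_N(T(a,b);q)
  =
  q^{c_0(N)} \sum_{j} \bigl(q^{abj^2 + \alpha j} - q^{abj^2 + \beta j}\bigr),
\end{equation*}
where $c_0(N)$, $\alpha$, $\beta$ are explicit constants depending on $a$, $b$, $N$ and $j$ runs over an interval of length $\sim N$. First I would substitute $q=\exp(2\pi\sqrt{-1}/N)$; since the prefactor $q^{N/2}-q^{-N/2}$ vanishes at this root of unity, I would treat the equation as a limit (using L'Hôpital in $q$, or equivalently multiplying numerator and denominator by $q^{1/2}-q^{-1/2}$ and cancelling) so that $J_N\bigl(T(a,b);\exp(2\pi\sqrt{-1}/N)\bigr)$ is written as a finite trigonometric sum together with a factor that will later turn out to contribute the $(N/(2\pi\sqrt{-1}))^{1/2}$ of the $3/2$-power.

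Next, I would decompose the summation index $j$ by its residue class modulo $ab$: write $j = ab\,m + k$ with $k\in\{1,\dots,ab-1\}$. The cases where $a \mid k$ or $b \mid k$ produce residues for which $\sin(k\pi/a)\sin(k\pi/b)=0$, matching the vanishing of $T_k$; these classes must be shown to be subleading. In a fixed class, the exponent $2\pi\sqrt{-1}(abj^{2}+\alpha j)/N$ is quadratic in $m$ with leading coefficient of order $a^3b^3/N$, so for large $N$ the inner sum over $m$ is a wide Gaussian sum. I would treat it by Poisson summation (or equivalently Euler--Maclaurin) in the $m$-variable, converting it to a continuous Fresnel integral of width $\sim\sqrt{N/(ab)}$ that can be evaluated exactly.

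The quadratic completion in $m$ produces the phase
\begin{equation*}
  \exp\!\left(\frac{N}{2\pi\sqrt{-1}}\cdot\frac{-(2\pi\sqrt{-1})^2(k-ab)^2}{4ab}\right),
\end{equation*}
which is precisely $\exp\bigl((N/\xi) S_k(0)\bigr)$ (after absorbing the trivial shift $k\mapsto k-ab$ using the periodicity of the summand in $k$). The Gaussian width gives a factor $N^{1/2}$; combined with the $N/(2\pi\sqrt{-1})$ left over from the resolution of the $q^{N/2}-q^{-N/2}$ vanishing, one obtains the expected $(N/(2\pi\sqrt{-1}))^{3/2}$. The two signs in the Rosso--Jones formula produce a difference of two Gaussians in the class $k$; their combination, together with the explicit values of $\alpha$ and $\beta$, reduces by standard trigonometric identities to $\sin(k\pi/a)\sin(k\pi/b)$, yielding the coefficient $T_k^{1/2}$. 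The $k^2$ weight and the sign $(-1)^{k+1}$ arise from the differentiation step used to resolve the $0/0$ form at the root of unity (the quadratic term $k^2$ comes from expanding the numerator to second order in the L'Hôpital step). Summing over $k\in\{1,\dots,ab-1\}$ and comparing to the numerical prefactor $\pi^{3/2}/(2ab)$ gives the claim.

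The principal obstacle is the bookkeeping of constants and phases. The quotient $(q^{N/2}-q^{-N/2})^{-1}$ at $q=e^{2\pi\sqrt{-1}/N}$ has a genuine $1/N$ singularity whose resolution is entangled with the quadratic Gaussian summation, so one must be careful to distinguish leading and subleading contributions from both sources. Uniformity in $k$ of the Poisson/Euler--Maclaurin approximation is the technical heart: the inner sum has $\sim N/(ab)$ terms (which is large, so the Gaussian approximation is valid) while the outer sum has $ab$ terms (finite, so no further averaging helps), and one must show that the error terms are uniformly $o(N^{3/2})$ across all $ab-1$ classes. Once the uniform estimate is in place, everything else is an explicit computation.
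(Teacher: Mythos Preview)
The paper does not prove this statement; it is quoted from \cite{Dubois/Kashaev:MATHA2007} (building on Kashaev--Tirkkonen) purely as context for the main theorem about the figure-eight knot, and no argument for it appears anywhere in the text. There is therefore nothing in the paper to compare your proposal against.

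That said, your outline is essentially the route taken in the cited reference: start from the Rosso--Jones/Morton closed formula for $J_N(T(a,b);q)$, specialize to $q=\exp(2\pi\sqrt{-1}/N)$, and reduce the resulting finite sum to a Gauss-sum computation (via Poisson summation or, equivalently, the Landsberg--Schaar reciprocity used by Kashaev--Tirkkonen). The identification of the phase with $S_k(0)$ and of the amplitude with $T_k^{1/2}$ is exactly how Dubois--Kashaev repackage the answer.

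One caution on your bookkeeping: the explanation you give for the weight $(-1)^{k+1}k^2$ (``second order in the L'H\^opital step'') is not quite right. In the actual computation the single zero of $q^{N/2}-q^{-N/2}$ contributes one power of $N$ and one derivative; the second factor of $k$ comes instead from the linear term in the Gaussian exponent after completing the square in the residue class $k$. The product of these two linear-in-$k$ contributions is what produces $k^2$, not a second-order Taylor expansion of the numerator. This does not break your strategy, but if you try to execute the proof as written you will find the constants do not match until you separate these two sources.
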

\begin{thm}[\cite{Hikami/Murakami:Bonn}]
Let $u$ be a complex number with $0<|u|<2\pi/(ab)$.
Then we have
\begin{equation*}
  J_N\bigl(T(a,b);\exp(\xi/N)\bigr)
  \underset{N\to\infty}{\sim}
  \frac{1}{\Delta(T(a,b);e^{u})}
\end{equation*}
when $\Re{u}>0$ and
\begin{multline*}
  J_N\bigl(T(a,b);\exp(\xi/N)\bigr)
  \\
  \underset{N\to\infty}{\sim}
  \frac{1}{\Delta(T(a,b);e^u)}
  +
  \frac{
  \sqrt{-\pi}}{2\sinh(u/2)}
  \sum_{k=1}^{ab-1}
  (-1)^{k}
  T_k^{1/2}
  \left(\frac{N}{\xi}\right)^{1/2}
  \exp\left(\frac{N}{\xi}S_k(u)\right)
\end{multline*}
when $\Re{u}<0$, where $\xi:=u+2\pi\sqrt{-1}$ and $\Delta(T(a,b);t)$ is the Alexander polynomial.
\par
See \cite{Hikami/Murakami:Bonn} for more details.
\end{thm}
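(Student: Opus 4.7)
I would begin with an explicit finite-sum formula for the colored Jones polynomial of $T(a,b)$, such as the Rosso--Jones / Morton expression
\begin{equation*}
  J_N\bigl(T(a,b);q\bigr)
  =
  \frac{1}{q^{N/2}-q^{-N/2}}
  \sum_{j}
  q^{abj^2+(a+b)j}
  \bigl(q^{bj+b/2}-q^{-bj-b/2}\bigr),
\end{equation*}
in which $j$ runs over a finite arithmetic progression of half-integers. Substituting $q=\exp(\xi/N)$ with $\xi=u+2\pi\sqrt{-1}$ turns every summand into an exponential in $N$ whose phase is quadratic in $j$, so saddle-point asymptotics is the natural tool. Since the summation range grows with $N$, my next move would be to convert the sum into a family of integrals via Poisson summation, writing $\sum_j f(j)=\sum_{k\in\Z}\int f(x)\,e^{2\pi\sqrt{-1}kx}\,dx$, which produces, for each $k\in\Z$, an integral
\begin{equation*}
  I_k(N)
  =
  \int \exp\!\Bigl(\tfrac{N}{\xi}\Phi_k(x,u)\Bigr)\,A(x,u)\,dx
\end{equation*}
amenable to steepest-descent analysis.

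Next I would identify the stationary points. Because $\Phi_k$ is quadratic in $x$, the equation $\partial_x\Phi_k(x_*,u)=0$ has a unique solution $x_*(k,u)$, and I would expect these to be in bijection with the $\SL(2;\C)$-characters of $\pi_1(S^3\setminus T(a,b))$ whose meridian has eigenvalues $e^{\pm u/2}$. By Klassen's classification \cite{Klassen:TRAAM1991} the irreducible characters correspond precisely to those $k\in\{1,\dots,ab-1\}$ with $a\nmid k$ and $b\nmid k$, matching the range of the $k$-sum in the statement; when $a\mid k$ or $b\mid k$, the would-be saddle lands on a zero of the non-exponential prefactor, consistent with $T_k=0$. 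The central computation is to verify, by direct substitution, that $\Phi_k\bigl(x_*(k,u),u\bigr)=S_k(u)$, that the Gaussian prefactor from $\Phi_k''(x_*)$ produces precisely $\tfrac{\sqrt{-\pi}}{2\sinh(u/2)}T_k^{1/2}(N/\xi)^{1/2}$, and that the oscillatory factor $e^{2\pi\sqrt{-1}kx_*}$ combines with the antisymmetric $q^{bj+b/2}-q^{-bj-b/2}$ to produce the sign $(-1)^k$. The $k=0$ Poisson term has no saddle in the effective contour; instead it picks up a residue at the simple pole of $1/(q^{N/2}-q^{-N/2})$, and the standard product expression for the Alexander polynomial of a torus knot shows that this residue equals $1/\Delta(T(a,b);e^u)$.

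The decisive step will be the Stokes analysis that accounts for the dichotomy between $\Re u>0$ and $\Re u<0$. In the disk $0<|u|<2\pi/(ab)$ the saddles $x_*(k,u)$ for distinct admissible $k$ stay well separated, and each critical value $\Re\bigl(S_k(u)/\xi\bigr)$ varies monotonically with $\Re u$. When $\Re u>0$ every $\Re\bigl(S_k(u)/\xi\bigr)$ is negative, so after deforming each $I_k$ to its steepest-descent contour all saddle contributions are exponentially dominated by the abelian residue, leaving only $1/\Delta(T(a,b);e^u)$ in the limit. When $\Re u<0$ the signs flip: the deformation required to reach the steepest-descent contour sweeps across the saddle $x_*(k,u)$ for every admissible $k$, collecting its contribution and producing the full sum in the second formula. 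The hardest step, and the main obstacle I anticipate, is controlling the steepest-descent contours globally and ruling out additional Stokes jumps inside the disk $|u|<2\pi/(ab)$; once this is in place, assembling the abelian-pole contribution with the isolated-saddle contributions yields the asymptotic equivalence stated in the theorem.
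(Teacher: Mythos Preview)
The paper does not contain a proof of this theorem. It is quoted from \cite{Hikami/Murakami:Bonn} with the explicit pointer ``See \cite{Hikami/Murakami:Bonn} for more details,'' and the present paper uses it only as a motivating analogue for the figure-eight result (Theorem~\ref{thm:main}). Consequently there is no ``paper's own proof'' to compare your proposal against.

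That said, your outline is in the right spirit for how such results are typically established: starting from the Rosso--Jones / Morton sum, passing to integrals (via Poisson summation or a related integral representation), and extracting Gaussian saddles indexed by the irreducible components of the character variety. A couple of points you should be cautious about. First, your identification of the $k=0$ Poisson term with the abelian contribution $1/\Delta(T(a,b);e^{u})$ is stated rather loosely; in practice the Alexander term arises from a finite collection of terms (or a contour shift picking up a specific residue) rather than from a single ``$k=0$ has no saddle'' observation, and getting the normalization right is where most of the work hides. Second, your Stokes analysis is the genuine content of the dichotomy $\Re u>0$ versus $\Re u<0$, and the assertion that ``every $\Re(S_k(u)/\xi)$ is negative when $\Re u>0$'' needs to be checked directly from the explicit formula $S_k(u)=-\bigl(2k\pi\sqrt{-1}-ab\xi\bigr)^2/(4ab)$ rather than asserted by monotonicity. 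If you want to complete this, you should consult \cite{Hikami/Murakami:Bonn} (and also \cite{Dubois/Kashaev:MATHA2007} for the $u=0$ case), where the argument is carried out in detail.
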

\par
The paper is organized as follows.
In Section~\ref{sec:integral} we give an integral formula for the colored Jones polynomial using the quantum dilogarithm.
We study its asymptotic behavior by using the saddle point method to give a proof of Theorem~\ref{thm:main} in Section~\ref{sec:approximation}.
In Section~\ref{sec:interpretation} we give topological interpretations of $S(u)$ and $T(u)$.
Sections~\ref{sec:S_gamma} to \ref{sec:Phi_0} are devoted to miscellaneous calculations.
\section{Integral formula for the colored Jones polynomial}
\label{sec:integral}
In this section we use the quantum dilogarithm function to express the colored Jones polynomial of the figure-eight knot as an integral.
We mainly follow \cite{Andersen/Hansen:JKNOT2006}.
\par
First of all we recall the following formula due to Habiro and Le (see for example \cite{Masbaum:ALGGT12003}).
\begin{equation}\label{eq:Habiro_Le}
\begin{split}
  J_N(E;q)
  &=
  \sum_{k=0}^{N-1}
  \prod_{l=1}^{k}
  \left(q^{(N-l)/2}-q^{-(N-l)/2}\right)
  \left(q^{(N+l)/2}-q^{-(N+l)/2}\right)
  \\
  &=
  \sum_{k=0}^{N-1}
  q^{-kN}
  \prod_{l=1}^{k}
  \left(1-q^{N-l}\right)
  \left(1-q^{N+l}\right).
\end{split}
\end{equation}
\par
For a complex number $\gamma$ with $\Re(\gamma)>0$, define the quantum dilogarithm $S_{\gamma}(z)$ as follows \cite{Faddeev:LETMP1995}:
\begin{equation*}
  S_{\gamma}(z)
  :=
  \exp
  \left(
    \frac{1}{4}
    \int_{C_R}
    \frac{e^{zt}}{\sinh(\pi t)\sinh(\gamma t)}
    \frac{dt}{t}
  \right),
\end{equation*}
where $|\Re(z)|<\pi+\Re(\gamma)$ and $C_R$ is $(-\infty,-R]\cup\Omega_R\cup[R,\infty)$ with $\Omega_R:=\{R\exp(\sqrt{-1}(\pi-s))\mid0\le s\le\pi\}$ for $0<R<\min\{\pi/|\gamma|,1\}$.
Note that the poles of the integrand are $0,\pm\sqrt{-1},\pm2\sqrt{-1},\dots$ and $\pm\pi\sqrt{-1}/\gamma,\pm2\pi\sqrt{-1}/\gamma,\dots$.
\begin{rem}
Note that in \cite{Andersen/Hansen:JKNOT2006} it is assumed that $\gamma$ is real and $0<\gamma<1$ but we can define $S_{\gamma}(z)$ when $\Re(\gamma)>0$.
See \cite[(3.21)]{Dimofte/Gukov/Lenells/Zagier:CNTP2010}.
(Our quantum dilogarithm $S_{\gamma}(z)$ is equal to $\Phi(z/(2\pi);\gamma/\pi)$ in \cite{Dimofte/Gukov/Lenells/Zagier:CNTP2010}.)
We give a proof of the analyticity of $S_{\gamma}$ in Lemma~\ref{lem:analyticity}.
\end{rem}
The following formula is well known and its proof can be found in \cite[p.~530]{Andersen/Hansen:JKNOT2006}.
Note that they assume that $\gamma$ is real but their proof is also valid in our case.
\begin{lem}
If $|\Re(z)|<\pi$, then we have
\begin{equation}\label{eq:S_gamma}
  \left(1+e^{\sqrt{-1}z}\right)S_{\gamma}(z+\gamma)
  =
  S_{\gamma}(z-\gamma).
\end{equation}
\end{lem}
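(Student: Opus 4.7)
The plan is to form the logarithmic quotient of the two sides directly from the integral definition of $S_\gamma$ and to evaluate the resulting contour integral by residues.

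\textbf{Step 1 (cancellation).} From the definition,
\[
\log\frac{S_\gamma(z-\gamma)}{S_\gamma(z+\gamma)}=\frac{1}{4}\int_{C_R}\frac{e^{(z-\gamma)t}-e^{(z+\gamma)t}}{\sinh(\pi t)\sinh(\gamma t)}\,\frac{dt}{t}.
\]
The identity $e^{(z-\gamma)t}-e^{(z+\gamma)t}=-2e^{zt}\sinh(\gamma t)$ cancels the $\sinh(\gamma t)$ in the denominator, giving
\[
\log\frac{S_\gamma(z-\gamma)}{S_\gamma(z+\gamma)}=-\frac{1}{2}\int_{C_R}\frac{e^{zt}}{t\,\sinh(\pi t)}\,dt.
\]
This is the whole reason the shift-by-$\gamma$ relation is clean: after cancellation the integrand is $\gamma$-independent, and for $|\Re(z)|<\pi$ it still decays exponentially on the tails of $C_R$.

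\textbf{Step 2 (residues).} Close the contour with a large semicircular arc in the upper half-plane. Because $C_R$ steps over $t=0$ via $\Omega_R$, the origin lies outside the enclosed region, and the only poles inside are the simple zeros of $\sinh(\pi t)$ at $t=\sqrt{-1}n$ for $n=1,2,\dots$. Since $\sinh(\pi t)=(-1)^n\pi(t-\sqrt{-1}n)+O((t-\sqrt{-1}n)^2)$ near $t=\sqrt{-1}n$,
\[
\Res_{t=\sqrt{-1}n}\frac{e^{zt}}{t\sinh(\pi t)}=\frac{(-1)^n e^{\sqrt{-1}zn}}{\sqrt{-1}\pi n}.
\]
Summing via the Taylor series of $\log$ yields $\sum_{n\ge 1}(-1)^n e^{\sqrt{-1}zn}/n=-\log(1+e^{\sqrt{-1}z})$, so
\[
\int_{C_R}\frac{e^{zt}}{t\sinh(\pi t)}\,dt=2\pi\sqrt{-1}\cdot\frac{-\log(1+e^{\sqrt{-1}z})}{\sqrt{-1}\pi}=-2\log(1+e^{\sqrt{-1}z}).
\]
Substituting into Step~1 and exponentiating gives exactly \eqref{eq:S_gamma}.

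\textbf{Main obstacle.} The one delicate point is showing that the large-arc contribution vanishes. Writing $t=\rho e^{\sqrt{-1}\theta}$, one has $|\sinh(\pi t)|\sim\tfrac{1}{2}e^{\pi\rho|\cos\theta|}$ away from the imaginary axis, and together with the hypothesis $|\Re(z)|<\pi$ this gives decay of order $e^{-\epsilon\rho}$ in sectors bounded away from $\theta=\pi/2$. Near $\theta=\pi/2$, however, $|\sinh(\pi t)|$ collapses to $|\sin(\pi\rho)|$, which can be arbitrarily small for generic $\rho$; the standard remedy is to restrict to radii $\rho=N+\tfrac12$, for which $|\sin(\pi\rho)|=1$ and the integrand is uniformly $O(1/\rho)$ across the top of the arc. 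For $\Im(z)>0$ the extra factor $e^{-\Im(z)\rho\sin\theta}$ then kills the contribution outright, so the identity holds on the half-strip $|\Re(z)|<\pi,\ \Im(z)>0$; since both sides of \eqref{eq:S_gamma} are holomorphic on the full strip $|\Re(z)|<\pi$, the equality extends there by analytic continuation.
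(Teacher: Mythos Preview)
Your proof is correct and is exactly the standard argument: the paper itself does not supply a proof but defers to \cite[p.~530]{Andersen/Hansen:JKNOT2006}, where the computation is precisely the one you give---cancel $\sinh(\gamma t)$ via $e^{(z-\gamma)t}-e^{(z+\gamma)t}=-2e^{zt}\sinh(\gamma t)$, then evaluate the remaining $\gamma$-free integral by residues at $t=\sqrt{-1}n$. Your treatment of the arc (radii $\rho=N+\tfrac12$, establishing the identity first for $\Im z>0$ and then extending by analytic continuation) is the customary way to make the residue step rigorous.
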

Putting $\gamma:=(2\pi-\sqrt{-1}u)/(2N)$ and $z:=\pi-\sqrt{-1}u-2l\gamma$ ($l=1,2,\dots,N-1$) in \eqref{eq:S_gamma}, we have
\begin{equation*}
  \left(1+e^{\sqrt{-1}(\pi-\sqrt{-1}u-2l\gamma)}\right)
  S_{\gamma}(\pi-\sqrt{-1}u-2l\gamma+\gamma)
  =
  S_{\gamma}(\pi-\sqrt{-1}u-2l\gamma-\gamma).
\end{equation*}
So for $k=1,2,\dots,N-1$ we have
\begin{equation*}
\begin{split}
  \prod_{l=1}^{k}\bigl(1-\exp((N-l)\xi/N)\bigr)
  &=
  \prod_{l=1}^{k}
  \left(1+e^{\sqrt{-1}(\pi-\sqrt{-1}u-2\pi l/N+\sqrt{-1}ul/N)}\right)
  \\
  &=
  \prod_{l=1}^{k}
  \frac{S_{\gamma}(\pi-\sqrt{-1}u-(2l+1)\gamma)}
       {S_{\gamma}(\pi-\sqrt{-1}u-(2l-1)\gamma)}
  \\
  &=
  \frac{S_{\gamma}(\pi-\sqrt{-1}u-(2k+1)\gamma)}
       {S_{\gamma}(\pi-\sqrt{-1}u-\gamma)}.
\end{split}
\end{equation*}
Similarly we have
\begin{equation*}
  \prod_{l=1}^{k}\bigl(1-\exp((N+l)\xi/N)\bigr)
  =
  \frac{S_{\gamma}(-\pi-\sqrt{-1}u+\gamma)}
       {S_{\gamma}(-\pi-\sqrt{-1}u+(2k+1)\gamma)}.
\end{equation*}
Therefore we have
\begin{multline*}
  J_N\bigl(E;\exp(\xi/N)\bigr)
  \\
  =
  \frac{S_{\gamma}(-\pi-\sqrt{-1}u+\gamma)}{S_{\gamma}(\pi-\sqrt{-1}u-\gamma)}
  \sum_{k=0}^{N-1}
  \exp(-ku)
  \frac{S_{\gamma}(\pi-\sqrt{-1}u-(2k+1)\gamma)}
       {S_{\gamma}(-\pi-\sqrt{-1}u+(2k+1)\gamma)}.
\end{multline*}
\par
Using $S_{\gamma}$ we define
\begin{equation}\label{eq:g_N_def}
  g_N(w)
  :=
  \exp(-Nuw)
  \frac{S_{\gamma}(\pi-\sqrt{-1}u+\sqrt{-1}\xi w)}
        {S_{\gamma}(-\pi-\sqrt{-1}u-\sqrt{-1}\xi w)}.
\end{equation}
Since $S_{\gamma}(z)$ is defined for $|\Re(z)|<\pi+\Re(\gamma)$, the function $g_N(w)$ is defined for $w$ with $|\pi-\Im(\xi w)|<\pi+\Re(\gamma)=\pi+\pi/N$, that is, $w$ is in the strip $-(2\pi/u)\Re(w)-\pi/(Nu)<\Im(w)<-(2\pi/u)\Re(w)+2\pi/u+\pi/(Nu)$ (Figure~\ref{fig:contour}).
\begin{figure}[h]
\includegraphics[scale=1]{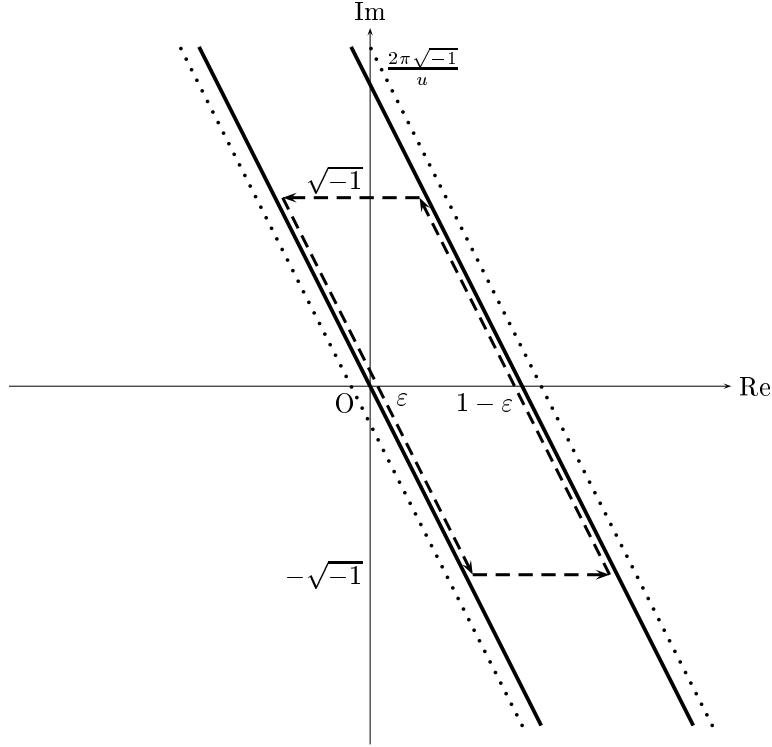}
\caption{The function $g_N$ ($\Phi$, respectively) is defined between the two dotted (thick, respectively) lines.
The dashed parallelogram indicates the contour $C_{+}(\varepsilon)\cup C_{-}(\varepsilon)$.}
\label{fig:contour}
\end{figure}
\par
For $0<\varepsilon<1/(4N)$, let $C_{+}(\varepsilon)$ be the polygonal line that connects $1-\varepsilon$, $1-u/(2\pi)-\varepsilon+\sqrt{-1}$, $-u/(2\pi)+\varepsilon+\sqrt{-1}$, and $\varepsilon$, and $C_{-}(\varepsilon)$ be the polygonal line that connects $\varepsilon$, $u/(2\pi)-\sqrt{-1}$, $1+u/(2\pi)-\sqrt{-1}$, and $1-\varepsilon$.
See Figure~\ref{fig:contour}.
Put $C(\varepsilon):=C_{-}(\varepsilon)\cup C_{+}(\varepsilon)$.
Note that the domain of $g_N(w)$ contains $C(\varepsilon)$.
\par
By the residue theorem we have
\begin{multline*}
  J_N\bigl(E;\exp(\xi/N)\bigr)
  \\
  =
  \frac{S_{\gamma}(-\pi-\sqrt{-1}u+\gamma)}{S_{\gamma}(\pi-\sqrt{-1}u-\gamma)}
  \frac{\sqrt{-1}\exp(u/2)N}{2}
  \int_{C(\varepsilon)}
  \tan(N\pi w)
  g_N(w)
  dw
\end{multline*}
since the set of the poles of $\tan(N\pi w)$ inside $C(\varepsilon)$ is $\{(2k+1)/(2N)\mid k=0,1,2,\dots,N-1\}$ and the residue of each pole is $-1/(N\pi)$.
\par
Putting
\begin{equation*}
  G_{\pm}(N,\varepsilon)
  :=
  \int_{C_{\pm}(\varepsilon)}\tan(N\pi w)g_N(w)\,dw,
\end{equation*}
we have
\begin{equation}\label{eq:G}
  J_N\bigl(E;\exp(\xi/N)\bigr)
  =
  \frac{S_{\gamma}(-\pi-\sqrt{-1}u+\gamma)}{S_{\gamma}(\pi-\sqrt{-1}u-\gamma)}
  \frac{\sqrt{-1}\exp(u/2)N}{2}
  \bigl(G_{+}(N,\varepsilon)+G_{-}(N,\varepsilon)\bigr).
\end{equation}
\section{Approximating the integral formula}
\label{sec:approximation}
In this section we approximate the integral formula for the colored Jones polynomial obtained in the previous section.
\par
Since $\tan(N\pi w)$ is close to $\sqrt{-1}$ ($-\sqrt{-1}$, respectively) when $\Im(w)$ is positive and large (negative and $|\Im(w)|$ is large, respectively), we can approximate $G_{\pm}(N,\varepsilon)$ by the integral of $g_N(w)$ on $C_{\pm}(\varepsilon)$.
In fact if we write
\begin{equation*}
  G_{\pm}(N,\varepsilon)
  =
  \pm\sqrt{-1}\int_{C_{\pm}(\varepsilon)}g_N(w)\,dw
  +
  \int_{C_{\pm}(\varepsilon)}(\tan(N\pi w)\mp\sqrt{-1})g_N(w)\,dw,
\end{equation*}
then we have the following lemma.
\begin{prop}[see Equation (4.7) in \cite{Andersen/Hansen:JKNOT2006}]\label{prop:4.7}
There exists a positive constant $K_{1,\pm}$ independent of $N$ and $\varepsilon$ such that the following inequality holds:
\begin{equation*}
  \left|
    \int_{C_{\pm}(\varepsilon)}(\tan(N\pi w)\mp\sqrt{-1})g_N(w)\,dw
  \right|
  <
  \frac{K_{1,\pm}}{N}.
\end{equation*}
\end{prop}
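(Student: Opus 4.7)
The plan is to expose the exponential decay of $\tan(N\pi w)\mp\sqrt{-1}$ off the real axis via the identity
\begin{equation*}
\tan(N\pi w)\mp\sqrt{-1}=\frac{\mp\sqrt{-1}\,e^{\pm\sqrt{-1}N\pi w}}{\cos(N\pi w)}.
\end{equation*}
On $C_{+}(\varepsilon)$ the imaginary part of $w$ lies in $[0,1]$, so $|e^{\sqrt{-1}N\pi w}|=e^{-N\pi\Im(w)}$ provides an exponentially small factor whenever $\Im(w)$ is bounded away from $0$; symmetrically on $C_{-}(\varepsilon)$ the factor $|e^{-\sqrt{-1}N\pi w}|=e^{-N\pi|\Im(w)|}$ does the same job.

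The heart of the argument is a uniform bound
\begin{equation*}
\left|\frac{g_N(w)}{\cos(N\pi w)}\right|\le M
\end{equation*}
on $C_{\pm}(\varepsilon)$, with $M$ independent of both $N$ and $\varepsilon$. On the parts of the contour with $|\Im(w)|$ bounded below, $|\cos(N\pi w)|$ grows like $\tfrac{1}{2}e^{N\pi|\Im(w)|}$, so it suffices to check that $|g_N(w)|$ grows at most at the same rate. Applying the Faddeev-type asymptotic expansion of $\log S_{\gamma}$ with $\gamma=-\sqrt{-1}\xi/(2N)\to 0$, one finds that, to leading order, $\log|g_N(w)|$ equals $N$ times the real part of
\begin{equation*}
-uw+\frac{1}{\xi}\bigl(\Li_2(e^{u-\xi w})-\Li_2(e^{u+\xi w})\bigr),
\end{equation*}
and the contour has been chosen precisely so that this real part never exceeds $\pi|\Im(w)|$. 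Near the endpoints $\varepsilon$ and $1-\varepsilon$ on the real axis, the hypothesis $0<\varepsilon<1/(4N)$ keeps $w$ at distance at least $1/(4N)$ from every pole $(2k+1)/(2N)$ of $\tan(N\pi w)$, so $|\cos(N\pi w)|$ stays bounded below uniformly in $N$, while $|g_N(w)|$ is manifestly bounded near those points.

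Once the uniform bound is in hand, the conclusion follows by direct integration. Parametrizing each of the two oblique segments of $C_{+}(\varepsilon)$ by $t\in[0,1]$ so that $\Im(w(t))=t$ and the Euclidean length is some fixed $L$, one obtains
\begin{equation*}
\left|\int_{\text{oblique}}(\tan(N\pi w)-\sqrt{-1})g_N(w)\,dw\right|
\le ML\int_{0}^{1}e^{-N\pi t}\,dt\le\frac{ML}{N\pi}.
\end{equation*}
The horizontal segment at $\Im(w)=1$ contributes at most $ML'\,e^{-N\pi}$, which is negligible compared with $1/N$; summing the three pieces gives a bound of the form $K_{1,+}/N$, and the argument on $C_{-}(\varepsilon)$ is symmetric.

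The main obstacle is the uniform estimate on $|g_N(w)/\cos(N\pi w)|$. This requires sharp control of the real part of the leading exponent of $\log g_N$ together with the precise asymptotics of the quantum dilogarithm; the restriction $0<u<\log((3+\sqrt{5})/2)$ enters exactly to guarantee the needed inequality on the real part of the leading exponent, and the case $u=0$ treated in \cite{Andersen/Hansen:JKNOT2006} serves as the template to be adapted.
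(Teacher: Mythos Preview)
Your overall strategy matches the paper's: both decompose the contour into its three segments, exploit the exponential decay of $\tan(N\pi w)\mp\sqrt{-1}$ off the real axis, and control $|g_N(w)|$ through $\exp(N\Re\Phi(w))$ times a uniformly bounded factor coming from the correction term $I_\gamma$. The gap is that the uniform bound $|g_N(w)/\cos(N\pi w)|\le M$ you assert is false. On the oblique segment of $C_-(\varepsilon)$ emanating from $1-\varepsilon$, parametrized so that $|\Im w|=t$, the paper's own computation yields only $\Re\Phi(w)<\tfrac{3}{2}\pi t$, and one checks directly (for instance at $t=1$, $u=1/2$) that $\Re\Phi(w)>\pi t=\pi|\Im w|$; since $|\cos(N\pi w)|\sim\tfrac12 e^{N\pi t}$, the ratio $|g_N|/|\cos|$ blows up like $e^{cN}$ there. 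What actually suffices is the weaker inequality $\Re\Phi(w)<(2-\delta)\pi|\Im w|$ on the oblique segments (and an analogous bound on the horizontal ones), so that $|(\tan\mp\sqrt{-1})g_N|$ still decays exponentially in $N|\Im w|$.

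Establishing even this weaker inequality is the substantive part of the argument, and you have not attempted it: the paper carries it out through a lengthy segment-by-segment analysis, repeatedly invoking the reflection identity $\Li_2(z)+\Li_2(z^{-1})=-\pi^2/6-\tfrac12(\log(-z))^2$ with careful branch tracking, and splitting further into cases according to the sign of quantities such as $u-|\xi|^2t/(2\pi)$. Your sentence ``the contour has been chosen precisely so that this real part never exceeds $\pi|\Im(w)|$'' is thus both unproved and incorrect. A minor point: the restriction $u<\log\bigl((3+\sqrt{5})/2\bigr)$ plays no role in this proposition---the paper's estimates use only $0<u<1$---so your remark about where that restriction enters is misplaced.
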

A proof is given in Section~\ref{sec:prop:4.7}.
\par
Now we approximate the integral of $g_N(w)$ along $C_{\pm}(\varepsilon)$.
Define
\begin{equation*}
  \Phi(w)
  :=
  \frac{1}{\xi}
  \bigl(
    \Li_2\left(e^{u-\xi w}\right)-\Li_2\left(e^{u+\xi w}\right)
  \bigr)
  -uw.
\end{equation*}
Since $\Li_2$ is analytic in the region $\C\setminus(1,\infty)$, the function $\Phi$ is analytic in the region $\{w\in\C\mid-\frac{2\pi}{u}\Re(w)<\Im(w)<-\frac{2\pi}{u}(\Re(w)-1)\}$ (Figure~\ref{fig:contour}).
\par
\begin{prop}[see Equation (4.9) in \cite{Andersen/Hansen:JKNOT2006}]
\label{prop:4.9}
Let $p(\varepsilon)$ be any contour in the parallelogram bounded by $C(\varepsilon)$ connecting $\varepsilon$ and $1-\varepsilon$, then there exists a positive constant $K_2$ independent of $N$ and $\varepsilon$ such that the following inequality holds.
\begin{equation*}
  \left|
    \int_{p(\varepsilon)}
    g_N(w)\,dw
    -
    \int_{p(\varepsilon)}
    \exp(N\Phi(w))\,dw
  \right|
  \le
  \frac{K_2\log(N)}{N}
  \max_{w\in p(\varepsilon)}\left\{\exp\bigl(N\Re\Phi(w)\bigr)\right\}.
\end{equation*}
\end{prop}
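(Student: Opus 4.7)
The plan is to prove a uniform pointwise comparison
$$g_N(w) - \exp(N\Phi(w)) = \exp(N\Phi(w)) \cdot O\!\left(\tfrac{\log N}{N}\right)$$
valid for all $w \in p(\varepsilon)$, and then to integrate against the bounded length of the contour. Since $p(\varepsilon)$ is contained in the fixed parallelogram bounded by $C(\varepsilon)$ (whose perimeter is bounded independent of $N$ and $\varepsilon$), such a pointwise bound immediately yields the claim.

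The core input is the small-$\gamma$ asymptotic expansion of the Faddeev quantum dilogarithm, which I plan to establish in a later section using the integral definition of $S_\gamma$: namely, for $z$ bounded away from the singular set of $S_\gamma$,
$$\log S_\gamma(z) \;=\; \frac{1}{2\sqrt{-1}\,\gamma}\Li_2(-e^{\sqrt{-1}z}) \;+\; (\text{lower-order remainder}).$$
With $\gamma = (2\pi - \sqrt{-1}u)/(2N) = -\sqrt{-1}\xi/(2N)$ one has $1/(2\sqrt{-1}\gamma) = N/\xi$, so the leading term scales like $N$. Applying this expansion to both $S_\gamma$ factors in the definition \eqref{eq:g_N_def} of $g_N(w)$, using the evaluations
$$-e^{\sqrt{-1}(\pi - \sqrt{-1}u + \sqrt{-1}\xi w)} = e^{u - \xi w}, \qquad -e^{\sqrt{-1}(-\pi - \sqrt{-1}u - \sqrt{-1}\xi w)} = e^{u + \xi w},$$
and combining with the prefactor $\exp(-Nuw)$, the $O(N)$ terms assemble exactly into $N\Phi(w)$. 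Thus $\log g_N(w) = N\Phi(w) + O(1/N)$ pointwise in the interior of the domain of analyticity of $\Phi$, and exponentiating yields $g_N(w)/\exp(N\Phi(w)) = 1 + O(1/N)$ on compact subsets of that domain.

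The main obstacle, and the reason for the extra $\log N$ factor, is that the contour $p(\varepsilon)$ may come within distance $\varepsilon$ of the singular lines $\{w = 0\}$ and $\{w = 1\}$ of $\Phi$, and the proposition permits $\varepsilon$ to be as small as $1/(4N)$. The remainder in the quantum dilogarithm asymptotic degrades logarithmically in the distance from $z$ to the poles and zeros of $S_\gamma$, so the uniform error on $p(\varepsilon)$ picks up a factor of $\log(1/\varepsilon) = O(\log N)$. The technical heart of the proof is therefore a careful bookkeeping of the remainder in the asymptotic expansion of $\log S_\gamma$ as its argument approaches the singular set at a rate comparable to $\gamma$; once this yields $|g_N(w)/\exp(N\Phi(w)) - 1| \le K_2 \log(N)/N$ uniformly on $p(\varepsilon)$, integration along the contour gives the stated bound.
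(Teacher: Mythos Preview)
Your strategy has a genuine gap: the uniform pointwise bound
\[
  \bigl|g_N(w)/\exp(N\Phi(w)) - 1\bigr| \le K_2\,\frac{\log N}{N}
\]
cannot hold near the endpoints $w=\varepsilon$ and $w=1-\varepsilon$ when $\varepsilon$ is of order $1/N$, which the proposition explicitly permits. Writing $g_N(w)=\exp(N\Phi(w))\exp\bigl(I_\gamma(\pi-\sqrt{-1}u+\sqrt{-1}\xi w)-I_\gamma(-\pi-\sqrt{-1}u-\sqrt{-1}\xi w)\bigr)$, the remainder $I_\gamma(z)$ satisfies (Lemma~\ref{lem:Lemma3}) a bound of the form $|I_\gamma(z)|\lesssim |\gamma|\bigl(1/(\pi-\Re z)+1/(\pi+\Re z)\bigr)+O(|\gamma|)$. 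At $w=\varepsilon$ one has $\pi-\Re z = 2\pi\varepsilon$, so with $\varepsilon\asymp 1/N$ and $|\gamma|\asymp 1/N$ the remainder is only $O(1)$, not $o(1)$. The degradation is \emph{inverse-linear} in the distance to the singular line, not logarithmic as you assert; consequently $h_\gamma(w):=g_N(w)/\exp(N\Phi(w))-1$ is merely bounded near the endpoints, and no uniform $O(\log N/N)$ pointwise estimate is available.

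The paper's proof avoids this by bounding the \emph{integral} of $|h_\gamma|$ rather than its supremum. After pulling out $\max_{p(\varepsilon)}\exp(N\Re\Phi)$, one is reduced to $\int_0^1|h_\gamma(t)|\,dt$, which is split into $[0,|\gamma|]\cup[|\gamma|,1-|\gamma|]\cup[1-|\gamma|,1]$. On the two end intervals $|h_\gamma|=O(1)$ but the length is $|\gamma|=O(1/N)$, contributing $O(1/N)$. On the middle interval $|h_\gamma(t)|\lesssim |\gamma|\,f(t)$ with $f(t)=1/t+1/(1-t)$, and it is the integral $\int_{|\gamma|}^{1/2}dt/t=\log N+O(1)$ that produces the $\log N$ factor --- not any pointwise logarithmic blow-up. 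So the $\log N$ has a different origin than you suggest, and the argument genuinely requires an $L^1$ estimate rather than an $L^\infty$ one.
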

A proof is given in Section~\ref{sec:prop:4.9}.
\par
We will study the asymptotic behavior of $\int_{C_{\pm}(\varepsilon)}\exp\bigl(N\Phi(w)\bigr)\,dw$ for large $N$.
\par
Since $\Phi(w)$ is analytic in the region $\{w\in\C\mid-\frac{2\pi}{u}\Re(w)<\Im(w)<-\frac{2\pi}{u}(\Re(w)-1)\}$, we have
\begin{equation*}
  \int_{C_{+}(\varepsilon)}\exp\bigl(N\Phi(w)\bigr)\,dw
  =
  \int_{C_{-}(\varepsilon)}\exp\bigl(N\Phi(w)\bigr)\,dw
\end{equation*}
by Cauchy's integral theorem.
\par
We will apply the saddle point method (see for example \cite[\S~7.2]{Marsden/Hoffman:Complex_Analysis}) to approximate the integral $\int_{C_{-}(\varepsilon)}\exp(N\Phi(w))\,dw$.
\par
First we find a solution to the equation $d\,\Phi(w)/d\,w=0$.
Since we have
\begin{equation*}
  \frac{d\,\Phi(w)}{d\,w}
  =
  \log(1-e^{u-\xi w})+\log(1-e^{u+\xi w})-u,
\end{equation*}
a solution to the equation
\begin{equation*}
  e^{\xi w}+e^{-\xi w}
  =
  e^u+e^{-u}-1
\end{equation*}
can be a saddle point that we need.
Put
\begin{align}\label{eq:varphi}
  \varphi(u)
  &:=
  \arccosh(\cosh(u)-1/2)\notag
  \\
  &=
  \log
  \left(
    \frac{1}{2}
    \left(
      e^u+e^{-u}-1-\sqrt{(e^u+e^{-u}+1)(e^u+e^{-u}-3)}
    \right)
  \right),
  \\
  \tilde{\varphi}(u)
  &:=
  \varphi(u)+2\pi\sqrt{-1},\notag
  \\
  \intertext{and}
  w_0
  &:=
  \frac{\tilde{\varphi}(u)}{\xi},\notag
\end{align}
where we choose the square root of $(e^u+e^{-u}+1)(e^u+e^{-u}-3)$ as a positive multiple of $\sqrt{-1}$ and the branch of $\log$ so that $-\pi/3<\Im\varphi(u)<0$.
\begin{rem}\label{rem:varphi}
Note that $\varphi(u)$ and $\tilde{\varphi}(u)$ are purely imaginary since $\left|e^u+e^{-u}-1-\sqrt{(e^u+e^{-u}+1)(e^u+e^{-u}-3)}\right|=4$.
\end{rem}
It is easy to see that $d\,\Phi(w_0)/d\,w=0$.
Since we have
\begin{equation*}
  \Im(w_0)+\frac{2\pi}{u}\Re(w_0)
  =
  \frac{\Im\tilde{\varphi}(u)}{u},
\end{equation*}
$w_0$ is in the domain of $\Phi$.
\par
\begin{figure}[h]
  \includegraphics{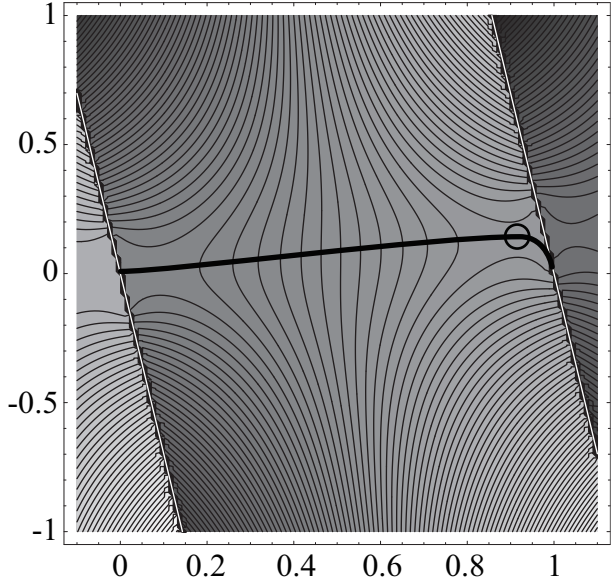}
  \caption{A contour plot of $\Re\Phi(w)$ on the complex plane for $u=0.9$.
  A brighter part is higher than a darker part.
  The path $P$ is indicated by a thick curve and the saddle point $w_0$ is marked by a circle.}
  \label{fig:contour09}
\end{figure}
We choose a path $P$ from $\varepsilon$ to $1-\varepsilon$ that passes through $w_0$ so that near $w_0$ it keeps $\Im\Phi(w)$ constant and that $\Re\Phi(w)$ takes its maximum (over all $w$ on $P$) at $w_0$ as indicated in the thick curve in Figures~\ref{fig:contour09}.
Then the integral $\int_{C_-(\varepsilon)}\exp(N\Phi(w))\,dw$ is approximated by the integral near $w_0$ along the path we choose.
More precisely we have
\begin{equation}\label{eq:Phi_saddle}
  \int_{C_-(\varepsilon)}\exp(N\Phi(w))\,dw
  \underset{N\to\infty}{\sim}
  \frac{\sqrt{2\pi}\exp(N\Phi(w_0))}{\sqrt{N}\sqrt{-d^2\,\Phi(w_0)/d\,w^2}}
\end{equation}
from \cite[Theorem~7.2.8]{Marsden/Hoffman:Complex_Analysis}, where the sign of the square root of $-d^2\,\Phi(w_0)/d\,w^2$ is chosen so that
\begin{equation}\label{eq:argument}
  \sqrt{-d^2\,\Phi(w_0)/d\,w^2}\times\text{(tangent of $P$ at $w_0$)}>0.
\end{equation}
Note that
\begin{equation}\label{eq:Phi_0}
  \Phi(w_0)
  =
  \frac{1}{\xi}
  \left(
    \Li_2\left(e^{u-\varphi(u)}\right)
    -
    \Li_2\left(e^{u+\varphi(u)}\right)
    -u\tilde{\varphi}(u)
  \right).
\end{equation}
\par
From Propositions~\ref{prop:4.7} and \ref{prop:4.9}, choosing $P$ as a contour in Proposition~\ref{prop:4.9} we have
\begin{equation*}
\begin{split}
  &\left|
    G_{\pm}(N,\varepsilon)
    \mp
    \sqrt{-1}
    \int_{C_{\pm}(\varepsilon)}\exp\bigl(N\Phi(w)\bigr)\,dw
  \right|
  \\
  \le&
  \left|
    \int_{C_{\pm}(\varepsilon)}(\tan(N\pi w)\mp\sqrt{-1})g_N(w)\,dw
    \pm\sqrt{-1}
    \int_{C_{\pm}(\varepsilon)}\left(g_N(w)-\exp\bigl(N\Phi(w)\bigr)\right)\,dw
  \right|
  \\
  \le&
  \frac{K_{1,\pm}}{N}
  +
  \frac{K_2\log(N)}{N}
  \exp\bigl(N\Re\Phi(w_0)\bigr).
\end{split}
\end{equation*}
From \eqref{eq:Phi_saddle} we have
\begin{equation*}
\begin{split}
  &\lim_{N\to\infty}
  \left|
    \frac{G_{\pm}(N,\varepsilon)}
         {\pm\sqrt{-1}
          \int_{C_{\pm}(\varepsilon)}\exp\bigl(N\Phi(w)\bigr)\,dw}
  -1
  \right|
  \\
  \le&
  \frac{K_{1,\pm}}
       {N
        \left|
          \int_{C_{\pm}(\varepsilon)}\exp\bigl(N\Phi(w)\bigr)\,dw
        \right|}
  +
  \frac{K_2\log(N)}{N}\times
  \frac{\exp\bigl(N\Re\Phi(w_0)\bigr)}
       {\left|
          \int_{C_{\pm}(\varepsilon)}\exp\bigl(N\Phi(w)\bigr)\,dw
        \right|}
  \\
  \underset{N\to\infty}{\longrightarrow}&0.
\end{split}
\end{equation*}
Here we use the following lemma which will be proved in Section~\ref{lem:Phi_0}.
\begin{lem}\label{lem:Phi_0}
The real part of $\Phi(w_0)$ is positive for $0<u<\log\bigl((3+\sqrt{5})/2\bigr)$.
Therefore from \eqref{eq:Phi_saddle} we see that $\int_{C_{\pm}(\varepsilon)}\exp\bigl(N\Phi(w)\bigr)\,dw$ grows exponentially.
\end{lem}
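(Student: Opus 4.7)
My plan is to exploit the purely imaginary nature of $\varphi(u)$ (Remark~\ref{rem:varphi}) to reduce the claim to a real one-variable inequality, then verify it by comparing boundary values with a monotonicity argument.

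Write $\varphi(u)=\sqrt{-1}\beta(u)$ with $\beta(u)\in(-\pi/3,0)$. Then $e^{u-\varphi(u)}$ and $e^{u+\varphi(u)}$ are complex conjugates of common modulus $e^u$ with nonzero arguments $\mp\beta\in(0,\pi/3]$, so they avoid the branch cut $[1,\infty)$ of $\Li_2$ for $u\in(0,u_*)$, where $u_*:=\log((3+\sqrt5)/2)$. Using $\overline{\Li_2(e^{u-\varphi})}=\Li_2(e^{u+\varphi})$ together with $u\tilde\varphi(u)=\sqrt{-1}u(\beta+2\pi)$, formula \eqref{eq:Phi_0} becomes $\xi\Phi(w_0)=\sqrt{-1}\bigl(s(u)-2\pi u\bigr)$, where $s(u):=2\Im\Li_2\bigl(e^{u-\varphi(u)}\bigr)-u\beta(u)$. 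Dividing by $\xi=u+2\pi\sqrt{-1}$ and taking the real part gives
\[
  \Re\Phi(w_0)=\frac{2\pi\bigl(s(u)-2\pi u\bigr)}{u^2+4\pi^2},
\]
so the claim reduces to proving $s(u)>2\pi u$ on $(0,u_*)$. The boundary values are $s(0)=2\,\mathrm{Cl}_2(\pi/3)=\Vol(E)>0$, and, using the standard boundary identity $\Im\Li_2(x+\sqrt{-1}0^+)=\pi\log x$ for $x>1$ (which follows from the inversion relation $\Li_2(z)+\Li_2(1/z)=-\pi^2/6-\frac{1}{2}\log^2(-z)$), $s(u)\to 2\pi u_*$ as $u\to u_*^-$. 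Thus $s(u)-2\pi u$ starts at $\Vol(E)>0$ and decays to $0$ at the right endpoint; it suffices to prove strict monotonic decrease.

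For monotonicity, I would differentiate $L(u):=\xi\Phi(w_0(u))$ in $u$. The contributions of $dw_0/du$ vanish because $d\Phi/dw\big|_{w_0}=0$. Using the saddle-point equation $\log(1-e^{u-\varphi})+\log(1-e^{u+\varphi})=u$ together with the key algebraic identity
\[
  \bigl(1-e^{u-\varphi(u)}\bigr)\bigl(1-e^{u+\varphi(u)}\bigr)=1-2e^u\cos\beta+e^{2u}=e^u
\]
(which follows from $\cos\beta=\cosh u-1/2$ and implies $|1-e^{u-\varphi}|=e^{u/2}$), a direct calculation simplifies the derivative to
\[
  \frac{d}{du}\bigl(s(u)-2\pi u\bigr)=-\bigl(2\alpha(u)+\beta(u)+2\pi\bigr),
\]
where $\alpha(u):=\arg(1-e^{u-\varphi(u)})$ is chosen continuously in $u$ with $\alpha(0)=-\pi/3$ and $\alpha(u_*)=-\pi$.

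The expected main obstacle is the strict positivity $2\alpha(u)+\beta(u)+2\pi>0$ on the open interval. The endpoint values are $\pi$ and $0$ respectively, and a local expansion near $u_*$ using $\beta\sim-5^{1/4}(u_*-u)^{1/2}$ and $\alpha+\pi\sim\frac{\sqrt5+1}{2}|\beta|$ yields $2\alpha+\beta+2\pi\sim\sqrt5\,|\beta|>0$. For the interior, the explicit relations $e^{u/2}\cos\alpha=1-e^u\cos\beta$ and $e^{u/2}\sin\alpha=e^u\sin\beta$ (from $|1-e^{u-\varphi}|=e^{u/2}$) allow one to eliminate $\alpha$ in favor of $(u,\beta)$ and verify by elementary calculus that $2\alpha+\beta+2\pi$ remains strictly positive throughout. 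Together with the boundary values, this shows that $s-2\pi u$ is strictly decreasing from $\Vol(E)$ to $0$, hence strictly positive on $(0,u_*)$, which gives the lemma.
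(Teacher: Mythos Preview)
Your strategy is the same as the paper's: observe that $\xi\Phi(w_0)$ is purely imaginary, reduce to showing its imaginary part is positive, differentiate in $u$, and pin down the sign of the derivative together with the endpoint values. Your derivative formula $-(2\alpha+\beta+2\pi)$ is correct and equivalent to the paper's, and your boundary analysis at $u=0$ and $u\to u_*^-$ is fine.

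The one place where your argument is left genuinely unfinished is the interior inequality $2\alpha(u)+\beta(u)+2\pi>0$, which you defer to ``elementary calculus'' after eliminating $\alpha$ via $e^{u/2}\cos\alpha=1-e^u\cos\beta$, $e^{u/2}\sin\alpha=e^u\sin\beta$. The paper avoids this work by applying the dilogarithm inversion formula \emph{before} differentiating, replacing $\Li_2(e^{u+\varphi})$ by $\Li_2(e^{-u-\varphi})$ (whose argument has modulus $e^{-u}<1$). The resulting derivative is $2\arg(1-e^{-u-\varphi})+\Im\varphi(u)$, and since $1-e^{-u-\varphi}$ lies in the open fourth quadrant for $0<u<u_*$ while $\Im\varphi(u)=\beta\in(-\pi/3,0)$, both terms are strictly negative and monotonicity is immediate. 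In your notation this is nothing more than the identity
\[
  1-e^{-u-\varphi}=-e^{-u-\varphi}\bigl(1-e^{u+\varphi}\bigr)
  =e^{-u/2}\,e^{\,i(-\pi-\alpha-\beta)},
\]
so $\arg(1-e^{-u-\varphi})=-\pi-\alpha-\beta$ (principal values), and hence
\[
  2\alpha+\beta+2\pi=-\bigl(2\arg(1-e^{-u-\varphi})+\beta\bigr)>0
\]
on $(0,u_*)$. Thus the ``elementary calculus'' step you anticipated is in fact a one-line consequence of inversion; inserting this observation makes your argument complete and essentially identical to the paper's.
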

So we have
\begin{equation*}
  G_{\pm}(N,\varepsilon)
  \underset{N\to\infty}{\sim}
  \pm\sqrt{-1}
  \int_{C_{\pm}(\varepsilon)}\exp\bigl(N\Phi(w)\bigr)\,dw.
\end{equation*}
\par
Therefore from \eqref{eq:G} we have
\begin{equation*}
\begin{split}
  &J_N\bigl(K;\exp(\xi/N)\bigr)
  \\
  \underset{N\to\infty}{\sim}&
  \frac{e^{2\pi\sqrt{-1}u N/\xi}}{e^u-1}
  \frac{N e^{u/2}}{2}
  \left(
    \int_{C_{-}(\varepsilon)}\exp(N\Phi(w))\,dw
    -
    \int_{C_{+}(\varepsilon)}\exp(N\Phi(w))\,dw
  \right)
  \\
  =&
  \frac{N e^{2\pi\sqrt{-1}u N/\xi}}{2\sinh(u/2)}
  \left(
    \int_{C_{-}(\varepsilon)}\exp(N\Phi(w))\,dw
  \right)
\end{split}
\end{equation*}
from the lemma below.
\begin{lem}\label{lem:S_gamma}
For $\gamma=(2\pi-\sqrt{-1}u)/(2N)$ with positive $u$, we have
\begin{equation*}
  \frac{S_{\gamma}(-\pi-\sqrt{-1}u+\gamma)}{S_{\gamma}(\pi-\sqrt{-1}u-\gamma)}
  =
  \frac{e^{\pi u/\gamma}-1}{e^u-1}.
\end{equation*}
Therefore we have the following asymptotic equivalence:
\begin{equation*}
  \frac{S_{\gamma}(-\pi-\sqrt{-1}u+\gamma)}{S_{\gamma}(\pi-\sqrt{-1}u-\gamma)}
  \underset{N\to\infty}{\sim}
  \frac{e^{2\pi\sqrt{-1}u N/\xi}}{e^u-1}.
\end{equation*}
\end{lem}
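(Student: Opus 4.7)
The plan is to derive the identity from two applications of shift-identities for the quantum dilogarithm and then extract the asymptotic. First I would establish a companion to \eqref{eq:S_gamma} that exploits the $\pi\leftrightarrow\gamma$ symmetry of the integrand $e^{zt}/(\sinh(\pi t)\sinh(\gamma t))$ in the definition of $S_\gamma$. Repeating, mutatis mutandis, the contour-shift argument that proves \eqref{eq:S_gamma}, with $\sinh(\pi t)$ and $\sinh(\gamma t)$ interchanged, one obtains
\begin{equation*}
  \bigl(1 + e^{\sqrt{-1}z\pi/\gamma}\bigr)\, S_\gamma(z+\pi) \;=\; S_\gamma(z-\pi),
\end{equation*}
valid (by analytic continuation of both sides) wherever the ratio of the two $S_\gamma$-values is meromorphic.

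Next I would plug carefully chosen values of $z$ into the two functional equations. In the companion identity, take $z=-\sqrt{-1}u+\gamma$; since $\sqrt{-1}z\pi/\gamma = \pi u/\gamma + \sqrt{-1}\pi$, one has $e^{\sqrt{-1}z\pi/\gamma}=-e^{\pi u/\gamma}$, so
\begin{equation*}
  \bigl(1-e^{\pi u/\gamma}\bigr)\,S_\gamma(\pi-\sqrt{-1}u+\gamma) \;=\; S_\gamma(-\pi-\sqrt{-1}u+\gamma).
\end{equation*}
In the original equation \eqref{eq:S_gamma}, take $z=\pi-\sqrt{-1}u$; this is at the boundary of the region of validity $|\Re(z)|<\pi$, but neither side has a pole there (as can be checked from the pole locations of $S_\gamma$, which lie at $(2k+1)\gamma+(2l+1)\pi$), so analytic continuation is legal. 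With $e^{\sqrt{-1}z}=-e^u$ this gives
\begin{equation*}
  \bigl(1-e^u\bigr)\,S_\gamma(\pi-\sqrt{-1}u+\gamma) \;=\; S_\gamma(\pi-\sqrt{-1}u-\gamma).
\end{equation*}
Dividing one identity by the other cancels $S_\gamma(\pi-\sqrt{-1}u+\gamma)$ and yields
\begin{equation*}
  \frac{S_\gamma(-\pi-\sqrt{-1}u+\gamma)}{S_\gamma(\pi-\sqrt{-1}u-\gamma)} \;=\; \frac{1-e^{\pi u/\gamma}}{1-e^u} \;=\; \frac{e^{\pi u/\gamma}-1}{e^u-1},
\end{equation*}
which is the first assertion.

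For the asymptotic statement, I would rewrite $2\pi-\sqrt{-1}u=-\sqrt{-1}\xi$, so that $\pi/\gamma=2\pi N/(2\pi-\sqrt{-1}u)=2\pi\sqrt{-1}N/\xi$ and therefore $e^{\pi u/\gamma}=e^{2\pi\sqrt{-1}uN/\xi}$. A direct computation of the real part of $2\pi\sqrt{-1}u/\xi$ gives $4\pi^2 u/(4\pi^2+u^2)>0$ for $u>0$, so $|e^{\pi u/\gamma}|\to\infty$ exponentially in $N$; hence $e^{\pi u/\gamma}-1\sim e^{\pi u/\gamma}$ and the asymptotic equivalence follows immediately from the exact identity.

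The main obstacle is Step 1: the dual functional equation is not recorded explicitly in the paper, so one has to redo the contour manipulation that produces \eqref{eq:S_gamma} but with the roles of $\pi$ and $\gamma$ swapped, and then justify its use at the specific boundary value $z=-\sqrt{-1}u+\gamma$ by analytic continuation. Once this identity is in hand, the rest of the proof is a two-line algebraic cancellation plus an elementary estimate on $\Re(1/\xi)$.
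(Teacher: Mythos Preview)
Your argument is correct, but it proceeds quite differently from the paper's proof. The paper goes back to the integral definition and writes
\[
  \log\frac{S_{\gamma}(-\pi-\sqrt{-1}u+\gamma)}{S_{\gamma}(\pi-\sqrt{-1}u-\gamma)}
  =
  \frac{1}{2}\int_{C_R}\frac{e^{-\sqrt{-1}ut}\coth(\pi t)}{t}\,dt
  -
  \frac{1}{2}\int_{C_R}\frac{e^{-\sqrt{-1}ut}\coth(\gamma t)}{t}\,dt,
\]
then evaluates each integral $\int_{C_R}e^{-\sqrt{-1}ut}\coth(\kappa t)\,dt/t$ by closing the contour in the lower half-plane and summing the residues at $t=k\pi\sqrt{-1}/\kappa$, $k\le -1$, to get $-2\log(1-e^{u\pi/\kappa})$. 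Most of the labour in the paper goes into the contour estimates showing that the contributions of the three auxiliary segments $U_1,U_2,U_3$ vanish as $r\to\infty$.

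Your route instead exploits the $\pi\leftrightarrow\gamma$ symmetry of the defining integrand to obtain the dual shift relation $(1+e^{\sqrt{-1}z\pi/\gamma})S_\gamma(z+\pi)=S_\gamma(z-\pi)$, and then the lemma drops out by dividing one shift identity by the other. This is the cleaner and more conceptual argument: it makes transparent that the two factors $1-e^{u}$ and $1-e^{\pi u/\gamma}$ come from the two periods of $S_\gamma$. The cost is that the dual identity is not stated in the paper, so you must either redo the residue argument with $\pi$ and $\gamma$ swapped or cite it from the quantum-dilogarithm literature; and you must invoke analytic continuation at the boundary points $z=\pi-\sqrt{-1}u$ and $z=-\sqrt{-1}u+\gamma$, where the naive domain conditions $|\Re z|<\pi$ and $|\Re z|<\Re\gamma$ fail. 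You have flagged both issues correctly. The asymptotic part of your argument matches the paper's and is fine.
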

A proof of the lemma is given in Section~\ref{sec:S_gamma}.
\begin{rem}\label{rem:S}
When $u=0$, we have
\begin{equation*}
  \frac{S_{\gamma}(-\pi+\gamma)}{S_{\gamma}(\pi-\gamma)}
  =
  N
\end{equation*}
from \cite[P.~492]{Andersen/Hansen:JKNOT2006}.
\end{rem}
\par
Since
\begin{equation*}
  \frac{d^2\,\Phi(w_0)}{d\,w^2}
  =
  \xi\sqrt{(e^u+e^{-u}+1)(e^u+e^{-u}-3)},
\end{equation*}
we have the following asymptotic equivalence from \eqref{eq:Phi_saddle}:
\begin{equation*}
\begin{split}
  \int_{C_{-}(\varepsilon)}\exp(N\Phi(w))\,dw
  \underset{N\to\infty}{\sim}
  \frac{\sqrt{2\pi}\exp(N\Phi(w_0))}
  {\sqrt{N}\sqrt{-\xi\sqrt{(e^u+e^{-u}+1)(e^u+e^{-u}-3)}}},
\end{split}
\end{equation*}
where we choose the square root of $\sqrt{-\xi\sqrt{(e^u+e^{-u}+1)(e^u+e^{-u}-3)}}$ so that it is in the fourth quadrant from \eqref{eq:argument}.
\par
Therefore we finally have
\begin{equation*}
\begin{split}
  &J_N\bigl(E;\exp(\xi/N)\bigr)
  \\
  \underset{N\to\infty}{\sim}&
  \frac{N e^{2\pi\sqrt{-1}u N/\xi}}{2\sinh(u/2)}
  \frac{\sqrt{2\pi}}{\sqrt{N}\sqrt{-\xi\sqrt{(e^u+e^{-u}+1)(e^u+e^{-u}-3)}}}
  \\
  &
  \times
  \exp
  \left(
    \frac{N}{\xi}
    \left(
      \Li_2\left(e^{u-\varphi(u)}\right)
      -
      \Li_2\left(e^{u+\varphi(u)}\right)
      -
      u\tilde{\varphi}(u)
    \right)
  \right)
  \\
  =&
  \frac{\sqrt{\pi}}{2\sinh(u/2)}
  \sqrt{
    \frac{-2}{\sqrt{(e^u+e^{-u}+1)(e^u+e^{-u}-3)}}
       }
  \sqrt{\frac{N}{\xi}}
  \exp\left(\frac{N}{\xi}S(u)\right).
\end{split}
\end{equation*}
Here we put
\begin{equation*}
  S(u)
  :=
  \Li_2\left(e^{u-\varphi(u)}\right)
  -
  \Li_2\left(e^{u+\varphi(u)}\right)
  -
  u\varphi(u).
\end{equation*}
\begin{rem}
When $u=0$, we have
\begin{equation*}
\begin{split}
  \int_{C_{-}(\varepsilon)}\exp(N\Phi(w))\,dw
  \underset{N\to\infty}{\sim}
  \frac{\exp(N\Phi(w_0))}
  {\sqrt{N}3^{1/4}}.
\end{split}
\end{equation*}
Since $w_0=5/6$ in this case we have
\begin{equation*}
\begin{split}
  \Phi(w_0)
  &=
  \frac{1}{2\pi\sqrt{-1}}
  \left(
    \Li_2(e^{-5\pi\sqrt{-1}/3})-\Li_2(e^{5\pi\sqrt{-1}/3})
  \right)
  \\
  &=
  \frac{6\sqrt{-1}\Lambda(\pi/3)}{2\pi\sqrt{-1}}
  =
  \frac{\Vol(E)}{2\pi}.
\end{split}
\end{equation*}
Therefore from Remark~\ref{rem:S}, we have Theorem~\ref{thm:Andersen/Hansen}.
\end{rem}
\section{Topological interpretations of $S(u)$ and $T(u)$}
\label{sec:interpretation}
In this section we describe topological interpretations of $S(u)$ and $T(u)$.
\subsection{Representation}
Let $x$ and $y$ be the Wirtinger generators of the fundamental group $\pi_1(S^3\setminus{E})$ (with a base point above the paper) of the complement of the figure-eight knot $E$ depicted in Figure~\ref{fig:fig8_pi1}.
\begin{figure}[h]
  \includegraphics[scale=0.3]{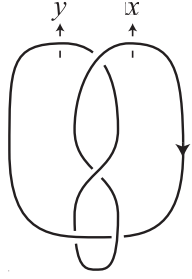}
  \caption{Generators of the fundamental group of the complement of the figure-eight knot}
  \label{fig:fig8_pi1}
\end{figure}
The group $\pi_1(S^3\setminus{E})$ has the following presentation:
\begin{equation*}
  \pi_1(S^3\setminus{E})
  =
  \langle
    x,y
    \mid
    xy^{-1}x^{-1}yx=yxy^{-1}x^{-1}y
  \rangle.
\end{equation*}
Due to \cite{Riley:QUAJM31984} any non-abelian representation $\rho$ of $\pi_1(S^3\setminus{E})$ into $SL(2;\C)$ is, up to conjugation, given as follows:
\begin{align*}
  \rho(x)&
  :=
  \begin{pmatrix}
    m^{1/2} & 1 \\
    0       & m^{-1/2}
  \end{pmatrix},
  \\
  \rho(y)
  &:=
  \begin{pmatrix}
    m^{1/2} & 0 \\
    -d      & m^{-1/2}
  \end{pmatrix},
\end{align*}
where
\begin{equation*}
  d=
  \frac{1}{2}
  \left(
    m+m^{-1}-3\
    \pm
    \sqrt{(m+m^{-1}+1)(m+m^{-1}-3)}
  \right).
\end{equation*}
Since the longitude $\lambda$ is given by $xy^{-1}xyx^{-2}yxy^{-1}x^{-1}$ if we read it off from the top right, we have
\begin{equation*}
  \rho(\lambda)
  =
  \begin{pmatrix}
    \ell(m)^{\pm1}& (m^{1/2}+m^{-1/2})\sqrt{(m+m^{-1}+1)(m+m^{-1}-3)} \\
    0             & \ell(m)^{\mp1}
  \end{pmatrix},
\end{equation*}
where
\begin{equation*}
  \ell(m)
  :=
  \frac{m^2-m-2-m^{-1}+m^{-2}}{2}
  +
  \frac{(m-m^{-1})}{2}\sqrt{(m+m^{-1}+1)(m+m^{-1}-3)}.
\end{equation*}
See also \cite[Section~3.1]{Murakami:ACTMV2008}.
\par
Let $\rho_u$ be the representation given by putting $m:=e^u$.
We introduce a parameter $v$ so that $\ell(e^u)=e^{v/2}$.
Since we assume that $0<u<\log\bigl((3+\sqrt{5})/2\bigr)$, we have $2<e^u+e^{-u}<3$.
Therefore we have
\begin{equation*}
  |\ell(e^u)|^2
  =
  \frac{1}{4}
  (e^{2u}-e^u-2-e^{-u}+e^{-2u})^2
  -
  \frac{1}{4}
  (e^u-e^{-u})^2(e^u+e^{-u}+1)(e^u+e^{-u}-3)
  =
  1
\end{equation*}
and so $v$ is purely imaginary.
\par
The representation $\rho_u$ gives an incomplete hyperbolic structure to $S^3\setminus{E}$ and its completion is the generalized Dehn surgery \cite{Thurston:GT3M} with parameter $(p,q)$ with $pu+qv=2\pi\sqrt{-1}$.
Since $v$ is purely imaginary $p=0$ and $q=2\pi\sqrt{-1}/v$.
Therefore the completion is a cone manifold whose underlying space is the $0$-surgery of $S^3$ along the figure-eight knot with singularity the core of the surgery and with cone angle $\alpha=\Im(v)=v/\sqrt{-1}$.
Note that when $u=0$, the cone angle is $2\pi$ and when $u=\log\bigl((3+\sqrt{5})/2\bigr)$, the cone angle is $0$.
See \cite{Hilden/Lozano/Montesinos:JMASU1995} for more details about the geometric structure of this manifold.
\par
In the following two subsections we will calculate the Reidemeister torsion and the Chern--Simons invariant associated with $\rho_u$.
\subsection{Reidemeister torsion}
From \cite[P.~113]{Porti:MAMCAU1997} (see also \cite[\S~6.3]{Dubois:CANMB2006}) the cohomological Reidemeister torsion $T^{E}_{\lambda}(\rho_u)$ associated with the {\em longitude} $\lambda$ twisted by the adjoint action of the representation $\rho_u$ is given by
\begin{equation*}
  T^{E}_{\lambda}(\rho_u)
  =
  \frac{1}{\sqrt{17+4\Tr\bigl(\rho_u(\lambda)\bigr)}}
  =
  \frac{1}{2m+2m^{-1}-1}
\end{equation*}
up to sign, where $\Tr$ means the trace.
Note that since in \cite{Porti:MAMCAU1997} Porti uses homological Reidemeister torsion, we need to take the inverse.
\par
From \cite[Th{\'e}or{\`e}me~4.1]{Porti:MAMCAU1997} the Reidemeister torsion $T^{E}_{\mu}(\rho_u)$ associated with the {\em meridian} $\mu$ is given by 
\begin{equation*}
  T^{E}_{\mu}(\rho_u)
  =
  \pm\frac{\partial\, v}{\partial\, u}T^{E}_{\lambda}(\rho_u).
\end{equation*}
Since $\ell(e^u)=e^{v/2}$, we have
\begin{equation*}
  \pm
  T^{E}_{\mu}(\rho_u)
  =
  \frac{\partial\,\bigl(2\log\ell(e^{u})\bigr)}{\partial\, u}
  \times
  \frac{1}{2e^u+2e^{-u}-1}
  =
  \frac{2}{\sqrt{(e^u+e^{-u}+1)(e^{u}+e^{-u}-3)}}.
\end{equation*}
Therefore $T(u)$ that appears in Theorem~\ref{thm:main} coincides with $T^{E}_{\mu}(\rho_u)$ up to sign.
\subsection{Chern--Simons invariant}
Let $M$ be a closed three-manifold and $\rho\colon\pi_1(M)\to\SL(2;\C)$ a representation.
Then the Chern--Simons invariant $\cs_{M}(\rho)$ is defined as
\begin{equation*}
  \frac{1}{8\pi^2}\int_{M}\Tr(A\wedge dA+\frac{2}{3}A\wedge A\wedge A)
  \in\C/\Z,
\end{equation*}
where $A$ is the $\mathfrak{sl}(2;\C)$-valued $1$-form on $M$ with $dA+A\wedge A=0$ such that $\rho$ is given as the holonomy representation induced by the flat connection on $M\times\SL(2;\C)$ defined by $A$ .
\par
In \cite{Kirk/Klassen:COMMP93} Kirk and Klassen defined the $\SL(2;\C)$ Chern--Simons invariant $\cs_M(\rho)$ for a three-manifold with boundary.
It is a triple $[\alpha,\beta;z]$ of complex numbers modulo the following relation:
\begin{equation*}
  [\alpha,\beta;z]
  =
  [\alpha+1,\beta;z\exp(-8\pi\sqrt{-1}\beta)]
  =
  [\alpha,\beta+1;z\exp(8\pi\sqrt{-1}\alpha)]
  =
  [-\alpha,-\beta;z].
\end{equation*}
For $i=1,2$, let $M_i$ be a three-manifold with boundary $\partial{M_i}$ a torus and put $M:=M_1\bigcup_{\partial{M_1}=-\partial{M_2}}M_2$.
For a representation $\rho\colon\pi_1(M)\to SL(2;\C)$, put $\rho_i:=\rho\bigr|_{M_i}$.
If $\cs_{M_i}(\rho_i)=[\alpha,\beta;z_i]$ then  $\cs_M(\rho)$ is given by $z_1z_2$.
\par
Define the Chern--Simons invariant $\CS_u(K)$ for a knot $K$ to be
\begin{equation*}
  \cs_{S^3\setminus{E}}(\rho_u)
  =
  \left[
    \frac{u}{4\pi\sqrt{-1}},
    \frac{v}{4\pi\sqrt{-1}};
    \exp
    \left(
      \frac{2}{\pi\sqrt{-1}}\CS_{u}(K)
    \right)
  \right].
\end{equation*}
Then as described in \cite{Hikami/Murakami:Bonn} we have
\begin{equation*}
   \CS_{u}(E)
   =
   S(u)-\pi\sqrt{-1}u-\frac{uv}{4}.
\end{equation*}
Note that we are using the $\operatorname{PSL}(2;\C)$ normalization of the Chern--Simons invariant \cite[P.~543]{Kirk/Klassen:COMMP93}.
So the function $f(u)$ in \cite{Neumann/Zagier:TOPOL85} is $-\CS_u(E)/4$ (up to a constant) and the function $f(u)$ in \cite[P.~543]{Kirk/Klassen:COMMP93} and \cite{Yoshida:INVEM85} is $-2\sqrt{-1}\CS_u(E)/\pi$.
\section{Calculation of $S_{\gamma}$}
\label{sec:S_gamma}
In this section we first show the analyticity of $S_{\gamma}$ and then calculate its special values.
\begin{lem}\label{lem:analyticity}
If a complex number $\gamma$ has the positive real part, then $S_{\gamma}(z)$ is an analytic function in $\{z\in\C\mid|\Re(z)|<\pi+\Re(\gamma)\}$.
\end{lem}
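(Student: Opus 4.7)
The plan is to establish analyticity of $S_\gamma$ by showing that the integral in its definition,
\[
  I(z):=\int_{C_R}\frac{e^{zt}}{\sinh(\pi t)\sinh(\gamma t)}\frac{dt}{t},
\]
converges absolutely and uniformly on compact subsets of the strip $\Sigma:=\{z\in\C\mid|\Re(z)|<\pi+\Re(\gamma)\}$. Since for each fixed $t\in C_R$ the integrand is entire in $z$, the standard theorem on holomorphic dependence on a parameter (equivalently, Morera combined with Fubini) will then give that $I$ is holomorphic on $\Sigma$, whence so is $S_\gamma(z)=\exp(I(z)/4)$.

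First I would verify that the integrand is regular on the contour $C_R$. Its $t$-poles lie at $\sqrt{-1}\Z\setminus\{0\}$ from $\sinh(\pi t)$, at $(\pi\sqrt{-1}/\gamma)\Z\setminus\{0\}$ from $\sinh(\gamma t)$, and at $t=0$ (a triple pole of $1/(t\sinh(\pi t)\sinh(\gamma t))$). The hypothesis $R<\min\{1,\pi/|\gamma|\}$ places the semicircle $\Omega_R$ strictly inside the open disk bounded by the nearest nonzero singularities, and the rays $(-\infty,-R]\cup[R,\infty)$ avoid $\sqrt{-1}\Z\cup(\pi\sqrt{-1}/\gamma)\Z$ entirely, so $C_R$ is disjoint from the pole set.

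The main obstacle is the tail estimate, and specifically bounding $|\sinh(\gamma t)|$ from below when $\gamma$ is complex (the case $0<\gamma<1$ real being treated in \cite{Andersen/Hansen:JKNOT2006}). This is handled at once by the elementary identity $|\sinh(a+b\sqrt{-1})|^{2}=\sinh^{2}a+\sin^{2}b$, which for real $t$ yields $|\sinh(\gamma t)|\geq\sinh(\Re(\gamma)|t|)$. Combined with $|\sinh(\pi t)|=\sinh(\pi|t|)$ and $|e^{zt}|\leq e^{|\Re(z)||t|}$, this produces a bound
\[
  \left|\frac{e^{zt}}{t\sinh(\pi t)\sinh(\gamma t)}\right|
  \leq
  \frac{C}{|t|}\,e^{-(\pi+\Re(\gamma)-|\Re(z)|)|t|}
  \qquad(|t|\geq R)
\]
with $C=C(R,\gamma)>0$ independent of $z$. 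For any compact $K\subset\Sigma$ there exists $\delta>0$ with $\pi+\Re(\gamma)-|\Re(z)|\geq\delta$ uniformly in $z\in K$, so the integrand on the tails is dominated by the integrable majorant $(C/|t|)e^{-\delta|t|}$ independent of $z\in K$. On the bounded arc $\Omega_R$ the integrand is jointly continuous in $(z,t)$ and therefore uniformly bounded on $K\times\Omega_R$. Together these bounds yield the absolute uniform convergence of $I(z)$ on $K$, completing the proof.
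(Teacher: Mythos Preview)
Your proof is correct and follows the same overall strategy as the paper: bound the integrand on the real tails by an exponentially decaying majorant coming from the growth of $\sinh(\pi t)$ and $\sinh(\gamma t)$, and observe that the semicircular arc causes no trouble. The only real difference is packaging. The paper writes $z=x+\sqrt{-1}y$, $\gamma=a+\sqrt{-1}b$, expands the real and imaginary parts of the integrand explicitly, and bounds each separately; this amounts to working with the denominator $\cos(2bt)-\cosh(2at)$, whose absolute value is $2(\sinh^{2}(at)+\sin^{2}(bt))=2|\sinh(\gamma t)|^{2}$. You reach the same lower bound $|\sinh(\gamma t)|\ge\sinh(\Re(\gamma)\,|t|)$ in one line via the identity $|\sinh(a+b\sqrt{-1})|^{2}=\sinh^{2}a+\sin^{2}b$, which is cleaner. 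You also make explicit the passage from convergence to holomorphy (uniform convergence on compacta plus Morera/Fubini), a step the paper leaves tacit after establishing mere convergence of the improper integral; in that sense your write-up is slightly more complete.
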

\begin{proof}
Put
\begin{equation*}
  L_{\gamma}(t)
  :=
    \frac{e^{zt}}{t\sinh(\pi t)\sinh(\gamma t)}.
\end{equation*}
We will show that the improper integrals $\int_{R}^{\infty}L_{\gamma}t\,dt$ and $\int_{-\infty}^{R}L_{\gamma}t\,dt$ converge.
\par
Putting $z:=x+\sqrt{-1}y$ and $\gamma:=a+\sqrt{-1}b$ for real numbers $x,y,a,b$ with $a>0$ we have
\begin{equation*}
  \Re(L_{\gamma}(t))
  =
  \frac
  {-2e^{xt}\bigl(\cosh(at)\sin(bt)\sin(ty)+\sinh(at)\cos(bt)\cos(yt)\bigr)}
  {t\sinh(\pi t)\bigl(\cos(2bt)-\cosh(2at)\bigr)}
\end{equation*}
and
\begin{equation*}
  \Im(L_{\gamma}(t))
  =
  \frac
  {2e^{xt}\bigl(\cosh(at)\sin(bt)\cos(ty)-\sinh(at)\cos(bt)\sin(yt)\bigr)}
  {t\sinh(\pi t)\bigl(\cos(2bt)-\cosh(2at)\bigr)}
\end{equation*}
for $t\in\R$.
\par
If $t$ is positive and sufficiently large we have
\begin{equation*}
\begin{split}
  |\Re(L_{\gamma}(t))|
  &\le
  \frac{2e^{xt}e^{at}}{t\sinh(\pi t)(\cosh(2at)-1)}
  \\
  &=
  \frac{8e^{(x-\pi-a)t}}{t(1-e^{-2\pi t})(1+e^{-4at}-2e^{-2at})}
\end{split}
\end{equation*}
and similarly we have
\begin{equation*}
  |\Im(L_{\gamma}(t))|
  \le
  \frac{8e^{(x-\pi-a)t}}{t(1-e^{-2\pi t})(1+e^{-4at}-2e^{-2at})}.
\end{equation*}
Therefore the integral $\int_{R}^{\infty}L_{\gamma}t\,dt$ converges since $x<\pi+a$.
\par
If $t$ is negative and $|t|$ is sufficiently large we have
\begin{equation*}
\begin{split}
  |\Re(L_{\gamma}(t))|
  &\le
  \frac{2e^{(x-a)t}}{t\sinh(\pi t)(\cosh(2at)-1)}
  \\
  &=
  \frac{8e^{(x+\pi+a)t}}{t(e^{2\pi t}-1)(e^{4at}+1-2e^{2at})}
\end{split}
\end{equation*}
and similarly we have
\begin{equation*}
  |\Im(L_{\gamma}(t))|
  \le
  \frac{8e^{(x+\pi+a)t}}{t(e^{2\pi t}-1)(e^{4at}+1-2e^{2at})}.
\end{equation*}
Therefore the integral $\int_{-\infty}^{R}L_{\gamma}t\,dt$ converges since $x>-\pi-a$.
\end{proof}
Next we prove Lemma~\ref{lem:S_gamma}.
\begin{proof}[Proof of Lemma~\ref{lem:S_gamma}]
By the definition we have
\begin{equation*}
\begin{split}
  &\frac{S_{\gamma}(-\pi-\sqrt{-1}u+\gamma)}{S_{\gamma}(\pi-\sqrt{-1}u-\gamma)}
  \\
  =&
  \exp
  \left(
    \frac{1}{4}
    \int_{C_R}
    \left(
      \frac{e^{(-\pi-\sqrt{-1}u+\gamma)t}}{\sinh(\pi t)\sinh(\gamma t)}
      -
      \frac{e^{(\pi-\sqrt{-1}u-\gamma)t}}{\sinh(\pi t)\sinh(\gamma t)}
    \right)
    \frac{dt}{t}
  \right)
  \\
  =&
  \exp
  \left(
    \frac{1}{2}
    \int_{C_R}
    e^{-\sqrt{-1}ut}
    \frac{\sinh(-\pi t+\gamma t)}{\sinh(\pi t)\sinh(\gamma t)}
    \frac{dt}{t}
  \right)
  \\
  =&
  \exp
  \left(
    \frac{1}{2}
    \int_{C_R}
    \frac{e^{-\sqrt{-1}ut}\coth(\pi t)}{t}
    \,dt
    -
    \frac{1}{2}
    \int_{C_R}
    \frac{e^{-\sqrt{-1}ut}\coth(\gamma t)}{t}
    \,dt
  \right).
\end{split}
\end{equation*}
We will calculate the integral $\int_{C_R}e^{-\sqrt{-1}ut}\coth(\kappa t)/t\,dt$ for a complex number $\kappa$ with $\Re(\kappa)>0$ and $\Im(\kappa)\le0$.
\par
Put $\kappa:=\alpha-\beta\sqrt{-1}$ with $\alpha>0$ and $\beta\ge0$.
For a positive number $r$, let $U_1$ be the segment connecting $r$ and $r+r'\beta/\alpha-r'\sqrt{-1}$, $U_2$ be the segment connecting $r+r'\beta/\alpha-r'\sqrt{-1}$ and $-r+r'\beta/\alpha-r'\sqrt{-1}$, and $U_3$ be the segment connecting $-r+r'\beta/\alpha-r'\sqrt{-1}$ and $-r$ with $r':=(n+1/2)\pi\alpha/|\kappa|^2$, where $n:=\lfloor{r|\kappa|^2/(\pi\alpha)}\rfloor$.
Here $\lfloor{x}\rfloor$ is the largest integer that does not exceed $x$.
Note that $r-\pi\alpha/(2|\kappa|^2)<r'\le r+\pi\alpha/(2|\kappa|^2)$.
We use $r'$ instead of $r$ just to avoid the poles of $\coth(\kappa t)$.
Then we have
\begin{equation*}
\begin{split}
  &\left|
    \int_{U_1}
    \frac{e^{-\sqrt{-1}ut}}{t}
    \coth(\kappa t)
    \,dt
  \right|
  \\
  \le&
  \int_{0}^{r'}
  \left|
    \frac{e^{-\sqrt{-1}u(r+s\beta/\alpha-s\sqrt{-1})}}
         {r+s\beta/\alpha-s\sqrt{-1}}
  \right|
  \left|
    \coth\bigl((\alpha-\beta\sqrt{-1})(r+s\beta/\alpha-s\sqrt{-1})\bigr)
  \right|
  \,ds
  \\
  \le&
  \frac{1}{r}
  \int_{0}^{r'}
  e^{-us}
  \left|
    \coth\bigl(r\alpha-\sqrt{-1}(s\alpha+r\beta+s\beta^2/\alpha)\bigr)
  \right|
  \,ds
  \\
  \le&
  \frac{1}{r}
  \int_{0}^{r'}
  e^{-us}
  \,ds
  \\
  =&
  \frac{1}{ur}
  (1-e^{-ur'})
  \xrightarrow{r\to\infty}0.
\end{split}
\end{equation*}
Similarly we have
\begin{equation*}
\begin{split}
  &\left|
    \int_{U_3}
    \frac{e^{-\sqrt{-1}ut}}{t}
    \coth(\kappa t)
    \,dt
  \right|
  \\
  \le&
  \int_{0}^{r'}
  \left|
    \frac{e^{-\sqrt{-1}u(-r+s\beta/\alpha-s\sqrt{-1})}}
         {-r+s\beta/\alpha-s\sqrt{-1}}
  \right|
  \left|
    \cosh\bigl((\alpha-\beta\sqrt{-1})(-r+s\beta/\alpha-s\sqrt{-1})\bigr)
  \right|
  \,ds
  \\
  \le&
  \frac{|\kappa|}{r\alpha}
  \int_{0}^{r'}
  e^{-us}
  \left|
    \cosh\bigl(-r\alpha-\sqrt{-1}(s\alpha-r\beta+s\beta^2/\alpha)\bigr)
  \right|
  \,ds
  \\
  \le&
  \frac{|\kappa|}{ur\alpha}
  (1-e^{-ur'})
  \xrightarrow{r\to\infty}0.
\end{split}
\end{equation*}
We also have
\begin{equation*}
\begin{split}
  &\left|
    \int_{U_2}
    \frac{e^{-\sqrt{-1}ut}}{t}
    \coth(\kappa t)
    \,dt
  \right|
  \\
  \le&
  \int_{-r}^{r}
  \left|
    \frac{e^{-\sqrt{-1}u(s+r'\beta/\alpha-r'\sqrt{-1})}}
         {s+r'\beta/\alpha-r'\sqrt{-1}}
  \right|
  \left|
    \coth\bigl((\alpha-\beta\sqrt{-1})(s+r'\beta/\alpha-r'\sqrt{-1})\bigr)
  \right|
  \,ds
  \\
  \le&
  \frac{e^{-ur'}}{r'}
  \int_{-r}^{r}
  \left|
   \coth\bigl(s\alpha-\sqrt{-1}(r'\alpha+s\beta+r'\beta^2/\alpha)\bigr)
  \right|
  \,ds
  \\
  =&
  \frac{e^{-ur'}}{r'}
  \int_{-r}^{r}
  \left|
   \coth
   \left(
     s\alpha-\sqrt{-1}\bigl((n+1/2)\pi+s\beta\bigr)
   \right)
  \right|
  \,ds
  \\
  =&
  \frac{e^{-ur'}}{r'}
  \int_{-r}^{r}
  \left|
    \tanh\bigl(\kappa s\bigr)
  \right|
  \,ds
  \\
  &\text{($\delta:=\max_{-1\le s\le1}|\tanh(\kappa s)|>0$)}
  \\
  \le&
  \frac{e^{-ur'}}{r'}
  \left(
    2\delta
    +
    \int_{-r}^{-1}
    \frac{|e^{\kappa s}-e^{-\kappa s}|}{|e^{\kappa s}+e^{-\kappa s}|}
    \,ds
    +
    \int_{1}^{r}
    \frac{|e^{\kappa s}-e^{-\kappa s}|}{|e^{\kappa s}+e^{-\kappa s}|}
    \,ds
  \right)
  \\
  \le&
  \frac{e^{-ur'}}{r'}
  \left(
    2\delta
    +
    \int_{-r}^{-1}
    \frac{|e^{\kappa s}|+|e^{-\kappa s}|}
         {\bigl||e^{\kappa s}|-|e^{-\kappa s}|\bigr|}
    \,ds
    +
    \int_{1}^{r}
    \frac{|e^{\kappa s}|+|e^{-\kappa s}|}
         {\bigl||e^{\kappa s}|-|e^{-\kappa s}|\bigr|}
    \,ds
  \right)
  \\
  =&
  \frac{2e^{-ur'}}{r'}
  \left(
    \delta
    +
    \int_{1}^{r}|\coth(\alpha s)|\,ds
  \right)
  \\
  =&
  \frac{2e^{-ur'}}{r'}
  \left(
    \delta
    +
    \frac{\log(\sinh(\alpha r))-\log(\sinh(\alpha))}{\alpha}
  \right)
  \xrightarrow{r\to\infty}0.
\end{split}
\end{equation*}
Therefore we have
\begin{equation*}
\begin{split}
  \int_{C_R}\frac{e^{-\sqrt{-1}ut}}{t}\coth(\kappa t)\,dt
  &=
  2\pi\sqrt{-1}
  \sum_{l=1}^{\infty}
  \Res
  \left(
    \frac{e^{-\sqrt{-1}ut}}{t}\coth(\kappa t);
    t=\frac{l\pi\sqrt{-1}}{\kappa}
  \right)
  \\
  &=
  2\pi\sqrt{-1}
  \sum_{l=1}^{\infty}
  \frac{e^{lu\pi/\kappa}}{l\pi\sqrt{-1}}
  \\
  &=
  -2\log(1-e^{u\pi/\kappa})
\end{split}
\end{equation*}
and so we have
\begin{equation*}
  \frac{S_{\gamma}(-\pi-\sqrt{-1}u+\gamma)}{S_{\gamma}(\pi-\sqrt{-1}u-\gamma)}
  =
  \frac{1-e^{u\pi/\gamma}}{1-e^u}.
\end{equation*}
\end{proof}
\section{Proof of Proposition~\ref{prop:4.7}}
\label{sec:prop:4.7}
In this section we follow \cite[Appendix~A]{Andersen/Hansen:JKNOT2006} to show Proposition~\ref{prop:4.7}.
\par
From \cite[\S~4.1]{Andersen/Hansen:JKNOT2006} we have the following integral expression for $|\Re(z)|<\pi$, or $|\Re(z)|=\pi$ and $\Im(z)\ge0$:
\begin{equation*}
  \frac{1}{2\sqrt{-1}}
  \Li_2(-e^{\sqrt{-1}z})
  =
  \frac{1}{4}
  \int_{C_R}\frac{e^{zt}}{t^2\sinh(\pi t)}\,dt.
\end{equation*}
Therefore we have
\begin{equation*}
\begin{split}
  S_{\gamma}(z)
  &=
  \exp
  \left(
    \frac{1}{2\sqrt{-1}\gamma}
    \Li_2(-e^{\sqrt{-1}z})
    +
    I_{\gamma}(z)
  \right)
  \\
  &=
  \exp
  \left(
    \frac{N}{\xi}
    \Li_2(-e^{\sqrt{-1}z})
    +
    I_{\gamma}(z)
  \right),
\end{split}
\end{equation*}
where
\begin{equation*}
  I_{\gamma}(z)
  :=
  \frac{1}{4}
  \int_{C_R}\frac{e^{zt}}{t\sinh(\pi t)}
  \left(
    \frac{1}{\sinh(\gamma t)}-\frac{1}{\gamma t}
  \right)
  \,dt
\end{equation*}
(see \cite[Equation~(4.2)]{Andersen/Hansen:JKNOT2006}).
Note that $I_{\gamma}(z)$ is defined for $z$ with $|\Re(z)|\le\pi$ (\cite[Appendix~A]{Andersen/Hansen:JKNOT2006}).
\par
Then we have from \eqref{eq:g_N_def}
\begin{equation}\label{eq:g_N}
\begin{split}
  &g_N(z)
  \\
  =&
  \exp
  \left(
    \frac{N}{\xi}
    \left(
      \Li_2(-e^{\sqrt{-1}\pi+u-\xi z})
      -
      \Li_2(-e^{-\sqrt{-1}\pi+u+\xi z})
    \right)
    -Nuz
  \right)
  \\
  &\times
  \exp
  \left(
    I_{\gamma}(\pi-\sqrt{-1}u+\sqrt{-1}\xi z)
    -
    I_{\gamma}(-\pi-\sqrt{-1}u-\sqrt{-1}\xi z)
  \right)
  \\
  =&
  \exp\bigl(N\Phi(z)\bigr)
  \exp
  \bigl(
    I_{\gamma}(\pi-\sqrt{-1}u+\sqrt{-1}\xi z)
    -
    I_{\gamma}(-\pi-\sqrt{-1}u-\sqrt{-1}\xi z)
  \bigr).
\end{split}
\end{equation}
\par
We first give an estimation for $|I_{\gamma}(z)|$.
\begin{lem}[see Lemma~3 in \cite{Andersen/Hansen:JKNOT2006}]\label{lem:Lemma3}
If $|\Re(z)|<\pi$, then we have
\begin{equation*}
  |I_{\gamma}(z)|
  \le
  A
  \left(
    \frac{1}{\pi-\Re(z)}+\frac{1}{\pi+\Re(z)}
  \right)
  |\gamma|
  +
  B
  \left(
    1+e^{-\Im(z)R}
  \right)
  |\gamma|
\end{equation*}
If $|\Re(z)|\le\pi$, then we have
\begin{equation*}
  |I_{\gamma}(z)|
  \le
  2A+B(1+e^{-\Im(z)R})|\gamma|.
\end{equation*}
\end{lem}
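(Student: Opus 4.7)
The plan is to split the contour integral defining $I_\gamma(z)$ along the three pieces $(-\infty, -R]$, $\Omega_R$, and $[R, \infty)$ of $C_R$, and to bound each piece using complementary size estimates for $h(w) := 1/\sinh(w) - 1/w$. Since $h$ is analytic at $0$ with $h(w) = -w/6 + O(w^3)$, there is a constant $c_1$ with $|h(w)| \le c_1 |w|$ for $|w| \le 1$; since each of $1/\sinh(w)$ and $1/w$ is $O(1/|w|)$ for large real $|w|$, there is also a constant $c_2$ with $|h(w)| \le c_2/|w|$ for $|w| \ge 1$.

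For the half-line $[R, \infty)$, I would use the bound $|e^{zt}/\sinh(\pi t)| \le 4 e^{(\Re(z) - \pi)t}$ when $t \ge R$ (valid since $\sinh(\pi t) \ge (e^{\pi t} - 1)/2$). Split at $t = 1/|\gamma|$, which may be assumed to exceed $R$. On $[R, 1/|\gamma|]$ apply $|h(\gamma t)| \le c_1 |\gamma| t$, so the integrand is bounded by $4 c_1 |\gamma| e^{(\Re(z) - \pi)t}$, whose integral is at most $4 c_1 |\gamma|/(\pi - \Re(z))$ when $\Re(z) < \pi$. On $[1/|\gamma|, \infty)$ apply $|h(\gamma t)| \le c_2/(|\gamma| t)$, so the integrand is bounded by $4 c_2 e^{(\Re(z) - \pi)t}/(|\gamma| t^2)$, and a direct estimation again yields a contribution of order $|\gamma|/(\pi - \Re(z))$. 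The symmetric treatment of $(-\infty, -R]$ produces the $|\gamma|/(\pi + \Re(z))$ term, together accounting for the first part of the first inequality.

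For the arc $\Omega_R$, observe that it has length $\pi R$ and that $|\gamma t| = |\gamma| R < 1$ on the chosen contour, so $|h(\gamma t)| \le c_1 |\gamma| R$ throughout. Parametrizing $t = R e^{\sqrt{-1}(\pi - s)}$ with $0 \le s \le \pi$ gives $\Re(t) \in [-R, R]$ and $\Im(t) = R \sin s \in [0, R]$, whence $|e^{zt}| = \exp(\Re(z)\Re(t) - \Im(z)\Im(t)) \le e^{\pi R}(1 + e^{-\Im(z) R})$, after separating the cases $\Im(z) \ge 0$ (use $e^{-\Im(z)\Im(t)} \le 1$) and $\Im(z) < 0$ (worst case $\Im(t) = R$). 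The remaining factor $|1/(t \sinh(\pi t))|$ is uniformly bounded on the compact arc, and multiplying yields a bound of the form $B |\gamma|(1 + e^{-\Im(z) R})$, which is the second term in both inequalities.

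The second inequality, valid on the closed strip $|\Re(z)| \le \pi$, requires removing the divergent factors $(\pi \mp \Re(z))^{-1}$. Redo the real half-line analysis with the same split at $t = 1/|\gamma|$: on $[R, 1/|\gamma|]$ the integrand is again bounded by $4 c_1 |\gamma| e^{(\Re(z) - \pi)t}$, and since this is at most $4 c_1 |\gamma|$ pointwise for $\Re(z) \le \pi$, its integral over a subinterval of length at most $1/|\gamma|$ is at most $4 c_1$, independent of $|\gamma|$; on $[1/|\gamma|, \infty)$ the integrand is bounded by $4 c_2/(|\gamma| t^2)$, whose integral over $[1/|\gamma|, \infty)$ equals $4 c_2$, also independent of $|\gamma|$ and uniform in $\Re(z) \le \pi$. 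Applying this on both half-lines furnishes the $|\gamma|$-independent constant $2A$. The main obstacle --- and really the only delicate point --- is this exchange between the small-argument bound $|h(\gamma t)| \le c_1 |\gamma t|$ and the large-argument bound $|h(\gamma t)| \le c_2 / |\gamma t|$, split exactly at $|\gamma t| = 1$, which keeps the real-line contribution finite uniformly as $\Re(z) \to \pm \pi$ where the exponential decay of $e^{(\Re(z) - \pi)t}$ degenerates.
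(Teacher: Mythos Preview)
Your outline matches the paper's proof: both split $C_R$ into the arc $\Omega_R$ and the two real half-lines, use the Taylor bound $|h(w)|\lesssim|w|$ near the origin and a decay bound $|h(w)|\lesssim1/|w|$ far from it, and place the crossover at $|\gamma t|$ of order one (you take $1$; the paper takes some fixed $\delta>0$). The arc is handled by compactness in both arguments, and the two displayed inequalities fall out exactly as you describe.

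One step as written would fail and needs to be said more carefully. The claim that there is a constant $c_2$ with $|h(w)|\le c_2/|w|$ for all $|w|\ge1$ is false: $h(w)=1/\sinh(w)-1/w$ has poles at $w=m\pi\sqrt{-1}$ for every nonzero integer $m$, so no such bound holds uniformly in the complex plane, and the justification ``each of $1/\sinh(w)$ and $1/w$ is $O(1/|w|)$'' does not survive off the real axis. What you actually use is this bound along the single ray $\{\gamma t:t\in\R\}$. Since $\gamma=(2\pi-\sqrt{-1}u)/(2N)$ has $\Re(\gamma)>0$, its argument is a fixed angle in $(-\pi/2,0)$ depending only on $u$ (not on $N$), so the ray stays a definite distance from the imaginary axis and hence from all the poles; on that ray $|w\,h(w)|$ is bounded by a constant depending only on $\arg(\gamma)$. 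The paper makes precisely this point, noting that the resulting constant depends only on the argument of $\gamma$ and hence only on $\xi$. With that amendment your estimates for both the $[R,1/|\gamma|]$ and $[1/|\gamma|,\infty)$ pieces, and the resulting two inequalities, go through unchanged.
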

\begin{proof}
We follow \cite[Appendix~A]{Andersen/Hansen:JKNOT2006}.
\par
Put
\begin{equation*}
  \psi(w)
  :=
  \frac{1}{\sinh(w)}-\frac{1}{w},
\end{equation*}
which is holomorphic in the open disk $D_0(\pi)$ with center $0$ and radius $\pi$.
Note that
\begin{equation}\label{eq:psi}
  \psi(w)
  =
  \frac{w-\sinh(w)}{w\sinh(w)}
  =
  -w
  \frac{h(w)}{k(w)}
\end{equation}
for entire functions $h(w)$ and $k(w)$ with
\begin{align*}
  h(w)&=\sum_{j=0}^{\infty}\frac{w^{2j}}{(2j+3)!}
  \intertext{and}
  k(w)&=\sum_{j=0}^{\infty}\frac{w^{2j}}{(2j+1)!}
\end{align*}
when $|w|<\pi$.
Therefore there exists $\delta>0$ such that $\min_{|w|\le\delta}|\psi(w)/w|=D>0$ since $\lim_{w\to0}\psi(w)/w=1/6$.
\par
Put $C_{\delta}:=[-\delta/|\gamma|,-R]\cup\Omega_R\cup[R,\delta/|\gamma|]$.
Consider the following integrals $I_0(z)$ and $I_1(z)$ so that $I_{\gamma}(z)=I_0(z)+I_1(z)$:
\begin{align*}
  I_0(z)
  &:=
  \frac{1}{4}
  \int_{C_{\delta}}
  \frac{\exp(zt)}{t\sinh(\pi t)}\psi(\gamma t)
  \,dt,
  \\
  I_1(z)
  &:=
  \frac{1}{4}
  \int_{-\infty}^{-\delta/|\gamma|}
  \frac{\exp(zt)}{t\sinh(\pi t)}\psi(\gamma t)
  \,dt
  +
  \frac{1}{4}
  \int_{\delta/|\gamma|}^{\infty}
  \frac{\exp(zt)}{t\sinh(\pi t)}\psi(\gamma t)
  \,dt.
\end{align*}
Since $\lim_{w\to\infty}w\psi(w)=0$, $\psi(w)$ has poles at $w=m\pi\sqrt{-1}$ ($m=\pm1,\pm2,\dots$), and $\Im(\gamma)\ne0$, we have $|\gamma t\psi(\gamma t)|\le E$ for a positive number $E$.
Note that $E$ depends only on the argument of $\gamma$ and so only on $\xi=\gamma\times 2\sqrt{-1}N$.
So we have
\begin{equation*}
  |\psi(\gamma t)|
  \le
  \frac{E}{|\gamma t|}
\end{equation*}
for any $t$.
Therefore we can apply the argument (replacing $\gamma$ there with $|\gamma|/E$ and $a$ with $\delta/|\gamma|$) in \cite[Page~532]{Andersen/Hansen:JKNOT2006} to conclude
\begin{equation*}
  |I_1(z)|
  \le
  \frac{E}{\delta\left(1-e^{-2\pi|\delta|/|\gamma|}\right)}
\end{equation*}
if $|\Re(z)|\le\pi$ and
\begin{equation*}
  |I_1(z)|
  \le
  \frac{E|\gamma|}
       {2\delta^2\left(1-e^{-2\pi\delta/|\gamma|}\right)}
  \left(
    \frac{e^{-(\pi-\Re(z))\delta/|\gamma|)}}{\pi-\Re(z)}
    +
    \frac{e^{-(\pi+\Re(z))\delta/|\gamma|)}}{\pi+\Re(z)}
  \right)
\end{equation*}
if $|\Re(z)|<\pi$.
\par
Next we estimate $I_0(z)$.
Let $M(z,R)$ be the maximum of $\left|\frac{e^{z t}}{t\sinh(\pi t)}\psi(\gamma t)\right|$ for $t\in\Omega_R$.
Then we have
\begin{equation*}
  \left|
    \int_{\Omega_R}
    \frac{\exp(zt)}{t\sinh(\pi t)}\psi(\gamma t)
    \,dt
  \right|
  \le
  \pi R M(z,R).
\end{equation*}
\par
From \eqref{eq:psi} we have
\begin{equation*}
\begin{split}
  M(z,R)
  &=
  \max_{t\in\Omega_R}
  \left|\frac{e^{z t}}{t\sinh(\pi t)}\psi(\gamma t)\right|
  \\
  &=
  \max_{t\in\Omega_R}
  |\gamma|
  \left|\frac{e^{z t}}{\sinh(\pi t)}\right|
  \left|\frac{h(\gamma t)}{k(\gamma t)}\right|
\end{split}
\end{equation*}
since $|\gamma t|=|\gamma|R<\pi$ when $t\in\Omega_R$ and $R<1$.
Then putting $L(R):=\max_{t\in\Omega_R}|h(z)/k(z)|$ and $N(z,R):=\max_{t\in\Omega_R}\left|e^{zt}/(e^{\pi t}-e^{-\pi t})\right|$ we can apply the argument in \cite[Page~533]{Andersen/Hansen:JKNOT2006} to have the following estimation.
\begin{equation*}
  \left|
    \frac{1}{4}
    \int_{\Omega_R}
    \frac{e^{zt}}{t\sinh(\pi t)}\psi(\gamma t)\,dt
  \right|
  \le
  |\gamma|B(1+e^{-\Im(z)R})
\end{equation*}
for a constant $B$ (depending only on $R$).
\par
Now we will estimate the rest of $I_0(z)$.
We have
\begin{equation*}
\begin{split}
  &\left|
    \int_{-\delta/|\gamma|}^{-R}\frac{e^{zt}}{t\sinh(\pi t)}\psi(\gamma t)\,dt
    +
    \int_{R}^{\delta/|\gamma|}\frac{e^{zt}}{t\sinh(\pi t)}\psi(\gamma t)\,dt
  \right|
  \\
  \le&
  \int_{-\delta/|\gamma}^{-R}
  \frac{e^{\Re(z)t}}{|t||\sinh(\pi t)|}|\psi(\gamma t)|\,dt
  +
  \int_{R}^{\delta/|\gamma}
  \frac{e^{\Re(z)t}}{|t||\sinh(\pi t)|}|\psi(\gamma t)|\,dt
  \\
  =&
  \int_{R}^{\delta/|\gamma}
  \frac{e^{\Re(z)t}+e^{-\Re(z)t}}{t\sinh(\pi t)}
  |\psi(\gamma t)|\,dt
  \\
  =&
  2|\gamma|
  \int_{R}^{\delta/|\gamma}
  \frac{e^{\Re(z)t}+e^{-\Re(z)t}}{e^{\pi t}-e^{-\pi t}}
  \left|\frac{\psi(\gamma t)}{\gamma t}\right|\,dt.
\end{split}
\end{equation*}
Since $\psi(w)/w$ is continuous in $D_0(\pi)$ and $\lim_{w\to0}\psi(w)/w=1/6$, there exists $\delta>0$ such that $\min_{|w|<\delta}|\psi(w)/w|=D>0$.
Therefore if $a<\delta/|\gamma|$, then $|\gamma t|\le|\gamma|a<\delta$ in the integral and so we have
\begin{equation*}
\begin{split}
  &\left|
    \int_{-\delta/|\gamma|}^{-R}
    \frac{e^{zt}}{t\sinh(\pi t)}\psi(\gamma t)\,dt
    +
    \int_{R}^{\delta/|\gamma|}
    \frac{e^{zt}}{t\sinh(\pi t)}\psi(\gamma t)\,dt
  \right|
  \\
  \le&
  2|\gamma|D
  \int_{R}^{\delta/|\gamma|}
  \frac{e^{\Re(z)t}+e^{-\Re(z)t}}{e^{\pi t}-e^{-\pi t}}
  \,dt
  \\
  \le&
  \frac{2|\gamma|D}{1-e^{-2\pi R}}
  \int_{R}^{a}(e^{-(\pi-\Re(z))t}+e^{-(\pi+\Re(z))t})
  \,dt.
\end{split}
\end{equation*}
Then from the argument in \cite[Page~533]{Andersen/Hansen:JKNOT2006} we have
\begin{equation*}
\begin{split}
  &\left|
    \int_{-\delta/|\gamma|}^{-R}
    \frac{e^{zt}}{t\sinh(\pi t)}\psi(\gamma t)\,dt
    +
    \int_{R}^{\delta/|\gamma|}
    \frac{e^{zt}}{t\sinh(\pi t)}\psi(\gamma t)\,dt
  \right|
  \\
  \le&
  \frac{4\delta D}{1-e^{-2\pi R}}
\end{split}
\end{equation*}
when $|\Re(z)|\le\pi$, and
\begin{equation*}
\begin{split}
  &\left|
    \int_{-\delta/|\gamma|}^{-R}
    \frac{e^{zt}}{t\sinh(\pi t)}\psi(\gamma t)\,dt
    +
    \int_{R}^{\delta/|\gamma|}
    \frac{e^{zt}}{t\sinh(\pi t)}\psi(\gamma t)\,dt
  \right|
  \\
  \le&
  \frac{2|\gamma|D}{(1-e^{-2\pi R})}
  \left(
    \frac{1-e^{-(\pi-\Re(z))\delta/|\gamma|}}{\pi-\Re(z)}
    +
    \frac{1-e^{-(\pi+\Re(z))\delta/|\gamma|}}{\pi+\Re(z)}
  \right)
\end{split}
\end{equation*}
when $|\Re(z)|<\pi$.
\par
Therefore if $|\Re(z)|\le\pi$, we have
\begin{equation*}
\begin{split}
  I_{\gamma}(z)
  &\le
  \frac{E}{\delta\left(1-e^{-2\pi|\delta|/|\gamma|}\right)}
  +
  |\gamma|B\left(1+e^{\Im(z)R}\right)
  +
  \frac{\delta D}{1-e^{-2\pi R}}
  \\
  &\le
  \frac{1}{1-e^{-2\pi R}}\left(\frac{E}{\delta}+\delta D\right)
  +
  |\gamma|B\left(1+e^{\Im(z)R}\right)
\end{split}
\end{equation*}
since $\delta/|\gamma|\ge R$.
If $|\Re(z)|<\pi$ we also have
\begin{equation*}
\begin{split}
  &|I_{\gamma}(z)|
  \\
  \le&
  \frac{E|\gamma|}
       {2\delta^2\left(1-e^{-2\pi\delta/|\gamma|}\right)}
  \left(
    \frac{e^{-(\pi-\Re(z))\delta/|\gamma|)}}{\pi-\Re(z)}
    +
    \frac{e^{-(\pi+\Re(z))\delta/|\gamma|)}}{\pi+\Re(z)}
  \right)
  \\
  &+
  \frac{|\gamma|D}{2(1-e^{-2\pi R})}
  \left(
    \frac{1-e^{-(\pi-\Re(z))\delta/|\gamma|}}{\pi-\Re(z)}
    +
    \frac{1-e^{-(\pi+\Re(z))\delta/|\gamma|}}{\pi+\Re(z)}
  \right)
  \\
  &+
  |\gamma|B\left(1+e^{\Im(z)R}\right)
  \\
  \le&
  |\gamma|
  \left(
    \frac{1}{\pi-\Re(z)}+\frac{1}{\pi+\Re(z)}
  \right)
  \frac{1}{1-e^{-2\pi R}}
  \left(\frac{E}{2\delta^2}+\frac{D}{2}\right)+
  |\gamma|B\left(1+e^{\Im(z)R}\right).
\end{split}
\end{equation*}
\par
The lemma follows by putting
\begin{equation*}
  A
  :=
  \frac{1}{1-e^{-2\pi R}}
  \times
  \max
  \left\{
    \left(\frac{E}{2\delta}+\frac{\delta D}{2}\right),
    \left(\frac{E}{2\delta^2}+\frac{D}{2}\right)
  \right\}.
\end{equation*}
\end{proof}
Now we prove Proposition~\ref{prop:4.7}.
\par
First note that since $g_N(x)$ has no poles inside $C_{+}(\varepsilon)\cup C_{-}(\varepsilon)$, we can assume that $\varepsilon=0$ without changing the sum, that is,
\begin{equation*}
\begin{split}
  &\int_{C_{+}(\varepsilon)}(\tan(N\pi x)-\sqrt{-1})g_N(x)\,dx
  +
  \int_{C_{-}(\varepsilon)}(\tan(N\pi x)+\sqrt{-1})g_N(x)\,dx
  \\
  =&
  \int_{C_{+}(0)}(\tan(N\pi x)-\sqrt{-1})g_N(x)\,dx
  +
  \int_{C_{-}(0)}(\tan(N\pi x)+\sqrt{-1})g_N(x)\,dx
\end{split}
\end{equation*}
We decompose $C_{+}(0)$ into $C_{+,1}\cup C_{+,2}\cup C_{+,3}$, where $C_{+,1}$ connects $0$ and $-u/(2\pi)+\sqrt{-1}$, $C_{+,2}$ connects $-u/(2\pi)+\sqrt{-1}$ and $1-u/(2\pi)+\sqrt{-1}$, and $C_{+,3}$ connects $1-u/(2\pi)+\sqrt{-1}$ and $1$.
Similarly we also decompose $C_{-}(0)$ in to $C_{-,1}\cup C_{-,2}\cup C_{-,3}$, where $C_{-,1}$ connects $0$ and $u/(2\pi)-\sqrt{-1}$, $C_{-,2}$ connects $u/(2\pi)-\sqrt{-1}$ and $1+u/(2\pi)-\sqrt{-1}$, and $C_{-,3}$ connects $1+u/(2\pi)-\sqrt{-1}$ and $-1$.
Let $I_{\pm,i}(N)$ be the integral of $\bigl(\tan(N\pi x)-\sqrt{-1}\bigr)g_N(x)$ along $C_{\pm,i}$ ($i=1,2,3$).
We will show that $|I_{\pm,i}(N)|$ is bounded from above by $K_{\pm,i}/N$ for a positive constant $K_{\pm,i}$ independent of $N$.
\par
We will give the following estimations for $I_{\pm,i}(N)$.
\begin{equation}\label{eq:I_{+,1}}
  |I_{+,1}(N)|<\frac{K_{+,1}}{N}.
\end{equation}
\begin{equation}\label{eq:I_{+,2}}
  |I_{+,2}(N)|<\frac{K_{+,2}}{N}.
\end{equation}
\begin{equation}\label{eq:I_{+,3}}
  |I_{+,3}(N)|<\frac{K_{+,3}}{N}.
\end{equation}
\begin{equation}\label{eq:I_{-,1}}
  |I_{-,1}(N)|<\frac{K_{-,1}}{N}.
\end{equation}
\begin{equation}\label{eq:I_{-,2}}
  |I_{-,2}(N)|<\frac{K_{-,2}}{N}.
\end{equation}
\begin{equation}\label{eq:I_{-,3}}
  |I_{-,3}(N)|<\frac{K_{-,3}}{N}.
\end{equation}
\begin{proof}[Proof of \eqref{eq:I_{+,1}}]
We first estimate $|\tan(N\pi (-u/(2\pi)+\sqrt{-1})t)-\sqrt{-1}|$.
We have
\begin{equation*}
\begin{split}
  &
  |\tan(N\pi (-u/(2\pi)+\sqrt{-1})t))-\sqrt{-1}|
  \\
  =&
  \left|
    \frac{2e^{-Nut\sqrt{-1}/2-N\pi t)}}
         {e^{-Nut\sqrt{-1}/2-N\pi t)}+e^{Nut\sqrt{-1}/2+N\pi t)}}
  \right|
  \\
  =&
  \frac{2e^{-2N\pi t}}
  {\left|e^{-Nut\sqrt{-1}-2N\pi t}+1\right|}.
\end{split}
\end{equation*}
Since the denominator is bigger than $1$ if $Nut<\pi/2$ we have
\begin{equation*}
  |\tan(N\pi (-u/(2\pi)+\sqrt{-1})t))-\sqrt{-1}|
  <
  2e^{-2N\pi t}
\end{equation*}
when $t<\pi/(2Nu)$.
If $t\ge\pi/(2Nu)$, then the denominator is bigger than equal to $1-e^{-2N\pi t}$.
So we have
\begin{equation*}
  |\tan(N\pi (-u/(2\pi)+\sqrt{-1})t))-\sqrt{-1}|
  \le
  \frac{2e^{-2N\pi t}}{1-e^{-2N\pi t}}
  \le
  \frac{2e^{-2N\pi t}}{1-e^{-\pi^2/u}}
\end{equation*}
when $t\ge\pi/(2Nu)$.
Therefore for any $0\le t\le 1$ we have
\begin{equation}\label{eq:I_{+,1}tan}
  |\tan(N\pi (-u/(2\pi)+\sqrt{-1})t)-\sqrt{-1}|
  <
  \frac{2e^{-2N\pi t}}{1-e^{-\pi^2/u}}.
\end{equation}
\par
So we have
\begin{equation}\label{eq:I_{+,1}integral}
  \left|I_{+,1}(N)\right|
  \le
  \frac{2}{1-e^{-\pi^2/u}}
  \int_{0}^{1}
  e^{-2N\pi t}
  \left|
    g_N\left(\left(-\frac{u}{2\pi}+\sqrt{-1}\right)t\right)
  \right|
  \,dt.
\end{equation}
\par
We estimate $|g_N((-u/(2\pi)+\sqrt{-1})t)|$.
\par
Since the function $g_N$ is not well-defined on the segment $(-u/(2\pi)+\sqrt{-1})t$ (Figure~\ref{fig:contour}), we need to consider the segment $(-u/(2\pi)+\sqrt{-1})t+\varepsilon)$ ($0\le t\le1$) instead for small $\varepsilon$.
(See the argument in \cite[Page~534]{Andersen/Hansen:JKNOT2006}.)
\par
From \eqref{eq:g_N} we have
\begin{equation*}
\begin{split}
  &g_{N}\left(\left(-\frac{u}{2\pi}+\sqrt{-1}\right)t+\varepsilon\right)
  \\
  =&
  \exp
  \left[
    N\Phi\left(\left(-\frac{u}{2\pi}+\sqrt{-1}\right)t+\varepsilon\right)
  \right]
  \\
  &
  \times
  \exp
  \left[
    I_{\gamma}
    \left(
      \pi-\sqrt{-1}u-\sqrt{-1}\frac{|\xi|^2t}{2\pi}+\sqrt{-1}\varepsilon\xi
    \right)
  \right.
  \\
  &\phantom{\times\exp\bigl[}
  \left.
    -
    I_{\gamma}
    \left(
      -\pi-\sqrt{-1}u+\sqrt{-1}\frac{|\xi|^2t}{2\pi}-\sqrt{-1}\varepsilon\xi
    \right)
  \right].
\end{split}
\end{equation*}
\par
From Lemma~\ref{lem:Lemma3}, there exist $A>0$ and $B>0$ such that $|I_{\gamma}(z)|\le 2A+B|\gamma|(1+e^{-\Im(z)R})$.
So we have
\begin{equation}\label{eq:I_{+,1}Phi}
\begin{split}
  &
  \left|
    g_{N}\left(\left(-\frac{u}{2\pi}+\sqrt{-1}\right)t+\varepsilon\right)
  \right|
  \\
  \le&
  \exp
  \left[
    N\Re\Phi\left(\left(-\frac{u}{2\pi}+\sqrt{-1}\right)t+\varepsilon\right)
  \right]
  \frac{\exp\left(2A+B|\gamma|(1+e^{(u+|\xi|^2t/(2\pi)-\varepsilon u)R}\right)}
       {\exp\left(2A+B|\gamma|(1+e^{(u-|\xi|^2t/(2\pi)+\varepsilon u)R}\right)}
  \\
  \le&
  \exp
  \left[
    N\Re\Phi\left(\left(-\frac{ut}{2\pi}+\sqrt{-1}\right)t+\varepsilon\right)
  \right]
  \exp\left(2A+B|\gamma|(1+e^{(u+|\xi|^2/(2\pi)-\varepsilon u)R}\right)
\end{split}
\end{equation}
for $0\le t\le1$.
\par
Now we want to estimate $\Re\Phi\bigl((-ut/(2\pi)+\sqrt{-1})t+\varepsilon\bigr)$.
\par
From the definition we have
\begin{multline*}
  \Phi\left(\left(-\frac{u}{2\pi}+\sqrt{-1}\right)t+\varepsilon\right)
  \\
  =
  \frac{1}{\xi}
  \left(
    \Li_2(e^{u+|\xi|^2t/(2\pi)-\varepsilon\xi})
    -
    \Li_2(e^{u-|\xi|^2t/(2\pi)+\varepsilon\xi})
  \right)
  +\frac{u^2t}{2\pi}-\sqrt{-1}ut-\varepsilon u.
\end{multline*}
Since we may assume that $\Re(u+|\xi|^2t/(2\pi)-\xi\varepsilon)=(1-\varepsilon)u+|\xi|^2t/(2\pi)>0$, there are two cases to consider; the case where $u-|\xi|^2t/(2\pi)<0$ and the case where $u-|\xi|^2t/(2\pi)\ge0$.
\par
If $|z|>1$, it is convenient to replace $\Li_2(z)$ with $\Li_2(z^{-1})$ using the following well-known formula.
\begin{equation}\label{eq:dilog}
  \Li_2(z)+\Li_2(z^{-1})
  =
  -\frac{\pi^2}{6}-\frac{1}{2}\bigl(\log(-z)\bigr)^2,
\end{equation}
where we choose a branch of $\log(-z)$ so that $-\pi<\Im\log(-z)<\pi$.
\begin{itemize}
\item The case where $u-|\xi|^2t/(2\pi)<0$.
We choose $\varepsilon$ small enough so that $u-|\xi|^2t/(2\pi)+\varepsilon u<0$.
From \eqref{eq:dilog} we have
\begin{equation*}
\begin{split}
  &\Phi\left(\left(-\frac{u}{2\pi}+\sqrt{-1}\right)t+\varepsilon\right)
  \\
  =&
  \frac{1}{\xi}
  \left(\vphantom{
          \left(u+\frac{|\xi|^2t}{2\pi}-\varepsilon\xi+\sqrt{-1}\pi\right)^2}
    -
    \Li_2(e^{-u-|\xi|^2t/(2\pi)+\varepsilon\xi})
    -
    \Li_2(e^{u-|\xi|^2t/(2\pi)+\varepsilon\xi})
  \right.
  \\
  &
  \left.\quad
    -\frac{\pi^2}{6}
    -\frac{1}{2}
    \left(
      u+\frac{|\xi|^2t}{2\pi}-\varepsilon\xi+\sqrt{-1}\pi
    \right)^2
  \right)
  \\
  &+\frac{u^2t}{2\pi}-\sqrt{-1}ut-\varepsilon u
  \\
  =&
  \frac{1}{\xi}
  \left(
    -
    \Li_2(e^{u-|\xi|^2t/(2\pi)+\xi\varepsilon})
    -
    \Li_2(e^{u-|\xi|^2t/(2\pi)+\xi\varepsilon})
  \right)
  \\
  &-\frac{\pi^2}{6\xi}
  -\frac{1}{2\xi}
  \left(
    u+\frac{|\xi|^2t}{2\pi}-\varepsilon\xi+\sqrt{-1}\pi
  \right)^2
  \\
  &+\frac{u^2t}{2\pi}-\sqrt{-1}ut-\varepsilon u,
\end{split}
\end{equation*}
where in the first equality we choose the sign of $\sqrt{-1}\pi$ so that $\Im(u+|\xi|^2t/(2\pi)-\varepsilon\xi+\pi\sqrt{-1})=-\varepsilon u+\pi$ is between $-\pi$ and $\pi$.
Since the dilogarithm function $\Li(z)$ is analytic when $\Re(z)<1$, we have
\begin{equation*}
\begin{split}
  &\lim_{\varepsilon\searrow0}
  \Phi\left(\left(-\frac{u}{2\pi}+\sqrt{-1}\right)t+\varepsilon\right)
  \\
  =&
  \frac{1}{\xi}
  \left(
    -
    \Li_2(e^{-u-|\xi|^2t/(2\pi)})
    -
    \Li_2(e^{u-|\xi|^2t/(2\pi)})
  \right)
  \\
  &-\frac{\pi^2}{6\xi}
  -\frac{1}{2\xi}
  \left(
    u+\frac{|\xi|^2t}{2\pi}+\sqrt{-1}\pi
  \right)^2
  +\frac{u^2t}{2\pi}-\sqrt{-1}ut.
\end{split}
\end{equation*}
Since $\Li_2(z)$ is real when $z$ is real and $z<1$, we have
\begin{equation*}
\begin{split}
  &\lim_{\varepsilon\searrow0}
  \Re\Phi\left(\left(-\frac{u}{2\pi}+\sqrt{-1}\right)t+\varepsilon\right)
  \\
  =&
  -\frac{u}{|\xi|^2}
  \left(
    \Li_2(e^{-u-|\xi|^2t/(2\pi)})
    +
    \Li_2(e^{u-|\xi|^2t/(2\pi)})
  \right)
  \\
  &-\frac{u\pi^2}{6|\xi|^2}
  -\frac{u}{2|\xi|^2}
  \left(
    u+\frac{|\xi|^2t}{2\pi}
  \right)^2
  +
  \frac{u\pi^2}{2|\xi|^2}
  -
  \frac{4\pi(u+|\xi|^2t/(2\pi))\pi}{2|\xi|^2}
  +\frac{u^2t}{2\pi}
  \\
  =&
  -\frac{u}{|\xi|^2}
  \left(
    \Li_2(e^{-u-|\xi|^2t/(2\pi)})
    +
    \Li_2(e^{u-|\xi|^2t/(2\pi)})
  \right)
  \\
  &-\frac{5u\pi^2}{6|\xi|^2}
  -\frac{u^3}{2|\xi|^2}
  -\frac{u|\xi|^2t^2}{8\pi^2}
  -\pi t
  \\
  <&0
\end{split}
\end{equation*}
if $u>0$.
\item The case where $u-|\xi|^2t/(2\pi)\ge0$.
In this case we have
\begin{equation*}
\begin{split}
  &\Phi\left(\left(-\frac{u}{2\pi}+\sqrt{-1}\right)t+\varepsilon\right)
  \\
  =&
  \frac{1}{\xi}
  \left(
    \vphantom{\left(\frac{|\xi|^2t}{2\pi}\right)^2}
    -
    \Li_2(e^{-u-|\xi|^2t/(2\pi)+\varepsilon\xi})
    +
    \Li_2(e^{-u+|\xi|^2t/(2\pi)-\varepsilon\xi})
  \right.
  \\
  &\left.\quad
    -\frac{1}{2}
    \left(
      u+\frac{|\xi|^2t}{2\pi}-\varepsilon\xi+\pi\sqrt{-1}
    \right)^2
    +\frac{1}{2}
    \left(
      u-\frac{|\xi|^2t}{2\pi}+\varepsilon\xi-\pi\sqrt{-1}
    \right)^2
  \right)
  \\
  &+
  \frac{u^2t}{2\pi}-\sqrt{-1}ut-u\varepsilon
  \\
  =&
  \frac{1}{\xi}
  \left(
    \vphantom{\left(\frac{|\xi|^2t}{2\pi}\right)^2}
    -
    \Li_2(e^{-u-|\xi|^2t/(2\pi)+\varepsilon\xi})
    +
    \Li_2(e^{-u+|\xi|^2t/(2\pi)-\varepsilon\xi})
  \right)
  \\
  &
  -\frac{2u}{\xi}
  \left(
    \frac{|\xi|^2t}{2\pi}-\varepsilon\xi+\pi\sqrt{-1}
  \right)
  +
  \frac{u^2t}{2\pi}-\sqrt{-1}ut-u\varepsilon
\end{split}
\end{equation*}
and so we have
\begin{equation*}
\begin{split}
  &\lim_{\varepsilon\searrow0}
  \Re\Phi\left(\left(-\frac{u}{2\pi}+\sqrt{-1}\right)t+\varepsilon\right)
  \\
  =&
  \frac{u}{|\xi|^2}
  \left(
    -
    \Li_2(e^{-u-|\xi|^2t/(2\pi)})
    +
    \Li_2(e^{-u+|\xi|^2t/(2\pi)})
  \right)
  \\
  &
  -\Re
  \left(
    \frac{2u}{\xi}
    \left(
      \frac{|\xi|^2t}{2\pi}+\pi\sqrt{-1}
    \right)
  \right)
  +
  \frac{u^2t}{2\pi}  \\
  =&
  \frac{u}{|\xi|^2}
  \left(
    -
    \Li_2(e^{-u-|\xi|^2t/(2\pi)})
    +
    \Li_2(e^{-u+|\xi|^2t/(2\pi)})
  \right)
  -\frac{4\pi^2u}{|\xi|^2}-\frac{u^2t}{2\pi}
  \\
  <&0
\end{split}
\end{equation*}
if $u>0$.
\end{itemize}
Therefore for any $t$ we have $\Re\Phi\bigl((-u/(2\pi)+\sqrt{-1})t+\varepsilon\bigr)\le0$ for small $\varepsilon>0$.
\par
So from \eqref{eq:I_{+,1}integral} and \eqref{eq:I_{+,1}Phi} we have
\begin{equation*}
\begin{split}
  \left|I_{+,1}(N)\right|
  &\le
  \frac{2}{1-e^{-\pi^2/u}}
  \exp\left(2A+B|\gamma|(1+e^{(u+|\xi|^2/(2\pi)-\varepsilon u)R}\right)
  \int_{0}^{1}e^{-2N\pi t}\,dt
  \\
  &=
  \frac{1-e^{-2N\pi}}{N\pi(1-e^{-\pi^2/u})}
  \exp\left(2A+B|\gamma|(1+e^{(u+|\xi|^2/(2\pi))R}\right)
  \\
  &<
  \frac{K_{+,1}}{N}
\end{split}
\end{equation*}
for a positive constant $K_{+,1}$.
\end{proof}
\begin{proof}[Proof of \eqref{eq:I_{-,1}}]
Since $\tan$ is an odd function we have from \eqref{eq:I_{+,1}tan}
\begin{equation}\label{eq:I_{-,1}tan}
  |\tan\bigl(N\pi(u/(2\pi)-\sqrt{-1})t\bigr)+\sqrt{-1}|
  <
  \frac{2e^{-2N\pi t}}{1-e^{-\pi^2/u}}
\end{equation}
for any $0\le t\le 1$.
\par
So we have
\begin{equation}\label{eq:I_{-,1}integral}
  \left|I_{-,1}(N)\right|
  \le
  \frac{2}{1-e^{-\pi^2/u}}
  \int_{0}^{1}
  e^{-2N\pi t}
  \left|
    g_N\left(\left(\frac{u}{2\pi}-\sqrt{-1}\right)t\right)
  \right|
  \,dt.
\end{equation}
\par
We estimate $|g_N((u/(2\pi)-\sqrt{-1})t)|$.
\par
As in the case of $I_{+,1}$ we need to calculate $g_N(g_N((u/(2\pi)-\sqrt{-1})t)+\varepsilon)$.
\par
From \eqref{eq:g_N} we have
\begin{equation*}
\begin{split}
  &g_{N}\left(\left(\frac{u}{2\pi}-\sqrt{-1}\right)t+\varepsilon\right)
  \\
  =&
  \exp
  \left[
    N\Phi\left(\left(\frac{u}{2\pi}-\sqrt{-1}\right)t+\varepsilon\right)
  \right]
  \\
  &
  \times
  \exp
  \left[
    I_{\gamma}
    \left(
      \pi-\sqrt{-1}u+\sqrt{-1}\frac{|\xi|^2t}{2\pi}+\sqrt{-1}\varepsilon\xi
    \right)
  \right.
  \\
  &\phantom{\times\exp\bigl[}
  \left.
    -
    I_{\gamma}
    \left(
      -\pi-\sqrt{-1}u-\sqrt{-1}\frac{|\xi|^2t}{2\pi}-\sqrt{-1}\varepsilon\xi
    \right)
  \right].
\end{split}
\end{equation*}
\par
From Lemma~\ref{lem:Lemma3}, there exist $A>0$ and $B>0$ such that $|I_{\gamma}(z)|\le 2A+B|\gamma|(1+e^{-\Im(z)R})$.
So we have
\begin{equation}\label{eq:I_{-,1}Phi}
\begin{split}
  &
  \left|
    g_{N}\left(\left(\frac{u}{2\pi}-\sqrt{-1}\right)t+\varepsilon\right)
  \right|
  \\
  \le&
  \exp
  \left[
    N\Re\Phi\left(\left(\frac{u}{2\pi}-\sqrt{-1}\right)t+\varepsilon\right)
  \right]
  \frac{\exp\left(2A+B|\gamma|(1+e^{(u-|\xi|^2t/(2\pi)-\varepsilon u)R}\right)}
       {\exp\left(2A+B|\gamma|(1+e^{(u+|\xi|^2t/(2\pi)+\varepsilon u)R}\right)}
  \\
  \le&
  \exp
  \left[
    N\Re\Phi\left(\left(\frac{ut}{2\pi}-\sqrt{-1}\right)t+\varepsilon\right)
  \right]
  \exp\left(2A+B|\gamma|(1+e^{(u-|\xi|^2/(2\pi)-\varepsilon u)R}\right)
\end{split}
\end{equation}
for $0\le t\le1$.
\par
Now we want to estimate $\Re\Phi\bigl((ut/(2\pi)-\sqrt{-1})t+\varepsilon\bigr)$.
\par
From the definition we have
\begin{multline*}
  \Phi\left(\left(\frac{u}{2\pi}-\sqrt{-1}\right)t+\varepsilon\right)
  \\
  =
  \frac{1}{\xi}
  \left(
    \Li_2(e^{u-|\xi|^2t/(2\pi)-\varepsilon\xi})
    -
    \Li_2(e^{u+|\xi|^2t/(2\pi)+\varepsilon\xi})
  \right)
  -\frac{u^2t}{2\pi}+\sqrt{-1}ut-\varepsilon u.
\end{multline*}
Since $\Re(u+|\xi|^2t/(2\pi)+\varepsilon\xi)=(1+\varepsilon)u+|\xi|^2t/(2\pi)>0$, there are two cases to consider; the case where $u-|\xi|^2t/(2\pi)<0$ and the case where $u-|\xi|^2t/(2\pi)\ge0$.
\begin{itemize}
\item The case where $u-|\xi|^2t/(2\pi)\le0$.
In this case we have $u-|\xi|^2t/(2\pi)-\varepsilon u<0$.
From \eqref{eq:dilog} we have
\begin{equation*}
\begin{split}
  &\Phi\left(\left(\frac{u}{2\pi}-\sqrt{-1}\right)t+\varepsilon\right)
  \\
  =&
  \frac{1}{\xi}
  \left(\vphantom{
          \left(u+\frac{|\xi|^2t}{2\pi}-\varepsilon\xi+\sqrt{-1}\pi\right)^2}
    \Li_2(e^{u-|\xi|^2t/(2\pi)-\varepsilon\xi})
    +
    \Li_2(e^{-u-|\xi|^2t/(2\pi)-\varepsilon\xi})
  \right.
  \\
  &
  \left.\quad
    +\frac{\pi^2}{6}
    +\frac{1}{2}
    \left(
      u+\frac{|\xi|^2t}{2\pi}+\varepsilon\xi-\sqrt{-1}\pi
    \right)^2
  \right)
  \\
  &-\frac{u^2t}{2\pi}+\sqrt{-1}ut-\varepsilon u
  \\
  =&
  \frac{1}{\xi}
  \left(
    \Li_2(e^{u-|\xi|^2t/(2\pi)-\xi\varepsilon})
    +
    \Li_2(e^{-u-|\xi|^2t/(2\pi)-\xi\varepsilon})
  \right)
  \\
  &+\frac{\pi^2}{6\xi}
  +\frac{1}{2\xi}
  \left(
    u+\frac{|\xi|^2t}{2\pi}+\varepsilon\xi-\sqrt{-1}\pi
  \right)^2
  \\
  &-\frac{u^2t}{2\pi}+\sqrt{-1}ut-\varepsilon u.
\end{split}
\end{equation*}
Therefore we have
\begin{equation*}
\begin{split}
  &\lim_{\varepsilon\searrow0}
  \Phi\left(\left(\frac{u}{2\pi}-\sqrt{-1}\right)t+\varepsilon\right)
  \\
  =&
  \frac{1}{\xi}
  \left(
    \Li_2(e^{u-|\xi|^2t/(2\pi)})
    +
    \Li_2(e^{-u-|\xi|^2t/(2\pi)})
  \right)
  \\
  &+\frac{\pi^2}{6\xi}
  +\frac{1}{2\xi}
  \left(
    u+\frac{|\xi|^2t}{2\pi}-\sqrt{-1}\pi
  \right)^2
  \\
  &-\frac{u^2t}{2\pi}+\sqrt{-1}ut
\end{split}
\end{equation*}
and so
\begin{equation*}
\begin{split}
  &\lim_{\varepsilon\searrow0}
  \Re\Phi\left(\left(\frac{u}{2\pi}-\sqrt{-1}\right)t+\varepsilon\right)
  \\
  =&
  \frac{u}{|\xi|^2}
  \left(
    \Li_2(e^{u-|\xi|^2t/(2\pi)})
    +
    \Li_2(e^{-u-|\xi|^2t/(2\pi)})
  \right)
  \\
  &+\frac{u\pi^2}{6|\xi|^2}
  +\frac{u}{2|\xi|^2}
  \left(
    u+\frac{|\xi|^2t}{2\pi}
  \right)^2
  -
  \frac{u\pi^2}{2|\xi|^2}
  -
  \frac{4\pi(u+|\xi|^2t/(2\pi))\pi}{2|\xi|^2}
  -\frac{u^2t}{2\pi}
  \\
  =&
  \frac{u}{|\xi|^2}
  \left(
    \Li_2(e^{u-|\xi|^2t/(2\pi)})
    +
    \Li_2(e^{-u-|\xi|^2t/(2\pi)})
  \right)
  \\
  &-\frac{7u\pi^2}{3|\xi|^2}
  +\frac{u^3}{2|\xi|^2}
  +\frac{u|\xi|^2t^2}{8\pi^2}
  -\pi t
  \\
  =&
  -2\frac{u\pi^2}{|\xi|^2}
  +\frac{u^3}{2|\xi|^2}
  +\frac{u|\xi|^2t^2}{8\pi^2}
  -\pi t
\end{split}
\end{equation*}
since $\Li_2(z)\le\pi^2/6$ for $0<z\le1$.
We can easily prove
\begin{equation*}
  -2\frac{u\pi^2}{|\xi|^2}
  +\frac{u^3}{2|\xi|^2}
  +\frac{u|\xi|^2t^2}{8\pi^2}
  -\pi t
  <
  \pi t
\end{equation*}
when $0<u<1$ and $2u\pi/|\xi|^2\le t\le1$.
So we have
\begin{equation*}
  \lim_{\varepsilon\searrow0}
  \Re\Phi\left(\left(\frac{u}{2\pi}-\sqrt{-1}\right)t+\varepsilon\right)
  <\pi t
\end{equation*}
in this case.
\item The case where $u-|\xi|^2t/(2\pi)>0$.
We choose $\varepsilon$ so that $u-|\xi|^2t/(2\pi)-\varepsilon u>0$.
In this case we have
\begin{equation*}
\begin{split}
  &\Phi\left(\left(\frac{u}{2\pi}-\sqrt{-1}\right)t+\varepsilon\right)
  \\
  =&
  \frac{1}{\xi}
  \left(
    \vphantom{\left(\frac{|\xi|^2t}{2\pi}\right)^2}
    -
    \Li_2(e^{-u+|\xi|^2t/(2\pi)+\varepsilon\xi})
    +
    \Li_2(e^{-u-|\xi|^2t/(2\pi)-\varepsilon\xi})
  \right.
  \\
  &\left.\quad
    -\frac{1}{2}
    \left(
      u-\frac{|\xi|^2t}{2\pi}-\varepsilon\xi+\pi\sqrt{-1}
    \right)^2
    +\frac{1}{2}
    \left(
      -u-\frac{|\xi|^2t}{2\pi}-\varepsilon\xi+\pi\sqrt{-1}
    \right)^2
  \right)
  \\
  &-
  \frac{u^2t}{2\pi}+\sqrt{-1}ut-\varepsilon u
\end{split}
\end{equation*}
and so we have
\begin{equation*}
\begin{split}
  &\lim_{\varepsilon\searrow0}
  \Re\Phi\left(\left(\frac{u}{2\pi}-\sqrt{-1}\right)t+\varepsilon\right)
  \\
  =&
  \frac{u}{|\xi|^2}
  \left(
    -
    \Li_2(e^{-u+|\xi|^2t/(2\pi)})
    +
    \Li_2(e^{-u-|\xi|^2t/(2\pi)})
  \right)
  +\frac{u^2t}{2\pi}-\frac{4u\pi^2}{|\xi|^2}
  \\
  <&
  \frac{u^2t}{2\pi}-\frac{23u\pi^2}{6|\xi|^2}
  <0
\end{split}
\end{equation*}
if $0<u<1$ and $0\le t<2u\pi/|\xi|^2$.
\end{itemize}
Therefore for any $0\le t\le1$ we have $\Re\Phi\bigl((u/(2\pi)-\sqrt{-1})t+\varepsilon\bigr)\le\pi t$ for small $\varepsilon>0$.
\par
So from \eqref{eq:I_{-,1}integral} and \eqref{eq:I_{-,1}Phi} we have
\begin{equation*}
\begin{split}
  \left|I_{-,1}(N)\right|
  &\le
  \frac{2}{1-e^{-\pi^2/u}}
  \exp\left(2A+B|\gamma|(1+e^{(u-|\xi|^2/(2\pi))R}\right)
  \int_{0}^{1}e^{-N\pi t}\,dt
  \\
  &=
  \frac{2(1-e^{-N\pi})}{N(1-e^{-\pi^2/u})}
  \exp\left(2A+B|\gamma|(1+e^{(u-|\xi|^2/(2\pi))R}\right)
  \\
  &<
  \frac{K_{-,1}}{N}
\end{split}
\end{equation*}
for a positive constant $K_{-,1}$.
\end{proof}
\begin{proof}[Proof of \eqref{eq:I_{+,3}}]
Since $\tan$ has period $\pi$ we have
\begin{equation}\label{eq:tan_I+3}
  |\tan(N\pi (-u/(2\pi)+\sqrt{-1})t)-\sqrt{-1}|
  <
  \frac{2e^{-2N\pi t}}{1-e^{-\pi^2/u}}
\end{equation}
from \eqref{eq:I_{+,1}tan}
\par
From \eqref{eq:tan_I+3} we have
\begin{equation}\label{eq:I_{+,3}integral}
  \left|I_{+,3}(N)\right|
  \le
  \frac{2}{1-e^{-\pi^2/u}}
  \int_{0}^{1}
  e^{-2N\pi t}
  \left|
    g_N\left(\left(-\frac{u}{2\pi}+\sqrt{-1}\right)t+1\right)
  \right|
  \,dt.
\end{equation}
\par
As in the case of $I_{+,1}(N)$ we consider the integral on the segment $(-u/(2\pi)+\sqrt{-1})t+1-\varepsilon$ ($0\le t\le1$) for small $\varepsilon$.
\par
We estimate $\Phi\bigl((-u/(2\pi)+\sqrt{-1})t+1-\varepsilon\bigr)$.
We have
\begin{equation*}
\begin{split}
  &\Phi\left(\left(-\frac{u}{2\pi}+\sqrt{-1}\right)t+1-\varepsilon\right)
  \\
  =&
  \frac{1}{\xi}
  \left(
    \Li_2(e^{u+|\xi|^2t/(2\pi)-(1-\varepsilon)\xi})
    -
    \Li_2(e^{u-|\xi|^2t/(2\pi)+(1-\varepsilon)\xi})
  \right)
  \\
  &+\frac{u^2t}{2\pi}-\sqrt{-1}ut-(1-\varepsilon)u.
\end{split}
\end{equation*}
Since we may assume that $\Re(u+|\xi|^2t/(2\pi)-(1-\varepsilon)\xi)=\varepsilon u+|\xi|^2t/(2\pi)>0$, there are two cases to consider; the case where $2u-|\xi|^2t/(2\pi)\le0$ and the case where $2u-|\xi|^2t/(2\pi)>0$.
\begin{itemize}
\item $2u-|\xi|^2t/(2\pi)\le0$.
In this case, since $u-|\xi|^2t/(2\pi)+(1-\varepsilon)u<0$, from \eqref{eq:dilog} we have
\begin{equation*}
\begin{split}
  &\Phi\left(\left(-\frac{u}{2\pi}+\sqrt{-1}\right)t+1-\varepsilon\right)
  \\
  =&
  \frac{1}{\xi}
  \left(
  \vphantom{
    \left(
      u+\frac{|\xi|^2t}{2\pi}-(1-\varepsilon)\xi+\pi\sqrt{-1}
    \right)^2}
    -
    \Li_2(e^{-u-|\xi|^2t/(2\pi)+(1-\varepsilon)\xi})
    -
    \Li_2(e^{u-|\xi|^2t/(2\pi)+(1-\varepsilon)\xi})
  \right.
  \\
  &
  \quad
  \left.
    -\frac{\pi^2}{6}
    -\frac{1}{2}
    \left(
      u+\frac{|\xi|^2t}{2\pi}-(1-\varepsilon)\xi+\pi\sqrt{-1}
    \right)^2
  \right)
  \\
  &+\frac{u^2t}{2\pi}-\sqrt{-1}ut-(1-\varepsilon)u
\end{split}
\end{equation*}
Since the dilogarithm function $\Li(z)$ is analytic when $\Re(z)<1$, we have
\begin{equation*}
\begin{split}
  &\lim_{\varepsilon\searrow0}
  \Phi\left(\left(-\frac{u}{2\pi}+\sqrt{-1}\right)t+1-\varepsilon\right)
  \\
  =&
  \frac{1}{\xi}
  \left(
    -
    \Li_2(e^{-|\xi|^2t/(2\pi)})
    -
    \Li_2(e^{2u-|\xi|^2t/(2\pi)})
  \right)
  \\
  &-\frac{\pi^2}{6\xi}
  -\frac{1}{2\xi}
  \left(
    \frac{|\xi|^2t}{2\pi}-\pi\sqrt{-1}
  \right)^2
  \\
  &+\frac{u^2t}{2\pi}-\sqrt{-1}ut-u.
\end{split}
\end{equation*}
So we have
\begin{equation*}
\begin{split}
  &\lim_{\varepsilon\searrow0}
  \Re\left(\left(-\frac{u}{2\pi}+\sqrt{-1}\right)t+1-\varepsilon\right)
  \\
  =&
  -\frac{u}{|\xi|^2}
  \left(
    \Re\Li_2(e^{-|\xi|^2t/(2\pi)})
    +
    \Re\Li_2(e^{2u-|\xi|^2t/(2\pi)})
  \right)
  \\
  &-\frac{u\pi^2}{6|\xi|^2}
  -\frac{u}{2|\xi|^2}
   \frac{|\xi|^4t^2}{4\pi^2}
  +
  \frac{u\pi^2}{2|\xi|^2}
  -
  \frac{4\pi|\xi|^2t/(2\pi)\pi}{2|\xi|^2}
  +
  \frac{u^2t}{2\pi}
  -u
  \\
  =&
  -\frac{u}{|\xi|^2}
  \left(
    \Re\Li_2(e^{-|\xi|^2t/(2\pi)})
    +
    \Re\Li_2(e^{2u-|\xi|^2t/(2\pi)})
  \right)
  \\
  &+\frac{u\pi^2}{3|\xi|^2}
  -\frac{u|\xi|^2t^2}{8\pi^2}
  -\pi t
  +\frac{u^2t}{2\pi}
  -u
  \le0.
\end{split}
\end{equation*}
The last inequality follows since $\frac{u\pi^2}{3|\xi|^2}
  -\frac{u|\xi|^2t^2}{8\pi^2}
  -\pi t
  +\frac{u^2t}{2\pi}
  -u$,
is a quadratic function with respect to $u$ with non-positive maximum.
\item $2u-|\xi|^2t/(2\pi)>0$.
In this case we may choose $\varepsilon$ small so that $u-|\xi|^2t/(2\pi)+(1-\varepsilon)u>0$.
Then we have
\begin{equation*}
\begin{split}
  &\Phi\left(\left(-\frac{u}{2\pi}+\sqrt{-1}\right)t+1-\varepsilon\right)
  \\
  =&
  \frac{1}{\xi}
  \left(
    \vphantom{\left(\frac{|\xi|^2t}{2\pi}\right)^2}
    -
    \Li_2(e^{-u-|\xi|^2t/(2\pi)+(1-\varepsilon)\xi})
    +
    \Li_2(e^{-u+|\xi|^2t/(2\pi)-(1-\varepsilon)\xi})
  \right.
  \\
  &\left.
    -\frac{1}{2}
    \left(
      u+\frac{|\xi|^2t}{2\pi}-(1-\varepsilon)\xi+\pi\sqrt{-1}
    \right)^2
    +\frac{1}{2}
    \left(
      u-\frac{|\xi|^2t}{2\pi}+(1-\varepsilon)\xi-\pi\sqrt{-1}
    \right)^2
  \right)
  \\
  &+
  \frac{u^2t}{2\pi}-\sqrt{-1}ut-(1-\varepsilon)u
\end{split}
\end{equation*}
and so we have
\begin{equation*}
\begin{split}
  &\lim_{\varepsilon\searrow0}
  \Re\Phi\left(\left(-\frac{u}{2\pi}+\sqrt{-1}\right)t+1-\varepsilon\right)
  \\
  =&
  \frac{u}{|\xi|^2}
  \left(
    -
    \Li_2(e^{-|\xi|^2t/(2\pi)})
    +
    \Li_2(e^{-2u+|\xi|^2t/(2\pi)})
  \right)
  +\frac{2u^3}{|\xi|^2}
  -\frac{u^2t}{2\pi}
  +\frac{4\pi^2u}{|\xi|^2}
  -u.
\end{split}
\end{equation*}
Putting $s:=|\xi|^2t/(2\pi)$ and consider the function
\begin{equation*}
\begin{split}
  f(u,s)
  &:=
  -
  \Li_2(e^{-s})
  +
  \Li_2(e^{-2u+s})
  +2u^2
  -us
  +4\pi^2
  -|\xi|^2
  -\frac{2\pi^2s}{u}
  \\
  &=
  -
  \Li_2(e^{-s})
  +
  \Li_2(e^{-2u+s})
  +u^2
  -us-\frac{2\pi^2s}{u}
\end{split}
\end{equation*}
so that
\begin{equation*}
  \frac{u}{|\xi|^2}f\left(u,\frac{|\xi|^2t}{2\pi}\right)
  =
  \lim_{\varepsilon\searrow0}
  \Re\Phi\left(\left(-\frac{u}{2\pi}+\sqrt{-1}\right)t+1-\varepsilon\right)
  -\pi t.
\end{equation*}
\end{itemize}
\par
We will show $f(u,s)\le0$ for $0\le s<2u\le2$.
\par
We have
\begin{equation*}
  \exp\left[-\frac{\partial\,f(u,t)}{\partial\,t}\right]
  =
  2e^{2\pi^2/u}
  \left(
    \cosh(u)-\cosh(s-u)
  \right).
\end{equation*}
Therefore it can be shown that for fixed $u$, $f(u,s)$ is increasing for $0\le s<u-\arccosh(\cosh(u)-\exp(-2\pi^2/u)/2)$, decreasing for $u-\arccosh(\cosh(u)-\exp(-2\pi^2/u)/2)<s<u+\arccosh(\cosh(u)-\exp(-2\pi^2/u)/2)$, and increasing for $u+\arccosh(\cosh(u)-\exp(-2\pi^2/u)/2)<s<u$.
Since a graph of $f\bigl(u,u-\arccosh(\cosh(u)-\exp(-2\pi^2/u)/2)\bigr)$ looks as Figure~\ref{fig:graph} and
\begin{equation*}
  f(u,2u)
  =
  -\frac{23\pi^2}{6}
  -u^2
  -\Li_2(e^{-2u})
  <0,
\end{equation*}
we see $f(u,s)\le0$.
\begin{figure}[h]
  \includegraphics[scale=0.7]{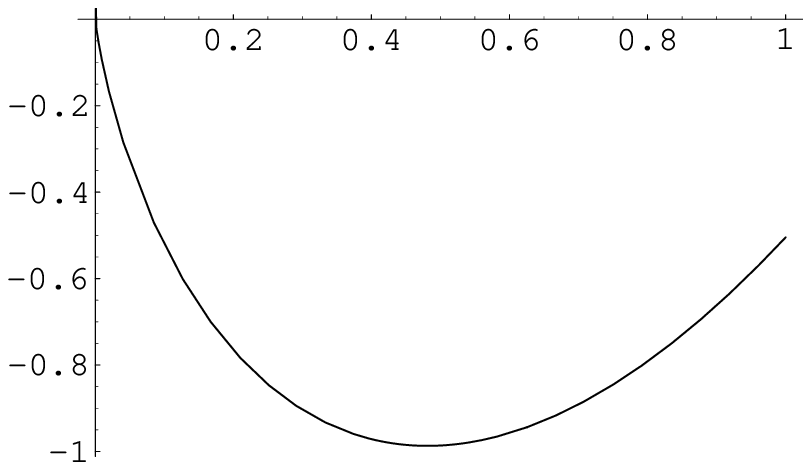}
  \caption{A graph of $f\bigl(u,u-\arccosh(\cosh(u)-\exp(-4\pi^2/u)/2)\bigr)$.}
  \label{fig:graph}
\end{figure}
\par
Therefore we finally have
\begin{equation*}
\begin{split}
  |I_{+,3}(N)|
  &<
  \frac{2}{1-e^{-\pi^2/u}}
  \exp\left(2A+B(1+e^{(u+|\xi|^2/(2\pi))R})|\gamma|\right)
  \int_{4\pi u/|\xi|^2}^{1}
  e^{-2N\pi t}\,dt
  \\
  &=
  \frac{e^{-8N\pi^2u/|\xi|^2}-e^{-2N\pi}}{N\pi(1-e^{-\pi^2/u})}
  \exp\left(2A+B(1+e^{(u+|\xi|^2/(2\pi))R})|\gamma|\right)
  \\
  &<
  \frac{K_{+,3}}{N}
\end{split}
\end{equation*}
for a positive constant $K_{+,3}$.
\end{proof}
\begin{proof}[Proof of \eqref{eq:I_{-,3}}]
Since $\tan$ has period $\pi$ we have
\begin{equation}\label{eq:tan_I-3}
  |\tan(N\pi((u/(2\pi)-\sqrt{-1})t+1)+\sqrt{-1}|
  <
  \frac{2e^{-2N\pi t}}{1-e^{-\pi^2/u}}
\end{equation}
from \eqref{eq:I_{+,1}tan}
\par
From \eqref{eq:tan_I-3} we have
\begin{equation}\label{eq:I_{-,3}integral}
  \left|I_{-,3}(N)\right|
  \le
  \frac{2}{1-e^{-\pi^2/u}}
  \int_{0}^{1}
  e^{-2N\pi t}
  \left|
    g_N\left(\left(-\frac{u}{2\pi}+\sqrt{-1}\right)t+1\right)
  \right|
  \,dt.
\end{equation}
\par
As in the case of $I_{+,1}(N)$ we consider the integral on the segment $(u/(2\pi)-\sqrt{-1})t+1-\varepsilon$ ($0\le t\le1$) for small $\varepsilon$.
\par
We estimate $\Phi\bigl((u/(2\pi)-\sqrt{-1})t+1-\varepsilon\bigr)$.
We have
\begin{equation*}
\begin{split}
  &\Phi\left(\left(\frac{u}{2\pi}-\sqrt{-1}\right)t+1-\varepsilon\right)
  \\
  =&
  \frac{1}{\xi}
  \left(
    \Li_2(e^{u-|\xi|^2t/(2\pi)-(1-\varepsilon)\xi})
    -
    \Li_2(e^{u+|\xi|^2t/(2\pi)+(1-\varepsilon)\xi})
  \right)
  \\
  &-\frac{u^2t}{2\pi}+\sqrt{-1}ut-(1-\varepsilon)u.
\end{split}
\end{equation*}
We will calculate $\lim_{\varepsilon\searrow0}\Phi\bigl((u/(2\pi)-\sqrt{-1})t+1-\varepsilon\bigr)$.
\par
Note that $\Re(u+|\xi|^2t/(2\pi)+(1-\varepsilon)\xi)=2u+|\xi|^2t/(2\pi)-\varepsilon u>0$ for small $\varepsilon$.
Since $\Re(u-|\xi|^2t/(2\pi)-(1-\varepsilon)\xi)=-|\xi|^2t/(2\pi)+\varepsilon u$, if $t>0$ we assume that $\Re(u-|\xi|^2t/(2\pi)-(1-\varepsilon)\xi)<0$.
Therefore we assume that $t>0$.
\par
In this case from \eqref{eq:dilog} we have
\begin{equation*}
\begin{split}
  &\Phi\left(\left(\frac{u}{2\pi}-\sqrt{-1}\right)t+1-\varepsilon\right)
  \\
  =&
  \frac{1}{\xi}
  \left(
    \Li_2(e^{-|\xi|^2t/(2\pi)+\varepsilon\xi})
    +
    \Li_2(e^{-2u-|\xi|^2t/(2\pi)+\varepsilon\xi})
  \right)
  \\
  &
  +\frac{\pi^2}{6\xi}
  +\frac{1}{2\xi}
  \left(
    2u+\frac{|\xi|^2t}{2\pi}-\varepsilon\xi+\pi\sqrt{-1}
  \right)^2
  \\
  &-\frac{u^2t}{2\pi}+\sqrt{-1}ut-(1-\varepsilon)u
\end{split}
\end{equation*}
and
\begin{equation*}
\begin{split}
  &\Re\Phi\left(\left(\frac{u}{2\pi}-\sqrt{-1}\right)t+1-\varepsilon\right)
  \\
  =&
  \frac{1}{\xi}
  \left(
    \Li_2(e^{-|\xi|^2t/(2\pi)+\varepsilon\xi})
    +
    \Li_2(e^{-2u-|\xi|^2t/(2\pi)+\varepsilon\xi})
  \right)
  \\
  &
  +\frac{\pi^2}{6\xi}
  +\frac{1}{2\xi}
  \left(
    2u+\frac{|\xi|^2t}{2\pi}-\varepsilon\xi+\pi\sqrt{-1}
  \right)^2
  \\
  &-\frac{u^2t}{2\pi}+\sqrt{-1}ut-(1-\varepsilon)u.
\end{split}
\end{equation*}
So we have
\begin{equation*}
\begin{split}
  &\lim_{\varepsilon\searrow0}
  \Phi\left(\left(\frac{u}{2\pi}-\sqrt{-1}\right)t+1-\varepsilon\right)
  \\
  =&
  \frac{1}{\xi}
  \left(
    \Li_2(e^{-|\xi|^2t/(2\pi)})
    +
    \Li_2(e^{-2u-|\xi|^2t/(2\pi)})
  \right)
  \\
  &
  +\frac{\pi^2}{6\xi}
  +\frac{1}{2\xi}
  \left(
    2u+\frac{|\xi|^2t}{2\pi}+\pi\sqrt{-1}
  \right)^2
  \\
  &-\frac{u^2t}{2\pi}+\sqrt{-1}ut-u
  \\
  =&
  \frac{u}{|\xi|^2}
  \left(
    \Li_2(e^{-|\xi|^2t/(2\pi)})
    +
    \Li_2(e^{-2u-|\xi|^2t/(2\pi)})
  \right)
  \\
  &+\frac{u\pi^2}{6|\xi|^2}
  +
  \frac{u}{2|\xi|^2}
  \left(
    2u+\frac{|\xi|^2t}{2\pi}
  \right)^2
  -
  \frac{u\pi^2}{2|\xi|^2}
  +
  \frac{4\pi^2}{2|\xi|^2}
  \left(
    2u+\frac{|\xi|^2t}{2\pi}
  \right)
  -
  \frac{u^2t}{2\pi}
  -u
  \\
  \le
  &\frac{11u\pi^2}{3|\xi|^2}
  +\frac{2u^3}{|\xi|^2}
  +\frac{u|\xi|^2t^2}{8\pi^2}
  +\pi t
  +\frac{u^2t}{2\pi}
  -u
  <\frac{3}{2}\pi t
\end{split}
\end{equation*}
if $0<u<1$ and $0<t\le1$.
\par
Therefore we finally have
\begin{equation*}
\begin{split}
  |I_{-,3}(N)|
  &<
  \frac{2}{1-e^{-\pi^2/u}}
  \exp\left(2A+B(1+e^{(u+|\xi|^2/(2\pi))R})|\gamma|\right)
  \int_{0}^{1}
  e^{-N\pi t/2}
  \,dt
  \\
  &=
  \frac{4(1-e^{-N\pi/2})}{N\pi(1-e^{-\pi^2/u)}}
  \exp\left(2A+B(1+e^{(u+|\xi|^2/(2\pi))R})|\gamma|\right)
  \\
  &<
  \frac{K_{-,3}}{N}
\end{split}
\end{equation*}
for a positive constant $K_{-,3}$.
\end{proof}
\begin{proof}[Proof of \eqref{eq:I_{+,2}}]
From \cite[Equation~(4.6)]{Andersen/Hansen:JKNOT2006} we have
\begin{equation*}
  |I_{+,2}(N)|
  \le
  4e^{-2\pi N}
  \int_{\varepsilon}^{1-\varepsilon}|g_N(-u/(2\pi)+\sqrt{-1}+t)|\,dt.
\end{equation*}
From \eqref{eq:g_N} we have
\begin{multline*}
  |g_N(-u/(2\pi)+\sqrt{-1}+t)|
  \\
  \le
  \exp\bigl(N\Re(\Phi(-u/(2\pi)+\sqrt{-1}+t))\bigr)
  \exp\bigl(4A+2|\gamma|(1+e^{R(u+u^2/(2\pi)+2\pi)})\bigr).
\end{multline*}
\par
Now estimate $\Re(\Phi(-u/(2\pi)+\sqrt{-1}+t))$.
We have
\begin{equation*}
\begin{split}
  &\Phi\left(-\frac{u}{2\pi}+\sqrt{-1}+t\right)
  \\
  =&
  \frac{1}{\xi}
  \left(
    \Li_2(e^{u+|\xi|^2/(2\pi)-\xi t})
    -
    \Li_2(e^{u-|\xi|^2/(2\pi)+\xi t})
  \right)
  +\frac{u^2}{2\pi}-\sqrt{-1}u-ut.
\end{split}
\end{equation*}
Since $\Re(u+|\xi|^2/(2\pi)-\xi t)=u(1-t)+|\xi|^2/(2\pi)>0$ and $\Re(u-|\xi|^2/(2\pi)+\xi t)=u(1+t)-|\xi|^2/(2\pi)\le0$, we have
\begin{equation*}
\begin{split}
  &\Phi\left(-\frac{u}{2\pi}+\sqrt{-1}+t\right)
  \\
  =&
  -\frac{1}{\xi}
  \left(
    \Li_2(e^{u+|\xi|^2/(2\pi)-\xi t})
    +
    \Li_2(e^{u-|\xi|^2/(2\pi)+\xi t})
  \right)
  \\
  &
  -\frac{\pi^2}{6\xi}
  -\frac{1}{2\xi}
  \left(
    u+\frac{|\xi|^2}{2\pi}-(u+2\pi\sqrt{-1})t+\pi\sqrt{-1}
  \right)^2
  +\frac{u^2}{2\pi}-\sqrt{-1}u-ut
\end{split}
\end{equation*}
from \eqref{eq:dilog}.
Therefore we have
\begin{equation*}
\begin{split}
  &\Re\Phi\left(-\frac{u}{2\pi}+\sqrt{-1}+t\right)
  \\
  =&
  -\frac{u}{|\xi|^2}
  \left(
    \Re\Li_2(e^{u+|\xi|^2/(2\pi)-\xi t})
    +
    \Re\Li_2(e^{u-|\xi|^2/(2\pi)+\xi t})
  \right)
  \\
  &-\frac{2\pi}{|\xi|^2}
  \left(
    \Im\Li_2(e^{u+|\xi|^2/(2\pi)-\xi t})
    +
    \Im\Li_2(e^{u-|\xi|^2/(2\pi)+\xi t})
  \right)
  \\
  &
  +(2t-1)\pi
  -\frac{1}{2}(t^2+1)u
  +\frac{u^2t}{2\pi}
  -\frac{u^3}{8\pi^2}
  +\frac{\pi^2u}{3|\xi|^2}
  \\
  \le&
  -\frac{u}{|\xi|^2}
  \left(
    \Re\Li_2(e^{u+|\xi|^2/(2\pi)-\xi t})
    +
    \Re\Li_2(e^{u-|\xi|^2/(2\pi)+\xi t})
  \right)
  \\
  &-\frac{2\pi}{|\xi|^2}
  \left(
    \Im\Li_2(e^{u+|\xi|^2/(2\pi)-\xi t})
    +
    \Im\Li_2(e^{u-|\xi|^2/(2\pi)+\xi t})
  \right)
  \\
  &+(2t-1)\pi.
\end{split}
\end{equation*}
For $0<r<1$ and $0<\theta<2\pi$ we have
\begin{equation*}
\begin{split}
  \Re\Li_2(re^{\sqrt{-1}\theta})
  &=
  -\frac{1}{2}
  \int_{0}^{r}
  \frac{\log(1-2s\cos\theta+s^2)}{s}\,ds
  \ge
  -\frac{1}{2}
  \int_{0}^{r}
  \frac{\log(1+2s+s^2)}{s}\,ds
  \\
  &=
  -\int_{0}^{r}\frac{\log(1+s)}{s}\,ds
  \ge
  -\int_{0}^{1}\frac{\log(1+s)}{s}\,ds
  =
  \Li_2(-1)
  =
  -\frac{\pi^2}{12}.
\end{split}
\end{equation*}
We also have
\begin{equation*}
  \Im\Li_2(re^{\sqrt{-1}\theta})
  =
  -\int_{0}^{r}
  \frac{\arg(1-se^{\sqrt{-1}\theta})}{s}\,ds.
\end{equation*}
If $0<\theta\le\pi$ the right hand side is non-negative.
If $\pi<\theta<2\pi$ we have
\begin{equation*}
\begin{split}
  \Im\Li_2(re^{\sqrt{-1}\theta})
  &=
  -\int_{0}^{1}\frac{\arg(1-se^{\sqrt{-1}\theta})}{s}\,ds
  +
  \int_{r}^{1}
  \frac{\arg(1-se^{\sqrt{-1}\theta})}{s}\,ds
  \\
  &=
  \Im\Li_2(e^{\sqrt{-1}\theta})
  +
  \int_{r}^{1}
  \frac{\arg(1-se^{\sqrt{-1}\theta})}{s}\,ds.
\end{split}
\end{equation*}
The second integral is positive and $\Im\Li_2(e^{\sqrt{-1}\theta})$ is bigger than or equal to $-\Im\Li_2(\exp(\sqrt{-1}\pi/3))=-1.01494\ldots$.
\par
Therefore we have
\begin{equation*}
  \Re\Phi\left(-\frac{u}{2\pi}+\sqrt{-1}+t\right)
  \le
  \frac{\pi^2u}{6|\xi|^2}
  +
  \frac{2\pi\Im\Li_2(e^{\sqrt{-1}\pi/3})}{|\xi|^2}
  +(2t-1)\pi
\end{equation*}
So we have
\begin{equation*}
\begin{split}
  &|I_{+,2}(N)|
  \\
  \le&
  4\exp(4A+2|\gamma|(1+e^{R(u+u^2/(2\pi)+2\pi)}))
  e^{-3\pi N}
  \\
  &\times
  \exp
  \left[
  N
  \left(
    \frac{\pi^2u}{6|\xi|^2}
    +
    \frac{2\pi\Im\Li_2(e^{\sqrt{-1}\pi/3})}{|\xi|^2}
  \right)
  \right]
  \int_{0}^{1}e^{2N\pi t}\,dt
  \\
  =&
  \frac{2(1-e^{-2\pi N})}{\pi N}
  \exp(4A+2|\gamma|(1+e^{R(u+u^2/(2\pi)+2\pi)}))
  \\
  &\times
  \exp
  \left[
  \pi N
  \left(
    \frac{\pi u}{6|\xi|^2}
    +
    \frac{2\Im\Li_2(e^{\sqrt{-1}\pi/3})}{|\xi|^2}
    -1
  \right)
  \right]
  \\
  <&
  \frac{2}{\pi N}
  \exp(4A+2|\gamma|(1+e^{R(u+u^2/(2\pi)+2\pi)}))
  \\
  <&
  \frac{K_{+,2}}{N}
\end{split}
\end{equation*}
for a positive constant $K_{+,2}$.
Here we use the inequality
\begin{equation*}
  \frac{\pi u}{6|\xi|^2}
  +
  \frac{2\Im\Li_2(e^{\sqrt{-1}\pi/3})}{|\xi|^2}
  -1
  <
  \frac{\pi}{6}
  +\frac{1.01494\ldots}{2\pi^2}
  -1
  <0.
\end{equation*}
\end{proof}
\begin{proof}[Proof of \eqref{eq:I_{-,2}}]
From \cite[Equation~(4.6)]{Andersen/Hansen:JKNOT2006} we have
\begin{equation*}
  |I_{-,2}(N)|
  \le
  4e^{-2\pi N}
  \int_{\varepsilon}^{1-\varepsilon}|g_N(u/(2\pi)-\sqrt{-1}+t)|\,dt.
\end{equation*}
From \eqref{eq:g_N} we have
\begin{multline*}
  |g_N(u/(2\pi)-\sqrt{-1}+t)|
  \\
  \le
  \exp\bigl(N\Re(\Phi(u/(2\pi)-\sqrt{-1}+t))\bigr)
  \exp\bigl(4A+2|\gamma|(1+e^{R(u+u^2/(2\pi)+2\pi)})\bigr).
\end{multline*}
\par
Now estimate $\Re(\Phi(u/(2\pi)-\sqrt{-1}+t))$.
We have
\begin{equation*}
\begin{split}
  &\Phi\left(\frac{u}{2\pi}-\sqrt{-1}+t\right)
  \\
  =&
  \frac{1}{\xi}
  \left(
    \Li_2(e^{u-|\xi|^2/(2\pi)-\xi t})
    -
    \Li_2(e^{u+|\xi|^2/(2\pi)+\xi t})
  \right)
  -\frac{u^2}{2\pi}+\sqrt{-1}u-ut.
\end{split}
\end{equation*}
Since $\Re(u-|\xi|^2/(2\pi)-\xi t)=u(1-t)-|\xi|^2/(2\pi)<0$ and $\Re(u+|\xi|^2/(2\pi)+\xi t)=u(1+t)+|\xi|^2/(2\pi)>0$, we have
\begin{equation*}
\begin{split}
  &\Phi\left(\frac{u}{2\pi}-\sqrt{-1}+t\right)
  \\
  =&
  \frac{1}{\xi}
  \left(
    \Li_2(e^{u-|\xi|^2/(2\pi)-\xi t})
    +
    \Li_2(e^{-u-|\xi|^2/(2\pi)-\xi t})
  \right)
  \\
  &
  +\frac{\pi^2}{6\xi}
  +\frac{1}{2\xi}
  \left(
    u+\frac{|\xi|^2}{2\pi}+(u+2\pi\sqrt{-1})t-\pi\sqrt{-1}
  \right)^2
  -\frac{u^2}{2\pi}+\sqrt{-1}u-ut
\end{split}
\end{equation*}
from \eqref{eq:dilog}.
Therefore we have
\begin{equation*}
\begin{split}
  &\Re\Phi\left(\frac{u}{2\pi}-\sqrt{-1}+t\right)
  \\
  =&
  \frac{u}{|\xi|^2}
  \left(
    \Re\Li_2(e^{u-|\xi|^2/(2\pi)-\xi t})
    +
    \Re\Li_2(e^{-u-|\xi|^2/(2\pi)-\xi t})
  \right)
  \\
  &
  +
  \frac{2\pi}{|\xi|^2}
  \left(
    \Im\Li_2(e^{u-|\xi|^2/(2\pi)-\xi t})
    +
    \Im\Li_2(e^{-u-|\xi|^2/(2\pi)-\xi t})
  \right)
  \\
  &
  +\frac{u\pi^2}{6|\xi|^2}-\frac{u^2}{2\pi}-ut
  \\
  &
  +\frac{u}{2|\xi|^2}
  \left(
    \left(
    u+\frac{|\xi|^2}{2\pi}+ut
    \right)^2
    -
    \pi^2(2t-1)^2
  \right)
  \\
  &
  +
  \frac{4\pi}{2|\xi|^2}
  \pi(2t-1)
  \left(
    u+\frac{|\xi|^2}{2\pi}+ut
  \right)
  \\
  =&
  \frac{u}{|\xi|^2}
  \left(
    \Re\Li_2(e^{u-|\xi|^2/(2\pi)-\xi t})
    +
    \Re\Li_2(e^{-u-|\xi|^2/(2\pi)-\xi t})
  \right)
  \\
  &
  +
  \frac{2\pi}{|\xi|^2}
  \left(
    \Im\Li_2(e^{u-|\xi|^2/(2\pi)-\xi t})
    +
    \Im\Li_2(e^{-u-|\xi|^2/(2\pi)-\xi t})
  \right)
  \\
  &
  +\frac{ut^2}{2}
  +\frac{|\xi|^2t}{2\pi}
  -\frac{7u\pi^2}{3|\xi|^2}
  +\frac{u^3}{2|\xi|^2}
  +\frac{u|\xi|^2}{8\pi^2}
  -\pi.
\end{split}
\end{equation*}
For $0<r<1$ and $0<\theta<2\pi$ we have
\begin{equation*}
\begin{split}
  \Re\Li_2(re^{\sqrt{-1}\theta})
  &=
  -\frac{1}{2}
  \int_{0}^{r}
  \frac{\log(1-2s\cos\theta+s^2)}{s}\,ds
  \le
  -\frac{1}{2}
  \int_{0}^{r}
  \frac{\log(1-2s+s^2)}{s}\,ds
  \\
  &=
  -\int_{0}^{r}\frac{\log(1-s)}{s}\,ds
  \le
  -\int_{0}^{1}\frac{\log(1-s)}{s}\,ds
  =
  \Li_2(1)
  =
  \frac{\pi^2}{6}.
\end{split}
\end{equation*}
We also have
\begin{equation*}
\begin{split}
  \Im\Li_2(re^{\sqrt{-1}\theta})
  &=
  -\int_{0}^{r}
  \frac{\arg(1-se^{\sqrt{-1}\theta})}{s}\,ds
  \\
  &=
  -\int_{0}^{1}\frac{\arg(1-se^{\sqrt{-1}\theta})}{s}\,ds
  +
  \int_{r}^{1}
  \frac{\arg(1-se^{\sqrt{-1}\theta})}{s}\,ds
  \\
  &\le
  \Im\Li_2(e^{\sqrt{-1}\theta})
  \le
  \Im\Li_2(e^{\sqrt{-1}\pi/3})
  =
  1.01494\dots.
\end{split}
\end{equation*}
when $0\le\theta\le\pi$.
Since $\Im\Li_2(re^{\sqrt{-1}\theta})=-\Im\Li_2(re^{\sqrt{-1}(\theta-\pi)})$ when $\pi<\theta\le2\pi$, we have
\begin{equation*}
\begin{split}
  &\Re\Phi\left(\frac{u}{2\pi}-\sqrt{-1}+t\right)
  \\
  =&
  \frac{4\pi}{|\xi|^2}\Im\Li_2(e^{\sqrt{-1}\pi/3})
  +\frac{ut^2}{2}
  +\frac{|\xi|^2t}{2\pi}
  -2\frac{u\pi^2}{|\xi|^2}
  +\frac{u^3}{2|\xi|^2}
  +\frac{u|\xi|^2}{8\pi^2}
  -\pi
  \\
  &\quad\text{(since $0<t<1$ and $0<u<\pi/2$)}.
  \\
  <&
  \frac{|\xi|^2t}{2\pi}
  +\frac{1}{\pi}\Im\Li_2(e^{\sqrt{-1}\pi/3})
  +\frac{\pi}{2}
  -\frac{4\pi}{17}
  +\frac{\pi}{64}
  +\frac{17\pi}{64}
  -\pi
  \\
  \le&
  \frac{|\xi|^2t}{2\pi}
  +
  \frac{1}{\pi}\Im\Li_2(e^{\sqrt{-1}\pi/3})
  -\frac{247}{544}\pi.
\end{split}
\end{equation*}
when $u<\pi/2$.
\par
So we have
\begin{equation*}
\begin{split}
  &|I_{-,2}(N)|
  \\
  <&
  4e^{(\Im\Li_2(e^{\sqrt{-1}\pi/3})/\pi-247\pi/544-2\pi)N}
  \exp\bigl(4A+2|\gamma|(1+e^{R(u+u^2/(2\pi)+2\pi)})\bigr)
  \\
  &\times
  \lim_{\varepsilon\searrow0}
  \int_{\varepsilon}^{1-\varepsilon}e^{N|\xi|^2t/(2\pi)}\,dt
  \\
  =&
  4e^{(\Im\Li_2(e^{\sqrt{-1}\pi/3})/\pi-247\pi/544-2\pi)N}
  \exp\bigl(4A+2|\gamma|(1+e^{R(u+u^2/(2\pi)+2\pi)})\bigr)
  \\
  &\times
  \frac{2\pi}{N|\xi|^2}(e^{N|\xi|^2/(2\pi)}-1)
  \\
  =&
  \frac{8\pi\exp\bigl(4A+2|\gamma|(1+e^{R(u+u^2/(2\pi)+2\pi)})\bigr)}{N|\xi|^2}
  \\
  &\times
  \left(
    e^{(\Im\Li_2(e^{\sqrt{-1}\pi/3})/\pi-247\pi/544-2\pi+|\xi|^2/(2\pi))N}
  \right.
  \\
  &\phantom{\times}\qquad
  \left.
    -
    e^{(\Im\Li_2(e^{\sqrt{-1}\pi/3})/\pi-247\pi/544-2\pi)N}
  \right)
  \\
  <&
  \frac{8\pi\exp\bigl(4A+2|\gamma|(1+e^{R(u+u^2/(2\pi)+2\pi)})\bigr)}{N|\xi|^2}
  \\
  &\times
  \left(
    e^{(\Im\Li_2(e^{\sqrt{-1}\pi/3})/\pi-179\pi/544)N}
    -
    e^{(\Im\Li_2(e^{\sqrt{-1}\pi/3})/\pi-247\pi/544-2\pi)N}
  \right)
  \\
  <&
  \frac{8\pi\exp\bigl(4A+2|\gamma|(1+e^{R(u+u^2/(2\pi)+2\pi)})\bigr)}{N|\xi|^2}
  \\
  <&
  \frac{K_{-,2}}{N}
\end{split}
\end{equation*}
for a positive constant $K_{-,2}$, since $\Im\Li_2(e^{\sqrt{-1}\pi/3})/\pi-179\pi/544=-0.710657\dots$.
\end{proof}
\section{Proof of Proposition~\ref{prop:4.9}}\label{sec:prop:4.9}
In this section we again follow \cite{{Andersen/Hansen:JKNOT2006}} to prove Proposition~\ref{prop:4.9}.
From \eqref{eq:g_N} we have
\begin{equation*}
  \int_{p(\varepsilon)}g_N(w)\,dw
  =
  \int_{p(\varepsilon)}\exp\bigl(N\Phi(w)\bigr)\,dw
  +
  \int_{p(\varepsilon)}\exp\bigl(N\Phi(w)\bigr)h_{\gamma}(w)\,dw
\end{equation*}
for any path in the parallelogram bounded by $C_+(\varepsilon)\cup C_-(\varepsilon)$ connecting $\varepsilon$ and $1-\varepsilon$, where
\begin{equation*}
  h_{\gamma}(w)
  :=
  \exp
  \left(
    I_{\gamma}(\pi-\sqrt{-1}u+\sqrt{-1}\xi w)
    -
    I_{\gamma}(-\pi-\sqrt{-1}u-\sqrt{-1}\xi w)
  \right)
  -1.
\end{equation*}
Note that $h_{\gamma}(w)$ is defined for $w$ with $0<\Im(\xi w)<2\pi$, that is, for $w$ between the two parallel thick lines depicted in Figure~\ref{fig:contour}.
\par
Then we have
\begin{equation}\label{eq:max}
\begin{split}
  &\left|
    \int_{p(\varepsilon)}g_N(w)\,dw
    -
    \int_{p(\varepsilon)}\exp\bigl(N\Phi(w)\bigr)\,dw
  \right|
  \\
  \le&
  \int_{p(\varepsilon)}\left|\exp\big(N\Phi(w)\bigr)h_{\gamma}(w)\right|\,dw
  \\
  \le&
  \max_{w\in p(\varepsilon)}\left\{\exp\bigl(N\Re\Phi(w)\bigr)\right\}
  \int_{p(\varepsilon)}|h_{\gamma}(w)|\,dw
  \\
  =&
  \max_{w\in p(\varepsilon)}\left\{\exp\bigl(N\Re\Phi(w)\bigr)\right\}
  \int_{\varepsilon}^{1-\varepsilon}|h_{\gamma}(w)|\,dw
  \\
  \le&
  \max_{w\in p(\varepsilon)}\left\{\exp\bigl(N\Re\Phi(w)\bigr)\right\}
  \int_{0}^{1}|h_{\gamma}(w)|\,dw.
\end{split}
\end{equation}
Here we use the analyticity of $h_{\gamma}$ in the equality.
\par
By the definition of $h_{\gamma}$ we have
\begin{equation}\label{eq:h_gamma}
  h_{\gamma}(t)
  =
  \sum_{n=1}^{\infty}
  \frac{1}{n!}
  \left(
    I_{\gamma}(\pi-\sqrt{-1}u+\sqrt{-1}\xi t)
    -
    I_{\gamma}(-\pi-\sqrt{-1}u-\sqrt{-1}\xi t)
  \right)^n.
\end{equation}
From Lemma~\ref{lem:Lemma3}, for $0<t<1$ we have
\begin{equation*}
  \left|
    I_{\gamma}(\pi-\sqrt{-1}u+\sqrt{-1}\xi t)
  \right|
  \le
  A|\gamma|
  \left(
    \frac{1}{2\pi t}+\frac{1}{2\pi-2\pi t}
  \right)
  +
  B|\gamma|
  \left(
    1+e^{(u-ut)R}
  \right)
\end{equation*}
and
\begin{equation*}
  \left|
    I_{\gamma}(-\pi-\sqrt{-1}u-\sqrt{-1}\xi w)
  \right|
  \le
  A|\gamma|
  \left(
    \frac{1}{2\pi-2\pi t}+\frac{1}{2\pi t}
  \right)
  +
  B|\gamma|
  \left(
    1+e^{(u+ut)R}
  \right),
\end{equation*}
and so we have
\begin{equation*}
\begin{split}
  &\left|
    I_{\gamma}(\pi-\sqrt{-1}u+2\pi t)
    -
    I_{\gamma}(-\pi-\sqrt{-1}u-2\pi t)
  \right|
  \\
  \le&
  \frac{A|\gamma|}{\pi}
  \left(
    \frac{1}{1-t}+\frac{1}{t}
  \right)
  +
  B|\gamma|
  \left(
    2+e^{(1+t)uR}+e^{(1-t)uR}
  \right)
  \\
  \le&
  |\gamma|\left(A'f(t)+B'\right)
\end{split}
\end{equation*}
for some positive constants $A'$ and $B'$, where we put $f(t):=1/t+1/(1-t)$.
Since $f(t)\ge4$ for $0<t<1$ we have
\begin{equation}\label{eq:I_gamma}
\begin{split}
  &\left|
    I_{\gamma}(\pi-\sqrt{-1}u+\sqrt{-1}\xi w)
    -
    I_{\gamma}(-\pi-\sqrt{-1}u-\sqrt{-1}\xi w)
  \right|
  \\
  \le&
  |\gamma|\left(A'f(t)+B'\frac{f(t)}{4}\right)
  \\
  =&
  A''|\gamma|f(t),
\end{split}
\end{equation}
where $A'':=A'+B'/4$.
\par
From the argument in \cite[P.~537]{Andersen/Hansen:JKNOT2006} we have
\begin{equation*}
  \int_{|\gamma|}^{1-|\gamma|}f(t)^n\,dt
  \le
  2^{2n+1}\int_{|\gamma|}^{1/2}\frac{dt}{t^n}
\end{equation*}
for $n\ge1$.
Since $|\gamma|=|\xi|/(2N)$ we have
\begin{equation*}
  \int_{|\gamma|}^{1/2}\frac{dt}{t}
  =
  \log{N}-\log|\xi|
  \le
  \log{N}
\end{equation*}
and
\begin{equation*}
  \int_{|\gamma|}^{1/2}\frac{dt}{t^n}
  =
  \frac{1}{n-1}
  \left(
    \frac{1}{|\gamma|^{n-1}}-2^{n-1}
  \right)
  \le
  \frac{1}{(n-1)|\gamma|^{n-1}}.
\end{equation*}
for $n\ge2$.
\par
Therefore from \eqref{eq:h_gamma} and \eqref{eq:I_gamma} we have
\begin{equation*}
\begin{split}
  \int_{|\gamma|}^{1-|\gamma|}|h_{\gamma}(t)|\,dt
  &\le
  \sum_{n=1}^{\infty}
  \frac{1}{n!}(A'')^n|\gamma|^n
  \int_{|\gamma|}^{1-|\gamma|}f(t)^n\,dt
  \\
  &\le
  2|\gamma|
  \left(
    4A''\log{N}
    +
    \sum_{n=2}^{\infty}
    \frac{(4A'')^n}{(n-1)n!}
  \right)
  \\
  &\le
  \frac{|\xi|}{N}
  \left(
    4A''\log{N}
    +
    \exp(4A'')-4A''-1
  \right)
  \\
  &\le
  \frac{K'\log{N}}{N}
\end{split}
\end{equation*}
for a positive constant $K'$ if $N$ is sufficiently large.
\par
For $0\le t\le1$, we also have from Lemma~\ref{lem:Lemma3}
\begin{equation*}
\begin{split}
  &\left|
    I_{\gamma}(\pi-\sqrt{-1}u+2\pi t)
    -
    I_{\gamma}(-\pi-\sqrt{-1}u-2\pi t)
  \right|
  \\
  \le&
  4A
  +
  B|\gamma|
  \left(
    2+e^{(1+t)uR}+e^{(1-t)uR}
  \right).
\end{split}
\end{equation*}
Since $|\gamma|=|\xi|/(2N)$, we have
\begin{equation*}
  |h_{\gamma}(t)|
  \le
  \exp
  \left(
    4A
    +
    \frac{B'|\xi|}{2N}
  \right).
\end{equation*}
So we have
\begin{equation*}
  \int_{0}^{|\gamma|}|h_{\gamma}(t)|\,dt
  \le
  \frac{|\xi|}{2N}
  \exp
  \left(
    4A
    +
    \frac{B'|\xi|}{2N}
  \right)
  \le
  \frac{K''}{N}
\end{equation*}
and
\begin{equation*}
  \int_{1-|\gamma|}^{1}|h_{\gamma}(t)|\,dt
  \le
  \frac{|\xi|}{2N}
  \exp
  \left(
    4A
    +
    \frac{B'|\xi|}{2N}
  \right)
  \le
  \frac{K'''}{N}
\end{equation*}
for positive constants $K''$ and $K'''$.
\par
Therefore we have
\begin{equation*}
  \int_{0}^{1}|h_{\gamma}(t)|\,dt
  \le
  \frac{K'\log{N}}{N}+\frac{K''}{N}+\frac{K'''}{N}
  \le
  \frac{K_2\log{N}}{N}
\end{equation*}
for a positive constant $K_2$.
Now from \eqref{eq:max} we finally have
\begin{equation*}
  \left|
    \int_{p(\varepsilon)}g_N(w)\,dw
    -
    \int_{p(\varepsilon)}\exp\bigl(N\Phi(w)\bigr)\,dw
  \right|
  \le
  \frac{K_2\log{N}}{N}
  \max_{w\in p(\varepsilon)}\left\{\exp\bigl(N\Re\Phi(w)\bigr)\right\},
\end{equation*}
proving Proposition~\ref{prop:4.9}.
\section{Proof of Lemma~\ref{lem:Phi_0}}
\label{sec:Phi_0}
We will show that $\xi\Phi(w_0)$ is purely imaginary with positive imaginary part.
Then since $\xi$ is in the first quadrant, $\Phi(w_0)$ is in also in the first quadrant and so $\Re\Phi(w_0)>0$.
\par
Since $\varphi(u)$ is purely imaginary (Remark~\ref{rem:varphi}), we have $\Li_2(e^{u-\varphi(u)})=\overline{\Li_2(e^{u+\varphi(u)})}$.
Therefore from \eqref{eq:Phi_0} and \eqref{eq:dilog} we have
\begin{equation*}
\begin{split}
  \Im(\xi\Phi(w_0))
  &=
  -2\Im\Li_2(e^{u+\varphi(u)})-u\Im\tilde{\varphi}(u)
  \\
  &=
  2\Im\Li_2(e^{-u-\varphi(u)})
  +\Im\bigl(u+\varphi(u)+\pi\sqrt{-1}\bigr)^2
  -u\Im\tilde{\varphi}(u)
  \\
  &=
  2\Im\Li_2(e^{-u-\varphi(u)})
  +u\varphi(u).
\end{split}
\end{equation*}
So we have
\begin{equation*}
\begin{split}
  &\frac{d\,\Im(\xi\Phi(w_0))}{d\,u}
  \\
  =&
  2\Im
  \left(
    \log(1-e^{-u-\varphi(u)})
    \left(
      1
      +
      \sqrt{-1}\frac{d\,\Im\varphi(u)}{d\,u}
    \right)
  \right)
  +\Im\varphi(u)
  +u\frac{d\,\Im\varphi(u)}{d\,u}
  \\
  =&
  \frac{d\,\Im\varphi(u)}{d\,u}
  \left(
    2\log|1-e^{-u-\varphi(u)}|+u
  \right)
  +2\arg(1-e^{-u-\varphi(u)})
  +\Im\varphi(u)
  \\
  =&
  \frac{d\,\Im\varphi(u)}{d\,u}
  \log\Bigl((1-e^{-u-\varphi(u)})(1-e^{-u+\varphi(u)})e^u\Bigr)
  +2\arg(1-e^{-u-\varphi(u)})
  +\Im\varphi(u)
  \\
  =&
  \frac{d\,\Im\varphi(u)}{d\,u}
  \log(e^u+e^{-u}-e^{\varphi(u)}-e^{-\varphi(u)})
  +2\arg(1-e^{-u-\varphi(u)})
  +\Im\varphi(u)
  \\
  =&
  2\arg(1-e^{-u-\varphi(u)})
  +\Im\varphi(u)
  <0
\end{split}
\end{equation*}
since $-\pi/3<\Im\varphi(u)<0$.
\par
Since $\varphi(0)=-\pi\sqrt{-1}/3$ and $\varphi\Bigl(\log\bigl((3+\sqrt{5})/2\bigr)\Bigr)=0$, we have
\begin{equation*}
  \xi\Phi(w_0)
  =
  \begin{cases}
    2\Im\Li_2\left(e^{\pi\sqrt{-1}/3}\right)=1.01494\ldots
    &\quad\text{($u=0$)},
    \\
    2\Im\Li_2\left(\frac{2}{3+\sqrt{5}}\right)=0
    &\quad\text{($u=\log\bigl((3+\sqrt{5})/2\bigr)$)}.
  \end{cases}
\end{equation*}
Therefore $\Im(\xi\Phi(w_0))>0$ for $0<u<\log\bigl((3+\sqrt{5})/2\bigr)$, completing the proof of Lemma~\ref{lem:Phi_0}.
\bibliography{mrabbrev,hitoshi}
\bibliographystyle{amsplain}
\end{document}